\newcommand\blfootnote[1]{%
  \begingroup
  \renewcommand\thefootnote{}\footnote{#1}%
  \addtocounter{footnote}{-1}%
  \endgroup
}
\newtheorem{Thm}{Theorem}[section]
\newtheorem{Lem}[Thm]{Lemma}
\newtheorem{Prop}[Thm]{Proposition}
\newtheorem{Cor}[Thm]{Corollary}
\theoremstyle{definition}
\newtheorem{Rem}[Thm]{Remark}
\newtheorem*{Ack}{Acknowledgement}
\numberwithin{equation}{section}
\DeclareMathOperator*{\Ker}{\mathrm{Ker}}
\DeclareMathOperator*{\id}{\mathrm{id}}
\DeclareMathOperator*{\spanning}{\mathrm{span}}
\DeclareMathOperator*{\inv}{\mathrm{inv}}
\DeclareMathOperator*{\asy}{\mathrm{Asy}}
\newcommand{\mc}{\mathcal}
\title{Ring-theoretic (in)finiteness in reduced products of Banach algebras}
\author{Matthew Daws and Bence Horv\'{a}th}
\newcommand{\Addresses}{{
  \bigskip
  \footnotesize
  
  \textit{Addresses}: \\
  Matthew Daws: \textsc{Jeremiah Horrocks Institute, University of Central Lancashire, Preston, PR1~2HE, United Kingdom} \\
  Bence Horv\'{a}th: \textsc{Institute of Mathematics, Czech Academy of Sciences, \v{Z}itn\'a 25, 115 67 Prague 1, Czech Republic}\par\nopagebreak
  \textit{E-mail addresses}: \\ 
  Matthew Daws: \texttt{matt.daws@cantab.net, mdaws@uclan.ac.uk} \\
  Bence Horv\'{a}th: \texttt{horvath@math.cas.cz, hotvath@gmail.com}
}}
\begin{document}

\begin{abstract}
We study ring-theoretic (in)finiteness properties -- such as \emph{Dedekind-finiteness} and \emph{proper infiniteness} -- of ultraproducts (and more generally, reduced products) of Banach algebras.

Whilst we characterise when an ultraproduct has these ring-theoretic properties in terms of its underlying sequence of algebras, we find that, contrary to the $C^*$-algebraic setting, it is not true in general that an ultraproduct has a ring-theoretic finiteness property if and only if ``ultrafilter many'' of the underlying sequence of algebras have the same property.  This might appear to violate the continuous model theoretic counterpart of {\L}o\'s's Theorem; the reason it does not is that for a general Banach algebra, the ring theoretic properties we consider cannot be verified by considering a bounded subset of the algebra of \emph{fixed} bound.  For Banach algebras, we construct counter-examples to show, for example, that each component Banach algebra can fail to be Dedekind-finite while the ultraproduct is Dedekind-finite, and we explain why such a counter-example is not possible for $C^*$-algebras.
Finally the related notion of having \textit{stable rank one} is also studied for ultraproducts.\end{abstract}

\maketitle

\subjclass{2010}{\textit{ Mathematics Subject Classification.} 46M07, 46H99 (primary); 16B99, 43A20 (secondary)

\blfootnote{\keywords{\textit{Key words and phrases.} Asymptotic sequence algebra, Banach algebra, bicyclic monoid, Cuntz semigroup, Dedekind-finite, {\L}o\'s's Theorem, properly infinite, stable rank one, semigroup algebra, ultraproduct}}

\section{Introduction}

The notion of \emph{central sequences} (with respect to a limit, or an ultrafilter limit) has long
been a key tool in the study and classification of von Neumann algebras (see
\cite[Section~3, Chapter~XIV]{tak3} as a starting point).  More recently, such ideas have also
become central to the classification of $C^*$-algebras, see \cite{kir} for example.  The study of
ultrapowers is intimately connected to model theory, and indeed continuous model theory
has recently been successfully applied to the study of von Neumann and $C^*$-algebras,
\cite{fhlrtvw, fhs}.
Furthermore, the analogue of the ultrapower where usual convergence is used, the asymptotic sequence
algebra, appears in the study of the set theory of $C^*$-algebras, \cite{far}.

In constructing ultraproducts or the asymptotic sequence algebra, we of course only consider bounded sequences.  Many properties of $C^*$-algebras, such as those considered in this paper -- being Dedekind-finite, being properly infinite -- can be verified for a $C^*$-algebra by only looking at bounded, in fact, norm one, elements.  In this paper, we show that this is not true for general Banach algebras, and that furthermore, this has implications for ultraproducts, and asymptotic sequence algebras, of Banach algebras.  For example, the property of being Dedekind-finite passes from each component Banach algebra to the asymptotic sequence algebra, but the converse is not true.  This is a manifestation of the fact that the very language of continuous model theory involves the use of \emph{bounded} metric spaces, \cite{bbh}.  We remark that the asymptotic sequence algebra appears to have not been systematically
studied for Banach algebras; we think that this is likely to prove to be a useful construction in
general Banach algebra theory.

Let us now be more precise.  The main objects of study in this paper are the following.  Let $(\mc A_n)$ be a sequence of Banach algebras and let $\ell^\infty(\mc A_n)$
be the Banach space of all bounded sequences $(a_n)$, where $a_n\in\mc A_n$ for each $n$, turned into
a Banach algebra with pointwise operations.  Similarly, let $c_0(\mc A_n)$ be the subspace of
sequences $(a_n)$ with $\lim_n \|a_n\|=0$.  Then $c_0(\mc A_n)$ is a closed, two-sided ideal of 
$\ell^\infty(\mc A_n)$ and in fact, when each $\mc A_n$ is unital,
$\ell^\infty(\mc A_n)$ is the \emph{multiplier algebra} of
$c_0(\mc A_n)$ (compare \cite[Section~13]{far} for example).  The \emph{asymptotic sequence algebra}
$\asy(\mc A_n)$ is the quotient algebra $\ell^\infty(\mc A_n) / c_0(\mc A_n)$.  Let $\mc U$ be a non-principal ultrafilter on $\mathbb N$ and let $c_{\mc U}(\mc A_n)$ be the closed, two-sided ideal of
$\ell^\infty(\mc A_n)$ formed of sequences $(a_n)$ with $\lim_{n\rightarrow\mc U} \|a_n\|=0$.
The quotient $\ell^\infty(\mc A_n) / c_{\mc U}(\mc A_n)$ is the \emph{ultraproduct}
$(\mc A_n)_{\mc U}$, see \cite{hein}.
As $c_0(\mc A_n) \subseteq c_{\mc U}(\mc A_n)$ the ultraproduct is ``smaller''
than the asymptotic sequence algebra, although for the questions we consider here there will be
little difference.  If $\mc A_n=\mc A$ for all $n$, we write $\asy(\mc A)$ and $(\mc A)_{\mc U}$,
the latter known as the \emph{ultrapower} of $\mc A$.

We shall denote by a capital letter
$A$, and so forth, an element $A=(a_n) \in \ell^\infty(\mc A_n)$.  Let $\pi:\ell^\infty(\mc A_n) \rightarrow \asy(\mc A_n)$ and $\pi_{\mc U}:\ell^\infty(\mc A_n) \rightarrow(\mc A_n)_{\mc U}$ be the
quotient maps; then
\[ \|\pi(A)\| = \limsup_{n\rightarrow\infty} \|a_n\|, \qquad
\|\pi_{\mc U}(A)\| = \lim_{n\rightarrow\mc U} \|a_n\|. \]
In particular, given any $a\in \asy(\mc A_n)$ we can always find $A=(a_n)\in\ell^\infty(\mc A_n)$
with $\pi(A)=a$ and $\|A\|=\sup_n \|a_n\| = \|a\|$, and similarly for $(\mc A_n)_{\mc U}$.
As our notation indicates, we only work with \emph{sequences} of algebras $(\mc A_n)$, and
not with general nets, though our results could be formulated in a more general setting.  We
always assume, then, that our ultrafilters are non-principal, which on a countable indexing set,
is equivalent to being \emph{countably-incomplete}, \cite[Section~1]{hein}.

An approach we could have taken to our overall presentation would have been to work with 
``reduced products'', compare \cite[Section~2.3]{gh}.  If $\mc F$ is merely a filter on $\mathbb N$
(and not necessarily an ultrafilter) we may still define $c_{\mc F}(\mc A_n)$ to be those sequences
$(a_n)\in \ell^\infty(\mc A_n)$ such, for each $\epsilon>0$, that $\{ n \in \mathbb{N} : \|a_n\|<\epsilon \}\in
\mc F$.  Then $c_{\mc F}(\mc A_n)$ is a closed, two-sided ideal, and so we may define
$(\mc A_n)_{\mc F} = \ell^\infty(\mc A_n) / c_{\mc F}(\mc A_n)$.
This definition agrees with the previous one if $\mc F$ does happen to be an ultrafilter.
Furthermore, if $\mc F$ is the Fr\'echet filter (so $A\in\mc F$ if and only if $\mathbb N
\setminus A$ is finite) then $c_{\mc F}(\mc A_n) = c_0(\mc A_n)$ and so $(\mc A_n)_{\mc F}
= \asy(\mc A_n)$.
Consequently, we could have structured all our statements and proofs to be about reduced products.
Instead, we felt that writing statements and proofs for $\asy(\mc A_n)$ improved the readability
(as we can work with ``normal'' convergence, and limits at $\infty$).  Once the structure of an
argument is understood, it is then easy to adapt it to work for (ultra)filter products.

As motivation, and to make explicit links with continuous logic, let us consider when $\asy(\mc A_n)$ or $(\mc A_n)_{\mc U}$ is unital.  The first
author considered the ultrapower case in \cite[Proposition~2.1]{daw}, showing that $(\mc A)_{\mc U}$
is unital if and only if $\mc A$ is, the proof using just techniques from functional analysis.  In Section~\ref{sec:prelin} below, we consider how to use the tools of continuous model theory to show, and improve upon, this result.  From this perspective, the bulk of the proof of \cite[Proposition~2.1]{daw} is taken up with showing that one can write a sentence $\varphi$ in the language of Banach algebras such that a Banach algebra $\mc A$ is unital if and only if the interpretation $\varphi^{\mc A}$ of $\varphi$ in $\mc A$ is zero.  This is not entirely trivial, as in continuous logic
the existential and universal quantifiers are replaced by supremum and infimum.  Once such a sentence has been found, {\L}o\'s's Theorem for continuous model theory shows that $\varphi^{\mc A}=0$ if and only if $\varphi^{(\mc A)_{\mc U}}=0$, hence immediately showing the result.  In Section~\ref{sec:prelin} below we show how to also obtain an analogous result for ultraproducts.

An interesting aspect of continuous logic is that the language of Banach algebras requires us to choose \emph{bounded} domains for any sentence or term we use.  These domains are typically chosen so that in an interpretation of the language, they are closed balls of varying radii.  For the statement ``$\mc A$ is unital'' this seems innocuous, as units always have norm one.  However, for an abstract Banach algebra (as opposed to $C^*$-algebras, or most concretely occurring Banach algebras) this is merely convention.  Indeed, if we allow Banach algebras to have units of norm greater than one, then we can find a sequence $(\mc A_n)$ of unital Banach algebras such that $(\mc A_n)_{\mc U}$ is not unital, see Proposition~\ref{prop:12}.
We shall explore this phenomena extensively for the algebraic properties we consider below.

Once units have been studied, it is natural to look at idempotents, that is, elements $p\in\mc A$ with $p^2=p$.  Two idempotents $p,q$ are \emph{equivalent}, written $p\sim q$, when there are $a,b\in\mc A$ with
$p=ab$ and $q=ba$.  This is indeed an equivalence relation, see Section~\ref{sec:prelin} for this.  We say that $p,q$ are \emph{orthogonal} if
$pq=0$ and $qp=0$.  When $\mc A$ is a $C^*$-algebra, it is more natural to take account of the star-structure, and to ask that our idempotents are self-adjoint, giving the notion of a \emph{projection}.  In Section~\ref{sec:prelin} we give a quick survey of the relation between idempotents and projections, showing in particular that for $C^*$-algebras, for the properties we consider, one can equivalently work with either idempotents, or projections.

A unital algebra $\mc A$ is \emph{properly infinite} if there are orthogonal idempotents
$p,q\in\mc A$ with $p\sim 1$ and $q\sim 1$.  $\mc A$ is \emph{Dedekind-finite} if $p\sim 1$ implies
$p=1$, and is otherwise \emph{Dedekind-infinite}.  In this paper, we study how these notions
interact with the ultraproduct, and, especially, the asymptotic sequence algebra constructions.

In Section~\ref{sec:dedekind_finite}, we show that if $(\mc A_n)$ is a sequence of Dedekind-finite Banach algebras then also
$\asy(\mc A_n)$ is Dedekind-finite.  The converse is not, in general, true, but we show that it is
under further conditions. The definition of $p \sim 1$ entails the existence of $a,b\in\mc A$ with $p=ab, ba=1$.  To write a sentence in the language of Banach algebras expressing this requires us, amongst other things, to give a bounded domain to work in: that is, to norm bound $a,b$.  For technical reasons, it is easier to work with the notion of being Dedekind-infinite, and we introduce two constants, $C_{\textrm{DI}}(\mc A)$ and $C'_{\textrm{DI}}(\mc A)$, which measure how large $\|a\|\|b\|$, or $\|p\|$ where $p$ is idempotent, need to be to show that $\mc A$ is Dedekind-infinite; see Section~\ref{sec:di} for precise statements.

We show in Proposition~\ref{diconstant} that if $(\mc A_n)$ is a sequence of Dedekind-infinite Banach algebras, but with unbounded $C_{\textrm{DI}}$ constants, then $\asy(\mc A_n)$ is Dedekind-finite.  We then show, by way of examples of weighted semigroup algebras, that such a sequence can exist.  By contrast, such an example cannot occur for $C^*$-algebras, see Corollary~\ref{cor:1}.

This exploration also raises the question of renormings.  For a unital Banach algebra, we can renorm so as to ensure the unit has norm 1.  We find examples which show that $C_{\textrm{DI}}(\mc A)$ and $C'_{\textrm{DI}}(\mc A)$ need not be comparable, meaning that we can show $\mc A$ is Dedekind infinite for an idempotent $p$ of small norm, but to verify that $p\sim 1$, that is, find $a,b\in \mc A$ with $p=ab, 1=ba$, we need to use $a,b$ of large norm.  We make some remarks on whether it is possible to renorm $\mc A$ to make $C_{\textrm{DI}}(\mc A)$ small, but ultimately leave this as an open question.

In Section~\ref{sec:prop_inf}, we perform a similar analysis for being properly infinite: if
$\asy(\mc A_n)$ is properly infinite then $\mc A_n$ is properly infinite for large enough $n$.
Again, the converse holds when we have uniform norm control on the norms of the elements of $\mc A$ showing that the idempotents $p,q$, involved in the definition of ``properly infinite'', are equivalent to $1$.  A more involved example of a weighted semigroup algebra provides a counter-example, Theorem~\ref{thm:pi_seq_not_pi}.  We again show that it is possible to control the norms of the idempotents without being able to control the norms of elements involved in verifying that $p\sim1, q\sim 1$, see Proposition~\ref{prop:9}.  We also investigate renorming questions. When we do have sufficient norm control, we completely characterise when $\asy(\mc A_n)$ is properly infinite, Proposition~\ref{piconstant_conv}. This is again achieved by introducing constants $C'_{\textrm{PI}}(\mc A)$ and $C_{\textrm{PI}}(\mc A)$, which measure how big the product of the norms $\|p \| \|q\|$ are which witness proper infiniteness, and respectively, the product of the norms of the elements implementing the equivalence of the idempotents. Along the way, we show that there is an isometric embedding of the inductive limit of Banach algebras $(\mc A_n)$ into $\asy(\mc A_n)$, and use this to draw conclusions about when the inductive limit is properly infinite.

Finally, in Section~\ref{sec:stab_rk1}, we consider the property of having stable rank one, which we view as a strengthening
of being Dedekind-finite.  Here $\asy(\mc A)$ having stable rank one implies the same for $\mc A$;
again the converse does not hold, which we show by way of a counter-example.  Two themes running
through our consideration of all three properties are ``lifting'' properties from $\asy(\mc A_n)$
to $\ell^\infty(\mc A_n)$, which we view as being interesting from a technical viewpoint; and also
the certain ``norm control'' considerations mentioned above.

The organisation of the paper is as follows.  In Section~\ref{sec:prelin} below, we first consider some aspects of continuous model theory, and in particular {\L}o\'s's Theorem, before giving some background results about idempotents, projections, and present some results about being ``close'' to an idempotent.  In the subsequent sections we study, respectively, being Dedekind-finite, being properly infinite,
and having stable rank one.  We finish the paper with some open problems.

\section{Preliminaries}\label{sec:prelin}

For us, a \emph{Banach algebra} $\mc A$ will always have a contractive product $\|ab\| \leq \|a\|
\|b\|$.  If $\mc A$ is unital then we assume that the unit $1$ has $\|1\|=1$.  These assumptions
can always be achieved by giving $\mc A$ an equivalent norm (see \emph{e.g.} \cite[Proposition~2.1.9]{dal} and the comment following it).  As many of the results in this paper depend upon exact norm control, and not just upon the equivalence class of the norm, we should be a little careful of renorming arguments.  The reader is pointed to Section~\ref{sec:renomings} and Propositions~\ref{prop:13} and~\ref{prop:14} below for a wider discussion.

Let us make some remarks about continuous logic. Again, we shall motivate this discussion by considering when $\mc A$ is unital.
That $\mc A$ is unital, with unit $e\in \mc A$, can be expressed in first-order logic as
$\exists\, e\in\mc A, \forall\, a\in\mc A, ae=ea=a$.  However, in continuous logic, we
have to use $\sup$ and $\inf$ in place of $\forall$ and $\exists$, and furthermore, we can only quantify over bounded subsets of $\mc A$.  
In fact, typically we define bounded domains $B_n$ which, when interpreted in $\mc A$ will be the closed ball of radius
$n\in\mathbb N$.  Consider the sentence
\[ \varphi = \inf_{e\in B_1} \sup_{a\in B_1} \max\big( \|ae-a\|, \|ea-a\| \big). \]
Arguing exactly as in the proof of \cite[Proposition~2.1]{daw}, we can show that a Banach algebra $\mc A$ is unital if and only if $\varphi^{\mc A} = 0$, that is, the interpretation of $\varphi$ in $\mc A$ is zero.

{\L}o\'s's Theorem for continuous model theory, \cite[Proposition~4.3]{fhs} or
\cite[Theorem~5.4]{bbh}, shows that $\mc A$ will satisfy this sentence if and only if any ultrapower
$(\mc A)_{\mc U}$ does.  This immediately shows \cite[Proposition~2.1]{daw}.  In effect, {\L}o\'s's
Theorem takes care of the ``ultraproduct bookkeeping'' for us.  In fact, we can use {\L}o\'s's Theorem to improve \cite[Proposition~2.1]{daw}, as follows.

\begin{Prop}\label{prop:unital_ultrapower}
Let $(\mc A_n)$ be a sequence of Banach algebras.  Then $(\mc A_n)_{\mc U}$
is unital if and only if there is $U\in\mc U$ with $\mc A_n$ unital for each $n\in U$.
\end{Prop}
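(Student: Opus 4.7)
The plan is to combine the sentence
\[ \varphi \;=\; \inf_{e\in B_1}\,\sup_{a\in B_1}\,\max\big(\|ae-a\|,\,\|ea-a\|\big), \]
introduced above (which characterises unitality via $\varphi^{\mc A}=0$), with {\L}o\'s's Theorem for continuous model theory, which yields
\[ \varphi^{(\mc A_n)_{\mc U}} \;=\; \lim_{n\to\mc U}\varphi^{\mc A_n}. \]

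The ``if'' direction is immediate: if $U\in\mc U$ is such that $\mc A_n$ is unital for every $n\in U$, then $\varphi^{\mc A_n}=0$ on $U$, so $\lim_{n\to\mc U}\varphi^{\mc A_n}=0$ and therefore $(\mc A_n)_{\mc U}$ is unital.

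For the ``only if'' direction, {\L}o\'s's Theorem supplies $\lim_{n\to\mc U}\varphi^{\mc A_n}=0$, but this a priori only says that $\{n : \varphi^{\mc A_n}<\epsilon\}\in\mc U$ for each $\epsilon>0$, rather than that $\varphi^{\mc A_n}=0$ for ultrafilter-many $n$. Bridging this gap is the main obstacle, and I would resolve it by establishing that $\varphi$ itself exhibits a jump at zero: the inequality $\varphi^{\mc A}<1$ already forces $\mc A$ to be unital.

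The gap argument runs as follows. Pick $e\in B_1(\mc A)$ realising $\delta := \sup_{a\in B_1}\max(\|ae-a\|,\|ea-a\|) < 1$. By rescaling, the right multiplication operator $R_e:\mc A\to\mc A$, $a\mapsto ae$, satisfies $\|R_e-\id\|\le\delta<1$, hence is invertible on $\mc A$; similarly for $L_e:a\mapsto ea$. Setting $u := R_e^{-1}(e)$ gives $ue=e$, so $R_e(au)=a(ue)=ae=R_e(a)$ for every $a\in\mc A$, and injectivity of $R_e$ forces $au=a$; symmetrically $v:=L_e^{-1}(e)$ satisfies $va=a$ for all $a$. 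Then $u=vu=v$ is a two-sided unit, so $\mc A$ is unital. Applied to the set $U:=\{n : \varphi^{\mc A_n}<1\}\in\mc U$, this yields $\mc A_n$ unital for all $n\in U$, completing the proof.
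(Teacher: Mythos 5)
Your proof is correct, and the overall architecture (encode unitality in the sentence $\varphi$, apply {\L}o\'s's Theorem, then establish a ``gap at zero'' for $\varphi$) is exactly the paper's. Where you genuinely diverge is in the functional-analytic core of the gap argument. The paper takes $\epsilon$ small, perturbs the approximate unit $e$ to an honest idempotent $p$ via the square-root series of Proposition~\ref{approxidemp} (through Corollary~\ref{cor:zem_for_non_unital}), and then shows $p$ acts as the identity by a ``$\|b\|=\|pb-b\|<\|b\|$'' contradiction; this forces $\epsilon$ to satisfy $f_1(\epsilon)+\epsilon<1$. You instead observe that $\varphi^{\mc A}<1$ already gives $\|R_e-\id\|\leq\delta<1$ and $\|L_e-\id\|\leq\delta<1$, invert these multiplication operators by the Neumann series, and extract a right identity $u=R_e^{-1}(e)$ and a left identity $v=L_e^{-1}(e)$, which must coincide. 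This is more elementary (no idempotent-perturbation machinery) and yields a sharper quantitative gap: any $\delta<1$ suffices, whereas the paper needs $\epsilon$ small enough for $f_1(\epsilon)+\epsilon<1$. The only cosmetic quibble is the word ``realising'': the infimum over $e\in B_1$ need not be attained, so you should say you pick $e$ \emph{witnessing} some $\delta<1$, which exists precisely because $\varphi^{\mc A}<1$. Neither proof checks that the resulting unit has norm one (the paper's standing convention for unital algebras), but the paper's own proof has the same feature, and this is handled by renorming, so it is not a defect of your argument relative to theirs.
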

\begin{proof}
Set $\mc B = (\mc A)_{\mc U}$ and suppose that $\mc B$ is unital.
By {\L}o\'s's Theorem we know that $\varphi^{\mc B} = \lim_{n\rightarrow\mc U} \varphi^{\mc A_n}$.  We are supposing that $\varphi^{\mc B} = 0$, so for $\epsilon \in (0, 1/4)$ to be chosen later, there is $U\in\mc U$ with $\varphi^{\mc A_n} < \epsilon$ for $n\in U$.  Given such an $n$ we can find $e\in \mc A_n$ with $\|e\|\leq 1$ and $\|ea-a\|, \|ae-a\| \leq \epsilon \|a\|$ for each $a\in\mc A_n$.  We now apply Corollary~\ref{cor:zem_for_non_unital} below, to find $p\in \mc A_n$ with $p^2=p$ and $\|e-p\|\leq f_1(\epsilon)$, where $f_1$ is defined by equation~\ref{eq:zem_const} below.

Then $\|pc-c\| \leq \|p-e\|\|c\| + \epsilon\|c\| \leq (f_1(\epsilon)+\epsilon)\|c\|$ for each $c\in\mc A_n$.  If we choose $\epsilon>0$ small enough then $f_1(\epsilon)+\epsilon<1$.  Given $a\in\mc A_n$ let $b=a-pa$ so that $pb=0$ because $p^2=p$.  If $b\not=0$ then we conclude that $\|b\| = \|pb-b\| < \|b\|$, a contradiction.  Hence $pa=a$ and similarly $ap=a$, and so $p$ is the unit of $\mc A_n$, as required.

The converse is clear.
\end{proof}

We shall show later in Proposition~\ref{prop:12} that if we allow a Banach algebra to have a unit of norm greater than 1 then the previous proposition is false.

Let us make two remarks about this treatment of $\mc A$ being unital.
The first remark is that we could work with the bounded domain $B_1$ above only because of our assumption that if a Banach algebra is unital, then its unit has norm 1.  However, this is only ensured by a renorming argument.  By contrast, for a $C^*$-algebra, a unit $e$ is self-adjoint (because units are unique) and an idempotent, and so is a projection, and hence has norm 1 automatically.

Renorming arguments are not problematic when considering ultrapowers (or $\asy(\mc A)$) but can become questionable when considering ultraproducts (or $\asy(\mc A_n)$ for a varying sequence $(\mc A_n)$).  We shall make further remarks later, see Section~\ref{sec:renomings} below.

The second remark is that the bulk of the argument of, for example, Proposition~\ref{prop:unital_ultrapower} is functional analytic: showing that if $\varphi^{\mc A}$ is small, then in fact $\mc A$ is unital.  The necessary Banach algebraic techniques for the properties we consider below are only more involved, and for this reason, we have chosen to present our results in a way which does not explicitly use continuous model theory.

\subsection{Background results about idempotents}

Let us quickly recall why $\sim$ is an equivalence relation on idempotents.  Only transitivity is
non-trivial.  Let $p\sim q$ and $q\sim r$, say with $p=ab, q=ba$ and $q=cd, r=dc$.  Then $p = p^2 =
abab = aqb = (ac)(db)$ and $(db)(ac) = dqc = dcdc = r^2 = r$ so $p\sim r$.

For a $C^*$-algebra $\mc A$, rather than considering idempotents, it is usual to consider
\emph{projections}, which are by definition self-adjoint idempotents: $p\in\mc A$ with $p=p^*=p^2$.
It is a fun exercise to show that an idempotent in a $C^*$-algebra is a projection if and only if it
has norm one.  Further, the natural equivalence of projections is that of \emph{Murray--von Neumann
equivalence}, which says that $p\approx q$ if and only if $p=v^*v, q=vv^*$ for some $v\in\mc A$
(which is necessarily a partial isometry).  The proof in the previous paragraph still works to show
that $\approx$ is an equivalence relation.  Finally, it is then usual to define \emph{properly
infinite} and \emph{Dedekind-(in)finite} using projections and Murray--von Neumann equivalence.

We claim that it does not matter, for a $C^*$-algebra, if we use our definitions or the
$C^*$-definitions.  This is folklore, but we have been unable to find a reference, so to aid the
reader, we give the argument.  For the next few results, we fix a unital $C^*$-algebra $\mc A$.

\begin{Lem}\label{lem:pr1}
Let $p\in\mc A$ be an idempotent.  There is a projection $q\in\mc A$ with $p\sim q$.  We can
arrange for $pq=q, qp=p$ or for $pq=p, qp=q$.
\end{Lem}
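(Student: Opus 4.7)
The plan is to construct $q$ canonically from $p$ by a $C^*$-algebraic perturbation. The key ingredient is the positive invertible element
\[ h := 1 - (p-p^*)^2 = 1 - p - p^* + pp^* + p^*p \in \mc A. \]
Since $p - p^*$ is skew-adjoint, $-(p-p^*)^2 = (p-p^*)^*(p-p^*)$ is positive, so $h \geq 1$; in particular $h$ is invertible and $h^{-1}$ is positive and self-adjoint.

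The crucial calculation is the identity $hp = pp^*p = ph$, a short direct computation using only $p^2 = p$ and $(p^*)^2 = p^*$. Taking adjoints it follows that $h p^* = p^* h$ as well, so $h^{-1}$ commutes with both $p$ and $p^*$, and in particular with $pp^*$ and $p^*p$.

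Now define $q := pp^* h^{-1} = h^{-1} pp^*$. Self-adjointness is automatic from the commutation relations. Idempotence follows from the one-line rewrite $(pp^*)^2 = (pp^*p)p^* = hpp^*$, which gives $q^2 = pp^*pp^* h^{-2} = hpp^* h^{-2} = q$. The orientation identities $qp = p$ and $pq = q$ both reduce immediately to $pp^*p = hp$: indeed $qp = pp^* p h^{-1} = hph^{-1} = p$, and $pq = p \cdot pp^* h^{-1} = pp^* h^{-1} = q$. Setting $a := p, b := q$ then yields $p = ab$ and $q = ba$, so $p \sim q$ in the first orientation.

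For the other orientation, observe that $p^*$ is itself idempotent, since $(p^*)^2 = (p^2)^* = p^*$, and that the element $h$ is invariant under the swap $p \leftrightarrow p^*$. Applying the same construction to $p^*$ therefore produces a projection $q' := p^*p h^{-1}$ satisfying $p^* q' = q'$ and $q' p^* = p^*$; taking adjoints of these identities yields $q' p = q'$ and $p q' = p$, which is the desired second orientation, and $p \sim q'$ follows exactly as above. The only genuinely non-routine step in the argument is guessing the correct element $h$; once it is in hand, every verification is a short algebraic identity using only $p^2 = p$ and $(p^*)^2 = p^*$.
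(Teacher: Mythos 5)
Your proof is correct and is essentially identical to the paper's: the paper uses the same element $a = 1+(p-p^*)^*(p-p^*)$ (equal to your $h$ since $p-p^*$ is skew-adjoint), the same projection $q=pp^*a^{-1}$, and the same commutation and idempotence computations, with the second orientation given by $r=p^*pa^{-1}$ exactly as your $q'$. The only blemish is a harmless labelling slip: with $a:=p$, $b:=q$ you get $ab=q$ and $ba=p$ rather than the other way round, which still witnesses $p\sim q$ by the symmetry of the relation.
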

\begin{proof}
This is \cite[Exercise~3.11]{rll} or \cite[Proposition~3.2.10]{dal}. Set $a=1 +(p-p^*)^*(p-p^*) \geq 1$ so $a$ is invertible. Then $pa=pp^*p=ap$ and also $p^*a=ap^*$.  Set $q=pp^*a^{-1} = a^{-1}pp^*$. Then $q^2 = a^{-1} pp^*pp^*a^{-1} = a^{-1}app^*a^{-1} = q$ and $q=q^*$. Also $pq=q$ while $qp=a^{-1}pp^*p = a^{-1}ap=p$, hence $p\sim q$. If instead we set $r=p^*pa^{-1} = a^{-1}p^*p$ then $r^2=r=r^*$ while $pr=p$ and $rp=r$.
\end{proof}

\begin{Lem}\label{lem:pr2}
Let $p,q\in\mc A$ be projections with $p\sim q$.  Then $p\approx q$.
\end{Lem}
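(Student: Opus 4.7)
The plan is to construct a partial isometry $v\in\mc A$ with $v^*v=p$ and $vv^*=q$. Starting from $a,b\in\mc A$ with $p=ab$ and $q=ba$, I would first perform the standard ``cornering'' reduction, replacing $a$ by $paq$ and $b$ by $qbp$. Using $p^2=p$, $q^2=q$, $ab=p$ and $ba=q$, one checks that the new pair still satisfies $ab=p$ and $ba=q$, and additionally $pa=a=aq$ and $qb=b=bp$. We may therefore assume these four ``corner'' identities throughout the rest of the argument.

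After the reduction one has $a\in p\mc A q$ and $b\in q\mc A p$, so both $b^*b$ and $aa^*$ lie in the unital corner $C^*$-algebra $p\mc A p$, whose unit is $p$. The crux of the proof is the observation that $aa^*$ is the inverse of $b^*b$ inside $p\mc A p$. Granting this, $(b^*b)^{1/2}$ is also invertible in $p\mc A p$, so we may form $(b^*b)^{-1/2}\in p\mc A p$ via continuous functional calculus in this unital corner. The key computation behind the crux is the direct expansion
\[ (b^*b)(aa^*)=b^*(ba)a^*=b^*qa^*=b^*a^*=(ab)^*=p, \]
together with its symmetric counterpart $(aa^*)(b^*b)=a(ba)^*b=aqb=ab=p$, where the corner identities $b^*q=b^*$ and $aq=a$ are used.

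With $(b^*b)^{-1/2}\in p\mc A p$ in hand, the final step is to define $v=b\,(b^*b)^{-1/2}\in\mc A$ and verify
\[ v^*v=(b^*b)^{-1/2}(b^*b)(b^*b)^{-1/2}=p, \qquad vv^*=b(aa^*)b^*=(ba)(a^*b^*)=q(ba)^*=q\cdot q=q, \]
which is precisely the relation $p\approx q$. The main obstacle is really just spotting that $aa^*$ inverts $b^*b$ in the corner algebra; once this is noted, the remainder of the argument is a polar-decomposition calculation carried out entirely inside the unital $C^*$-algebra $p\mc A p$, with the necessary invertibility already secured.
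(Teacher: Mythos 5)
Your proof is correct and takes essentially the same route as the paper: both arguments corner the intertwiner into $q\mc A p$ and then produce the partial isometry as its polar part inside the unital corner $p\mc A p$ (the paper's $u=cd$ with $c=qbp$ and $d=|c|^{-1}$ is your $v=b(b^*b)^{-1/2}$). The only differences are minor: the paper gets invertibility of $c^*c$ in $p\mc A p$ from the inequality $p=c^*a^*ac\le\|a\|^2c^*c$ rather than from your neater explicit inverse $aa^*$, and it treats the degenerate case $p=0$ separately, which your argument tacitly assumes away (harmlessly, since $p=0$ forces $q=q^2=b(ab)a=0$).
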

\begin{proof}
This is \cite[Proposition~3.2.10]{dal}.  Suppose $p=ab$ and $q=ba$.  Let $c=qbp$ thus $qc=c=cp$; as also $c = babab$ we have that $ac=p$ and $ca=q$, and consequently $cac=cp=c$.

Suppose $p\not=0$. We have that $p = p^*p = c^*a^*ac \leq \|a\|^2c^*c$. Working in $p\mc Ap$,
we see that $c^*c$ is invertible, so there is $d\in p\mc Ap$ with $d|c| = |c|d = p$.  Set
$u=cd$ so $u^*u=d^*|c|^2d = p$. Then $qu = cacd = cd = u$ and hence $uu^* = uu^*q = cdd^*c^*ca = cdd^*|c|^2a = cdp|c|a = cd|c|a = cpa = caca = q$. Thus $p\approx q$.

If $p=0, q\not=0$ then swap the roles of $p$ and $q$. If $p=q=0$ then clearly $p\approx q$.
\end{proof}

\begin{Prop}\label{prop:pr3}
$\mc A$ is properly infinite as a Banach algebra if and only if it is properly infinite as a
$C^*$-algebra.
\end{Prop}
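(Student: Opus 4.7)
The plan is to prove both directions; the ($C^*$ $\Rightarrow$ Banach algebra) direction is straightforward, while the converse requires a careful choice of replacement projections.

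For the easy direction, suppose $\mc A$ is properly infinite as a $C^*$-algebra. Then there are orthogonal projections $p, q$ (which are in particular orthogonal idempotents) with $p \approx 1$ and $q \approx 1$. The Murray--von Neumann equivalence $p = v^* v$, $1 = v v^*$ is a particular case of the Banach algebra equivalence (take $a = v^*$, $b = v$), so $p \sim 1$, and similarly $q \sim 1$. Hence $\mc A$ is properly infinite as a Banach algebra.

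For the converse, assume $\mc A$ is properly infinite as a Banach algebra, with orthogonal idempotents $p, q$ satisfying $p \sim 1 \sim q$. The idea is to replace $p, q$ with equivalent projections $p', q'$ using Lemma~\ref{lem:pr1}, while retaining orthogonality. This requires the flexibility provided by the two options of the lemma. Apply Lemma~\ref{lem:pr1} to $p$ with the first option, obtaining a projection $p'$ with $p \sim p'$, $p p' = p'$, $p' p = p$. Apply the lemma to $q$ with the second option, obtaining a projection $q'$ with $q \sim q'$, $q q' = q$, $q' q = q'$. Then
\[ q' p' = (q' q)(p p') = q' (q p) p' = 0, \]
using orthogonality of $p, q$. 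Since $p', q'$ are self-adjoint, $p' q' = ((p'q')^*)^* = (q'p')^* = 0$ as well, so $p', q'$ are orthogonal projections.

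It remains to verify that $p'$ and $q'$ are Murray--von Neumann equivalent to $1$. Since $\sim$ is an equivalence relation on idempotents, $p' \sim p \sim 1$ and $q' \sim q \sim 1$. Since $1, p', q'$ are all projections, Lemma~\ref{lem:pr2} upgrades these Banach algebra equivalences to Murray--von Neumann equivalences: $p' \approx 1$ and $q' \approx 1$. Hence $\mc A$ is properly infinite as a $C^*$-algebra.

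The main (mild) obstacle is engineering the orthogonality of $p'$ and $q'$; the key point is to notice that Lemma~\ref{lem:pr1} gives us two natural choices for the replacement projection, and that using option~(a) on one side and option~(b) on the other causes the relevant product to telescope through $qp = 0$.
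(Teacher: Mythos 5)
Your proposal is correct and follows essentially the same route as the paper: apply Lemma~\ref{lem:pr1} with the first option to $p$ and the second to $q$, so that $q'p' = q'qpp' = 0$, and then upgrade $p'\sim 1$, $q'\sim 1$ to Murray--von Neumann equivalences via Lemma~\ref{lem:pr2}. The only difference is that you spell out the easy direction, which the paper dismisses as clear.
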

\begin{proof}
If $\mc A$ is properly infinite as a Banach algebra then there are idempotents $p,q$ with $pq=qp=0$
and $p\sim 1, q\sim 1$.  By Lemma~\ref{lem:pr1} there are projections $p',q'$ with $pp'=p',
p'p=p$ and $qq'=q, q'q=q'$.  Then $1 \sim p \sim p'$ so $p'\approx 1$ by Lemma~\ref{lem:pr2},
and similarly $q'\approx 1$.
Also $q'p' = q'qpp' = 0$ and similarly $p'q'=0$, hence $\mc A$ is properly infinite as a
$C^*$-algebra.  The converse is clear.
\end{proof}

\begin{Prop}\label{prop:pr4}
$\mc A$ is Dedekind-finite as a Banach algebra if and only if it is Dedekind-finite as a
$C^*$-algebra.
\end{Prop}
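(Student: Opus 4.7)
The plan is to show the two implications by using Lemmas~\ref{lem:pr1} and~\ref{lem:pr2} to bridge between algebraic equivalence $\sim$ (applied to general idempotents) and Murray--von Neumann equivalence $\approx$ (applied to projections), in parallel to the proof of Proposition~\ref{prop:pr3}.

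The easy direction is that Banach-algebraic Dedekind-finiteness implies $C^*$-algebraic Dedekind-finiteness. Indeed, if $p\in\mc A$ is a projection with $p\approx 1$, witnessed by $v\in\mc A$ with $v^*v=p$ and $vv^*=1$, then taking $a=v^*$ and $b=v$ shows $p=ab$ and $ba=1$, so $p\sim 1$. Banach-algebraic Dedekind-finiteness forces $p=1$.

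For the converse, suppose $\mc A$ is Dedekind-finite as a $C^*$-algebra, and let $p\in\mc A$ be an idempotent with $p\sim 1$. Apply Lemma~\ref{lem:pr1} to obtain a projection $q\in\mc A$ with $p\sim q$, say with $pq=q$ and $qp=p$. Transitivity of $\sim$ (as recalled at the start of this subsection) gives $q\sim 1$, and then Lemma~\ref{lem:pr2} upgrades this to $q\approx 1$. The Dedekind-finiteness of $\mc A$ as a $C^*$-algebra forces $q=1$, and then the relation $pq=q$ becomes $p=1$, as required.

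There is no real obstacle: the work has already been done in Lemmas~\ref{lem:pr1} and~\ref{lem:pr2}, which translate between idempotents and projections and between the two equivalence relations, and the argument is a straightforward parallel of Proposition~\ref{prop:pr3}. The only small point to watch is to use the orientation $pq=q, qp=p$ from Lemma~\ref{lem:pr1} (rather than the other), so that once $q=1$ is deduced, the relation $pq=q$ immediately yields $p=1$.
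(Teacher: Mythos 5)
Your proof is correct and follows essentially the same route as the paper's: obtain a projection $q$ with $p\sim q$ via Lemma~\ref{lem:pr1}, use transitivity and Lemma~\ref{lem:pr2} to get $q\approx 1$, conclude $q=1$, and recover $p=1$ from the relation $pq=q$. The only difference is that you spell out the ``clear'' direction explicitly, which the paper leaves to the reader.
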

\begin{proof}
If $\mc A$ is Dedekind-finite as a $C^*$-algebra, then let $p\in\mc A$ be an idempotent with
$p\sim 1$.  There is a projection $q\in\mc A$ with $q\sim p$.  Thus $q\sim p\sim 1$ so $q\sim 1$
hence $q\approx 1$, and consequently $q=1$.  We can arrange that $pq=q$, so as $q=1$,
we conclude that $1 = q = pq = p$.
Therefore $\mc A$ is Dedekind-finite as a Banach algebra.  The converse is clear.
\end{proof}

We now return to the general case.  For $M>0$ define
\begin{align}
f_M: \, [0,1/4) \rightarrow \mathbb{R}; \quad f_M(t) = (M + 1/2)((1-4t)^{-1/2}-1). \label{eq:zem_const}
\end{align}
It is clear that $f_M$ is a non-negative continuous function. Furthermore, $f_M \leq f_N$ 
when $N > M >0$.

The following lemma is well-known, it can be found for example in \cite[Lemma~2.1]{amnm2} without a proof.  For completeness we provide a (functional calculus argument based) proof.
In what follows, if $\mathcal{A}$ is an algebra, then $\inv(\mathcal{A})$ denotes the group of invertible elements of $\mathcal{A}$. If $a,b \in \mathcal{A}$, then the \textit{commutator} of $a$ and $b$ is $[a,b]:= ab -ba$.

\begin{Prop}\label{approxidemp}
Let $\mathcal{A}$ be a unital Banach algebra, and let $a \in \mathcal{A}$ be such that $\textstyle{\nu := \Vert a^2 - a \Vert < 1/4}$. Then there is an idempotent $p \in \mathcal{A}$ such that $\textstyle{\Vert p - a \Vert \leq f_{\Vert a \Vert}(\nu)}$ holds. Moreover, if $y \in \mathcal{A}$ is such that $[y,a]=0$ then $[y,p]=0$.
\end{Prop}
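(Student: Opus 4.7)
My plan would be to construct $p$ explicitly via a convergent power series in $a$, motivated by holomorphic functional calculus applied to a function sending a neighbourhood of $\{0,1\}$ to $\{0,1\}$. The crucial algebraic identity to spot is
\[
(1-2a)^2 = 1 - 4a + 4a^2 = 1 + 4(a^2 - a),
\]
which turns the near-zero element $a^2 - a$ into a perturbation of $1$. Setting $z := a^2 - a$, since $\|4z\| = 4\nu < 1$, the binomial series
\[
u := (1 + 4z)^{-1/2} = \sum_{k=0}^{\infty}\binom{-1/2}{k}(4z)^k
\]
converges absolutely in $\mathcal{A}$, and by construction $u^2(1-2a)^2 = u^2(1+4z) = 1$. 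My candidate is then
\[
p := \tfrac{1}{2}\bigl(1 - (1-2a)u\bigr).
\]

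For idempotence I would simply expand $p^2$; using that $u$ is a norm limit of polynomials in $a$ (so commutes with $a$ and with $1-2a$) together with $(1-2a)^2 u^2 = 1$, the cross terms collapse and $p^2 = p$ in a line or two.

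For the norm bound I would rewrite
\[
p - a = \tfrac{1}{2}(1-2a) - \tfrac{1}{2}(1-2a)u = \tfrac{1}{2}(1-2a)(1-u),
\]
so that $\|p - a\| \leq \tfrac{1}{2}(1 + 2\|a\|)\|1-u\|$. The key estimate is the sign observation that $(-1)^k\binom{-1/2}{k} \geq 0$ for all $k \geq 1$, which upgrades the triangle inequality into the equality-type bound
\[
\|1-u\| \leq \sum_{k=1}^{\infty}\bigl|\tbinom{-1/2}{k}\bigr|(4\nu)^k = (1-4\nu)^{-1/2} - 1.
\]
Combining yields $\|p-a\| \leq (\|a\| + 1/2)\bigl((1-4\nu)^{-1/2} - 1\bigr) = f_{\|a\|}(\nu)$, matching the stated constant exactly.

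The commutation claim is automatic: since $u$, and hence $p$, is a norm limit of polynomials in $a$, any $y$ with $[y,a]=0$ satisfies $[y,p]=0$. The only genuine obstacle is locating the correct formula for $p$; the usual soft ``functional calculus gives an idempotent'' argument is not sharp enough, and it is the specific square-root identity $(1-2a)^2 = 1+4z$ that produces the clean constant. Everything after that is bookkeeping with the binomial series.
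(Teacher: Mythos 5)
Your proposal is correct and is essentially the paper's own proof in different notation: your $u=(1+4(a^2-a))^{-1/2}$ is exactly the paper's series $s=\sum_{n\geq 0}\binom{2n}{n}(a-a^2)^n$ (since $(-1)^k\binom{-1/2}{k}4^k=\binom{2k}{k}$), and your $p=\frac{1}{2}\bigl(1-(1-2a)u\bigr)$ is the paper's $p=(a-\frac{1}{2})s+\frac{1}{2}$, with the same norm estimate and the same commutation argument. The only presentational difference is that the paper justifies $s^2=(1-4a+4a^2)^{-1}$ explicitly via the Cauchy product formula and the Neumann series, whereas you assert $u^2(1+4z)=1$ ``by construction''; that step deserves the one-line Cauchy-product verification, but otherwise the two arguments coincide.
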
	
\begin{proof}
As $\nu < 1/4$,  it follows that the series $\textstyle{\sum_{n=0}^{\infty} \binom{2n}{n} \nu^n}$ converges in $[0, \infty)$ with sum $(1-4 \nu)^{-1/2}$, consequently
\begin{align}
s:= \sum_{n=0}^{\infty} \binom{2n}{n} (a-a^2)^n
\end{align}
is absolutely convergent and therefore convergent in $\mc{A}$. Let us define $\textstyle{p:=(a-1/2)s + 1/2}$. Clearly, if $y \in \mc{A}$ is such that $[y,a]=0$ then $[y,s]=0$, and consequently $[y,p]=0$. We show that $p \in \mc{A}$ is an idempotent, which is equivalent to showing that $\textstyle{(2p-1)^2 = 1}$.

We first observe that by the Cauchy product formula
\begin{align}\label{noholcalc}
s^2 = \sum\limits_{n=0}^{\infty} \left( \sum\limits_{k=0}^n \dbinom{2k}{k} \dbinom{2(n-k)}{n-k} \right) (a-a^2)^n = \sum\limits_{n=0}^{\infty} 4^n (a-a^2)^n.
\end{align}	
Secondly, by $\nu < 1/4$ it follows that $\textstyle{1-4a+4a^2 \in \inv(\mc{A})}$ with $\textstyle{(1-4a+4a^2)^{-1}}= \textstyle{\sum_{n=0}^{\infty} (4(a-a^2))^n}$ by the Carl Neumann series. Thus $\textstyle{s^2 = (1-4a+4a^2)^{-1}}$ and consequently
$\textstyle{(2p-1)^2 =((2a-1)s)^2 = (2a-1)^2 s^2 = (4a^2 -4a +1)(1-4a+4a^2)^{-1}=1}$.
Moreover, we have that
\begin{align}
\Vert p-a \Vert &= \Vert (a- 1/2)s + 1/2 -a \Vert 
= \left\Vert (a- 1/2) (s-1) \right\Vert \notag \\
&\leq (\Vert a \Vert + 1/2) \Vert s-1 \Vert \notag
\leq (\Vert a \Vert + 1/2) \sum\limits_{n=1}^{\infty} \tbinom{2n}{n} \Vert a - a^2 \Vert^n \notag \\
& = (\Vert a \Vert + 1/2)((1-4 \nu)^{-1/2}-1) \notag 
 = f_{\Vert a \Vert} (\nu)
\end{align}
by the definition of $f_{\Vert a \Vert}$.
\end{proof}

\begin{Cor}\label{cor:zem_for_non_unital}
Let $\mc A$ be a Banach algebra, and let $a\in\mc A$ with $\nu:=\|a^2-a\|<1/4$ and $f_{\|a\|}(\nu) < 1$.  Then there is an idempotent $p\in\mc A$ with $\|p-a\|\leq f_{\|a\|}(\nu)$.
\end{Cor}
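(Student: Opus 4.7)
The plan is to reduce to Proposition~\ref{approxidemp} by passing to the unitisation. Form $\mc{A}^+ := \mc{A} \oplus \mathbb{C}$ as an algebra with product $(b,\lambda)(c,\mu) = (bc + \mu b + \lambda c, \lambda\mu)$ and norm $\|(b,\lambda)\| := \|b\| + |\lambda|$; a routine check confirms that this makes $\mc{A}^+$ into a Banach algebra with contractive product whose unit $1 := (0,1)$ has norm one, and that $\mc{A}$ embeds isometrically as the kernel of the character $\pi\colon \mc{A}^+ \to \mathbb{C}$, $(b,\lambda) \mapsto \lambda$. In particular, the quantities $\|a\|$ and $\nu = \|a^2 - a\|$ are the same whether computed in $\mc{A}$ or in $\mc{A}^+$, so the hypotheses of Proposition~\ref{approxidemp} are satisfied in $\mc{A}^+$ with the same value of $f_{\|a\|}(\nu)$.

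Applying Proposition~\ref{approxidemp} in $\mc{A}^+$ produces an idempotent $p \in \mc{A}^+$ with $\|p - a\| \leq f_{\|a\|}(\nu)$. Write $p = b + \lambda\cdot 1$ with $b \in \mc{A}$ and $\lambda \in \mathbb{C}$. Applying the character $\pi$ to the equation $p^2 = p$ yields $\lambda^2 = \lambda$, so $\lambda \in \{0, 1\}$.

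If $\lambda = 1$, then by the chosen unitisation norm $\|p - a\| = \|(b - a) + 1\| = \|b - a\| + 1 \geq 1$, contradicting the hypothesis $\|p - a\| \leq f_{\|a\|}(\nu) < 1$. Hence $\lambda = 0$ and $p = b \in \mc{A}$, as required. The substantive content is Proposition~\ref{approxidemp}; the only real work is the bookkeeping for the unitisation, and the key observation is that the extra hypothesis $f_{\|a\|}(\nu) < 1$ is precisely what is needed to eliminate the scalar part of the idempotent $p$. I would expect no serious obstacle.
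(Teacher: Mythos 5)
Your proof is correct and is essentially the same as the paper's: both pass to the unitisation with the $\ell^1$-norm, apply Proposition~\ref{approxidemp} there, deduce that the scalar part of the resulting idempotent is $0$ or $1$, and rule out $1$ using $f_{\|a\|}(\nu)<1$ and the additivity of the unitisation norm. The only cosmetic difference is that you extract $\lambda^2=\lambda$ via the character while the paper expands $p^2$ directly.
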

\begin{proof}
If $\mc A$ is not unital, consider the unitisation $\mc A^\sharp$ with
adjoined unit $1$.  So $\mc A^\sharp = \mc A \oplus \mathbb C1$ with norm
$\|a+\alpha 1\| = \|a\| + |\alpha|$.  By the preceding applied to $\mc A^\sharp$ there is $p=q+\alpha 1$ with $p^2=p$ and $\|p-a\|\leq f_{\|a\|}(\nu)$.  Thus $p^2 = q^2 + 2\alpha q + \alpha^2 1$ so $p^2=p$ implies that either $\alpha=0$ or $\alpha=1$.  If $\alpha=1$ then $\|p-a\| = \|q-a\| + |\alpha| \geq 1$ a contradiction. So $\alpha=0$ and $p=q$ is an idempotent.
\end{proof}

A related result is the following, which is also folklore, a stronger version of which was proved by Zem\'{a}nek in \cite[Lemma~3.1]{zemanek0}. For the convenience of the reader we give a self-contained elementary proof.

\begin{Lem}\label{zemanek}
	Let $\mathcal{A}$ be a unital Banach algebra, and let $p,q \in \mathcal{A}$ be idempotents with $\Vert p -q \Vert <1$. Then $p \sim q$.
\end{Lem}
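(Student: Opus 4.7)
The plan is to prove the stronger statement that $p$ and $q$ are \emph{similar} in $\mathcal{A}$, namely that there exists $u \in \inv(\mathcal{A})$ with $pu = uq$. Similarity implies the required equivalence for free: setting $x := u$ and $y := u^{-1}p$, one computes $xy = u \cdot u^{-1}p = p$ and $yx = u^{-1}\cdot pu = u^{-1}\cdot uq = q$, so $p \sim q$.

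Following a classical recipe for intertwining idempotents in a ring, I would take
\[ u := pq + (1-p)(1-q), \qquad v := qp + (1-q)(1-p). \]
The idempotent relations $p(1-p) = 0 = q(1-q)$ immediately give $pu = pq = uq$, which is the intertwining identity. The same cancellations show that in the expansion of $uv$ the cross terms $pq(1-q)(1-p)$ and $(1-p)(1-q)qp$ both vanish, leaving $uv = pqp + (1-p)(1-q)^2(1-p)$; using $(1-q)^2 = 1-q$ and expanding the remaining product, one obtains
\[ uv = 1 - p - q + pq + qp = 1 - (p-q)^2, \]
and the symmetric calculation yields $vu = 1 - (p-q)^2$ as well.

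Since $\|(p-q)^2\| \leq \|p-q\|^2 < 1$, the Neumann series places $1 - (p-q)^2$ in $\inv(\mathcal{A})$. From the fact that the commuting product $uv = vu$ is invertible it follows that $u$ itself is invertible: with $c := (uv)^{-1}$ one has $u(vc) = 1$ and $(cv)u = c(vu) = c(uv) = 1$. Combined with $pu = uq$, this produces the desired $u \in \inv(\mathcal{A})$ and the argument concludes via the first paragraph. The only mildly delicate step is the bookkeeping in the computation of $uv$; the deduction of invertibility of $u$ from that of $uv = vu$, and the passage from similarity to equivalence, are both very short formal arguments.
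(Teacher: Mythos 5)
Your proof is correct, but it takes a genuinely different route from the paper. You construct the classical intertwiner $u = pq + (1-p)(1-q)$ and its partner $v = qp+(1-q)(1-p)$, verify $pu = uq = pq$ and $uv = vu = 1-(p-q)^2$, and invoke the Neumann series to invert the latter; this yields the stronger conclusion that $p$ and $q$ are \emph{similar}, from which $p\sim q$ follows by setting $a=u$, $b=u^{-1}p$. All the algebra checks out: the cross terms in $uv$ do vanish by $q(1-q)=0=(1-q)q$, the expansion $1-p-q+pq+qp = 1-(p-q)^2$ is right, and the deduction that a two-sided factor of an invertible commuting product is invertible is sound. The paper instead builds an involution $c = d(p+q-1)$ with $c^2=1$, where $d$ is the binomial power series for $(1-(p-q)^2)^{-1/2}$, and takes $a = pc$, $b = cp$; this also gives similarity (since $c^{-1}=c$), so neither proof is more general. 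The practical difference is in the explicit norm estimates each construction yields for $\|a\|\|b\|$: the paper's choice leads directly to the bound $\|a\|\|b\| \leq \|p\|^2\|p+q-1\|^2(1-\|p-q\|^2)^{-1}$ recorded in Remark~\ref{rem:zemanek}, which is then used quantitatively in Proposition~\ref{piconstant}. Your construction would give comparable but differently shaped bounds (via $\|u\|$, $\|v\|$ and $\|(1-(p-q)^2)^{-1}\|$), so if you intended to feed your lemma into the later norm-control arguments you would need to redo that bookkeeping; as a proof of the lemma as stated, however, your argument is complete.
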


\begin{proof}
	We first observe that $(p-q)^2$ commutes with $p$ and $q$. Indeed,
	\begin{align}
	p(p-q)^2 = p(p-pq-qp+q) = p-pqp = (p-pq -qp +q)p = (p-q)^2 p,
	\end{align}
	and similarly for $q$ and $(p-q)^2$. Now since $\Vert p-q \Vert <1$, clearly $\textstyle{\Vert (p-q)^2 \Vert <1}$ and thus as in the proof of Proposition \ref{approxidemp}, the series 
	\begin{align}
	d:= \sum\limits_{n=0}^{\infty} \dbinom{2n}{n} \frac{(p-q)^{2n}}{4^n}
	\end{align}
	converges (absolutely) in $\mathcal{A}$, and $d$ commutes with $p$ and $q$.
	
	Again, as in the proof of Proposition \ref{approxidemp} we conclude $\textstyle{d^2 = (1-(p-q)^2)^{-1}}$. Another easy calculation shows
	\begin{align}
	(p+q-1)^2 = p+pq-p+qp+q-q-p-q+1 = 1-p +pq +qp -q = 1- (p-q)^2.
	\end{align}
	We define $c:= d(p+q-1)$. Since $p$ and $q$ commute with $d$, it follows that
	\begin{align}
	c^2 =d^2(p+q-1)^2 = d^2 (1-(p-q)^2) = 1.
	\end{align}
	We are now ready to show that $p \sim q$. To see this, we first observe
	\begin{align}
	pc &= pd(p+q-1) = dp(p+q-1) = d(p+pq-p) = dpq \notag \\
	cq &= d(p+q-1)q = d(pq+q-q) = dpq
	\end{align}
	and thus $pc=cq$. Consequently $(pc)(cp) = pc^2p=p$ and $(cp)(pc)= cpc = c^2q =q$ follow, concluding the proof.
\end{proof}

\begin{Rem}\label{rem:zemanek}
From this proof, we see that $p=ab$ and $q=ba$ for $a,b\in \mathcal{A}$, where $a=pc, b=cp$, and $c=d(p+q-1)$, where $d$ is given by a power series which yields the norm estimate
\[ \|d\| \leq \big(1-\|(p-q)^2\|\big)^{-1/2} \leq \big(1-\|p-q\|^2\big)^{-1/2}. \]
Thus $\|a\|\|b\| \leq \|p\|^2 \|p+q-1\|^2 (1-\|p-q\|^2)^{-1}
\leq \|p\|^2 (\|2p-1\|+ \|p-q\|)^2 (1-\|p-q\|^2)^{-1}$.
\end{Rem}

\section{Dedekind-finiteness}\label{sec:dedekind_finite}

In this section, we show that if $\mc A_n$ is Dedekind-finite for each $n$, then also
$\asy(\mc A_n)$ is Dedekind-finite.  The converse is not true without some form of ``norm control'',
and we provide a counter-example in the Banach algebra case, while also clarifying why the
converse does hold for $C^*$-algebras.

\subsection{When the sequence consists of Dedekind-finite algebras}

In the following proof, for clarity, given a sequence $(\mc A_n)$ of unital algebras, we write $1_n$
for the unit of $\mc A_n$.

\begin{Thm}\label{dfpositive}
Let $(\mathcal{A}_n)$ be a sequence of Dedekind-finite Banach algebras. Then $\asy(\mathcal{A}_n)$ is Dedekind-finite.
\end{Thm}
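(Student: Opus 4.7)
The plan is to lift a Dedekind-infinite witness from $\asy(\mathcal{A}_n)$ back to $\ell^\infty(\mathcal{A}_n)$ and argue componentwise. Suppose $p \in \asy(\mathcal{A}_n)$ is an idempotent with $p \sim 1$, witnessed by $a, b \in \asy(\mathcal{A}_n)$ satisfying $ab = p$ and $ba = 1$. Choose representatives $A = (a_n), B = (b_n), P = (p_n) \in \ell^\infty(\mathcal{A}_n)$ of $a, b, p$. These relations translate into the norm asymptotics
\[ \|p_n^2 - p_n\| \to 0, \qquad \|a_n b_n - p_n\| \to 0, \qquad \|b_n a_n - 1_n\| \to 0. \]
The goal is to show $\|p_n - 1_n\| \to 0$, which is equivalent to $p$ being the unit of $\asy(\mathcal{A}_n)$.

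The key step is to pass from an asymptotic left inverse to an honest one. Since $\|b_n a_n - 1_n\| \to 0$, for $n$ large enough the element $b_n a_n$ is invertible by the Neumann series, and then $\tilde{b}_n := (b_n a_n)^{-1} b_n$ is an exact left inverse of $a_n$, i.e.\ $\tilde{b}_n a_n = 1_n$. A one-line check shows that $q_n := a_n \tilde{b}_n$ is an idempotent (since $q_n^2 = a_n (\tilde{b}_n a_n) \tilde{b}_n = q_n$) satisfying $q_n \sim 1_n$ in $\mathcal{A}_n$ via $a_n, \tilde{b}_n$. The Dedekind-finiteness of $\mathcal{A}_n$ now forces $q_n = 1_n$, i.e.\ $a_n \tilde{b}_n = 1_n$. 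This is the \emph{only} point where the hypothesis on the $\mathcal{A}_n$ enters.

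We conclude by a perturbation estimate. Write
\[ a_n b_n - 1_n = a_n b_n - a_n (b_n a_n)^{-1} b_n = a_n \bigl(1_n - (b_n a_n)^{-1}\bigr) b_n, \]
and bound its norm by $\|a_n\|\,\|b_n\|\,\|1_n - (b_n a_n)^{-1}\|$. Since $A, B$ lie in $\ell^\infty(\mathcal{A}_n)$ the factors $\|a_n\|,\|b_n\|$ are uniformly bounded, and $(b_n a_n)^{-1} \to 1_n$ in norm, so the right-hand side tends to $0$. Together with $\|a_n b_n - p_n\| \to 0$ this yields $\|p_n - 1_n\| \to 0$, as desired.

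I do not anticipate a real obstacle here. The argument is essentially ``componentwise Dedekind-finiteness up to a decaying perturbation controlled by the ambient $\ell^\infty$-norm''. The mildly delicate point is recognising that one should replace the asymptotic left inverse $b_n$ by an \emph{exact} left inverse $\tilde{b}_n = (b_na_n)^{-1}b_n$ before invoking the hypothesis, rather than trying to directly convert $p_n$ into an idempotent via Proposition~\ref{approxidemp} and then work with $p_n$ itself. The same proof, replacing ``for large $n$'' with ``for $n \in U$ for some $U \in \mathcal{U}$'', covers the ultraproduct case, which is consistent with the introduction's indication that the converse direction is the subtle one and requires further hypotheses.
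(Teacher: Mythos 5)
Your proof is correct, and it follows the same core mechanism as the paper's: use the Neumann series to upgrade the asymptotic relation $\|b_na_n-1_n\|\to 0$ to an exact one-sided inverse, apply Dedekind-finiteness of $\mathcal{A}_n$ componentwise to the resulting exact idempotent $q_n\sim 1_n$, and finish with a perturbation estimate controlled by $\|A\|\,\|B\|$. (The paper works with the mirror-image convention $ab=1$, $ba=p$ and sets $q_n=b_n(a_nb_n)^{-1}a_n$, but this is the same trick.) Where you genuinely diverge is the endgame: the paper first lifts the idempotent $p$ to an exact idempotent $P=(p_n)$ in $\ell^\infty(\mathcal{A}_n)$ via Proposition~\ref{approxidemp}, and then concludes by showing $\|1_n-p_n\|<1$ eventually and invoking the fact that a nonzero idempotent has norm at least one (Remark~\ref{rem:c_di_defn}). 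You correctly observe that neither step is needed: since the goal is simply $\|p_n-1_n\|\to 0$ for an arbitrary representative $(p_n)$ of $p$, the chain $\|p_n-1_n\|\le\|p_n-a_nb_n\|+\|a_nb_n-1_n\|$ closes the argument directly, because $a_nb_n-1_n=a_n\bigl(1_n-(b_na_n)^{-1}\bigr)b_n$ once $a_n\tilde b_n=1_n$ is known. Your route is therefore leaner; what the paper's detour buys is the idempotent-lifting statement itself (idempotents in $\asy(\mathcal{A}_n)$ lift to idempotents in $\ell^\infty(\mathcal{A}_n)$), which the authors record as a remark of independent interest and reuse in Proposition~\ref{liftidempfromcorona}. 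Your closing comment about the ultrafilter version is also accurate and matches the paper's Theorem~\ref{dfposultra}.
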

\begin{proof}
Let $p \in \asy(\mathcal{A}_n)$ be an idempotent.  Choose $X =(x_n) \in
\ell^\infty(\mc A_n)$ with $\pi(X)=p$, so that $\pi(X^2) = \pi(X)^2 = p^2 = p = \pi(X)$, or
equivalently, $X-X^2 \in c_0(\mc A_n)$.  Let us introduce $\nu_n := \Vert x_n - x^2_n \Vert$ for every $n \in \mathbb{N}$, then $\lim_{n \rightarrow \infty} \nu_n =0$. In particular, there is $N \in \mathbb{N}$ such that for every $n \geq N$ we have $\nu_n < 1/8$. In view of Proposition~\ref{approxidemp}, for every $n \geq N$ there is an idempotent $p'_n \in \mathcal{A}_n$ with $\Vert x_n - p'_n \Vert \leq f_{\Vert x_n \Vert}(\nu_n) \leq f_{\Vert X \Vert}(\nu_n) \leq f_{\Vert X \Vert}(1/8)$. By continuity of $f_{\Vert X \Vert}$, it follows that $\lim_{n \geq N} f_{\Vert X \Vert}(\nu_n) =0$; consequently $\lim_{n \geq N} \Vert x_n - p'_n \Vert =0$. 
For every $n \in \mathbb{N}$ we define
\begin{align}
p_n := \left\{
\begin{array}{l l}
p'_n & \quad \text{if  } n \geq N \\
0 & \quad \text{otherwise.} \\
\end{array} \right.
\end{align}
Since $\Vert p'_n \Vert \leq \Vert p'_n -x_n \Vert + \Vert x_n \Vert \leq f_{\Vert X \Vert}(1/8) + \Vert X \Vert$ for all $n \geq N$, it follows that $P:=(p_n)$ is an idempotent in $\ell^\infty(\mc A_n)$.  We observe that $p= \pi(P)$ by $\lim_{n \geq N} \Vert x_n - p'_n \Vert =0$.

Now suppose further that $p \sim 1$, so there exist $a,b \in \asy(\mathcal{A}_n)$ such that $1=ab$ and $p=ba$.  There are $A =(a_n)$, $B=(b_n) \in \ell^\infty(\mc A_n)$ such that $a= \pi(A)$ and $b = \pi(B)$, consequently $\lim_{n \rightarrow \infty} \Vert 1_n - a_n b_n \Vert =0$ and $\lim_{n \rightarrow \infty} \Vert p_n - b_n a_n \Vert =0$.

Now let $\delta \in (0,1)$ be such that
\begin{align}
\Vert A \Vert \Vert B \Vert \delta/(1- \delta) +2 \delta <1.
\end{align}
Let $M \geq N$ be such that for all $n \geq M$ the inequality $\Vert 1_n - a_n b_n \Vert < \delta$ holds, then $u_n:= a_n b_n \in \inv(\mathcal{A}_n)$ with $\Vert 1_n - u_n^{-1} \Vert < \delta/(1- \delta)$. For every $n \geq M$, let $q_n := b_n u_n^{-1} a_n$, then $q_n \in \mathcal{A}_n$ is an idempotent with $q_n \sim 1_n$. Since $\mathcal{A}_n$ is Dedekind-finite, it follows for all $n \geq M$ that $q_n = 1_n$.

We need to show that $p=1$ holds, which is equivalent to showing $\lim_{n \rightarrow \infty} \Vert 1_n - p_n \Vert =0$. Since $1_n - p_n \in \mathcal{A}_n$ is an idempotent for all $n \in \mathbb{N}$, it is enough to show that eventually $\Vert 1_n - p_n \Vert < 1$, compare
Remark~\ref{rem:c_di_defn} below.  Let $K \geq M$ be such that for every $n \geq K$ 
\begin{align}
\Vert x_n - b_n a_n \Vert < \delta, \quad \Vert x_n - p'_n \Vert < \delta.
\end{align}
Then for every $n \geq K$ we have $p_n = p'_n$ and $1_n = q_n$, thus
\begin{align}
\Vert 1_n - p'_n \Vert &= \Vert q_n - p'_n \Vert \notag \\
&= \Vert b_n u_n^{-1} a_n - p'_n \Vert \notag \\
&\leq \Vert b_n u_n^{-1} a_n - b_n a_n \Vert + \Vert b_n a_n - x_n \Vert + \Vert x_n - p'_n \Vert \notag \\
&\leq \Vert b_n \Vert \Vert u_n^{-1} -1_n \Vert \Vert a_n \Vert + \Vert b_n a_n - x_n \Vert + \Vert x_n - p'_n \Vert \notag \\
&\leq \Vert A \Vert \Vert B \Vert \delta / (1- \delta) + 2 \delta \notag \\
& < 1.
\end{align}
This concludes the proof.
\end{proof}

\begin{Rem}
We note that the proof above gives some extra information.  Indeed, the first paragraph of the proof of Theorem \ref{dfpositive} shows precisely that idempotents from the asymptotic sequence algebra $\asy(\mc A_n)$ can always be lifted to idempotents in $\ell^{\infty}(\mc A_n)$.
\end{Rem}

\subsection{When the asymptotic sequence algebra is Dedekind-finite.}
\label{sec:di}

In this section we demonstrate that the converse of Theorem~\ref{dfpositive} holds for certain specific cases; but in general it does not hold, which we show by way of a counter-example.

In order to do this, let us introduce the following auxiliary quantity.  For a unital Banach algebra $\mathcal{A}$, we define
\begin{align}
C_{\text{DI}}(\mathcal{A}):= \inf \lbrace \Vert a \Vert \Vert b \Vert : \; a,b \in \mathcal{A} \text{ and } ab= 1 \text{ and } ba \neq 1 \rbrace.
\end{align}
We may also introduce the auxiliary constant
\begin{align}
C'_{\text{DI}}(\mathcal{A}) := \inf \lbrace \Vert p \Vert : \; p \in \mathcal{A} , p^2= p, p\sim 1, \text{ and } p \neq 1 \rbrace.
\end{align}
If $\mathcal{A}$ is Dedekind-infinite then $1 \leq C_{\text{DI}}(\mathcal{A}) < + \infty$, otherwise (that is, if $\mathcal{A}$ is Dedekind-finite) $C_{\text{DI}}(\mathcal{A}) = + \infty$, and similarly
for $C'_{\text{DI}}$.
Clearly $C'_{\text{DI}}(\mathcal{A}) \leq C_{\text{DI}}(\mathcal{A})$, but otherwise these quantities
are not comparable, see Proposition~\ref{prop:2}.  As a definition, perhaps
$C'_{\text{DI}}(\mathcal{A})$ seems more natural, but we shall see that $C_{\text{DI}}(\mathcal{A})$
is more useful in constructions.

\begin{Rem}\label{rem:c_di_defn}
If $p\in \mathcal{A}$ is an idempotent, then $p^n=p$ for any $n\in\mathbb N$, and so $\|p\|\geq 1$ or $p=0$.  Suppose we have $a,b\in \mathcal{A}$ with $ab=1$.  Then $p=ba$ is an idempotent, and hence so is $1-p$, hence either $p=1$ or $\|1-p\|\geq 1$.  So in the definition of $C_{\text{DI}}$ we also have that $ba$ is far away from $1$.
\end{Rem}

\begin{Prop}\label{diconstant}
Let $(\mathcal{A}_n)$ be a sequence of unital Banach algebras such that $C_{\text{DI}}(\mathcal{A}_n) \rightarrow + \infty$. Then $\asy (\mathcal{A}_n)$ is Dedekind-finite.
\end{Prop}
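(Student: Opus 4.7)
The plan is to show directly that any idempotent $p\in\asy(\mathcal{A}_n)$ which is equivalent to the unit must actually equal the unit. The key idea is to lift the equivalence $p\sim 1$ to the sequence level, use the fact that the lifts $(a_n),(b_n)$ give an approximate inverse pair whose norms are bounded by $\|A\|\|B\|$, and then invoke the divergence of $C_{\text{DI}}(\mathcal{A}_n)$ to force exact one-sided inverses to become two-sided for large $n$. Note that, in contrast with the proof of Theorem~\ref{dfpositive}, we will not need to lift $p$ itself to an idempotent in $\ell^\infty(\mathcal{A}_n)$: we can work entirely with $A$ and $B$.

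Concretely, I would proceed as follows. Suppose $p\in\asy(\mathcal{A}_n)$ is idempotent with $p\sim 1$, so there are $a,b\in\asy(\mathcal{A}_n)$ with $ab=1$ and $ba=p$. Choose lifts $A=(a_n),B=(b_n)\in\ell^\infty(\mathcal{A}_n)$ of $a$ and $b$, so that $u_n:=a_nb_n$ satisfies $\|u_n-1_n\|\to 0$. For $n$ large enough, $u_n\in\inv(\mathcal{A}_n)$ with $\|u_n^{-1}\|\to 1$ by the Neumann series. Set $c_n:=b_nu_n^{-1}$, which gives $a_nc_n=1_n$, and observe that
\[ \|a_n\|\|c_n\|\leq \|A\|\|B\|\,\|u_n^{-1}\|, \]
which is eventually bounded by, say, $2\|A\|\|B\|$.

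Here comes the main step: since $C_{\text{DI}}(\mathcal{A}_n)\to +\infty$, for all sufficiently large $n$ we have $\|a_n\|\|c_n\|<C_{\text{DI}}(\mathcal{A}_n)$. By the very definition of $C_{\text{DI}}(\mathcal{A}_n)$, this forces $c_na_n=1_n$; that is, $a_n$ is actually invertible in $\mathcal{A}_n$ with inverse $c_n$. The final bookkeeping is then routine:
\[ b_na_n-1_n=(b_n-c_n)a_n=b_n(1_n-u_n^{-1})a_n, \]
whose norm is bounded by $\|b_n\|\|u_n^{-1}\|\|u_n-1_n\|\|a_n\|\leq 2\|A\|\|B\|\,\|u_n-1_n\|\to 0$. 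Therefore $\pi(BA)=1$, i.e.\ $p=ba=1$ in $\asy(\mathcal{A}_n)$, which shows Dedekind-finiteness.

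I do not expect any real obstacle: the argument is essentially a norm-controlled reformulation of the familiar ``if $ab=1$ then $ba$ is an idempotent with $ba\sim 1$'' trick, combined with the hypothesis on $C_{\text{DI}}$ to rule out the Dedekind-infinite possibility $ba\ne 1$ at each level of the sequence. The only point that requires a bit of care is making sure the bound on $\|a_n\|\|c_n\|$ really is uniform (independent of $n$), which is immediate from $\|u_n^{-1}\|\to 1$.
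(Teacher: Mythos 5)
Your argument is correct, and its central mechanism is the same as the paper's: invert $u_n=a_nb_n$ by the Neumann series, observe that $a_n(b_nu_n^{-1})=1_n$ with $\|a_n\|\,\|b_nu_n^{-1}\|$ uniformly bounded, and use $C_{\text{DI}}(\mathcal{A}_n)\to+\infty$ to force $(b_nu_n^{-1})a_n=1_n$ for large $n$ (the paper's $q_n$ is exactly your $c_na_n$). Where you diverge is the endgame: the paper lifts the idempotent $\pi(B)\pi(A)$ to an idempotent sequence $P=(p_n)$ via Proposition~\ref{approxidemp} and then invokes the fact that two distinct idempotents are at distance at least $1$ (Remark~\ref{rem:c_di_defn}) to conclude $p_n=1_n$ eventually, whereas you simply estimate $\|b_na_n-1_n\|=\|b_n(1_n-u_n^{-1})a_n\|\leq \|A\|\,\|B\|\,\|u_n^{-1}\|\,\|u_n-1_n\|\to 0$ directly. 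Your ending is shorter and entirely elementary, needing neither the approximate-idempotent lemma nor the idempotent-separation fact; what you give up is only the side information the paper's route makes explicit, namely that the idempotent $\pi(B)\pi(A)$ lifts to a genuine idempotent in $\ell^\infty(\mathcal{A}_n)$. Both proofs are valid; yours is arguably the cleaner one for this particular statement.
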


\begin{proof}
Let $A= (a_n), B=(b_n) \in \ell^\infty(\mc A_n)$ be such that $\pi(A)\pi(B) = 1$.  We wish to prove
that $\pi(B)\pi(A)=1$.  By the assumption we can take $N' \in \mathbb{N}$ such that $C_{\text{DI}}(\mathcal{A}_n) > 2 \Vert A \Vert \Vert B \Vert + 1$ whenever $n \geq N'$. Let us define $u_n := a_n b_n$ for every $n \in \mathbb{N}$. Since $\lim_{n \rightarrow \infty} \Vert 1_n - u_n \Vert =0$, we can pick $N \geq N'$ such that $\Vert 1_n -u_n \Vert < (2 \Vert A \Vert \Vert B \Vert +1)^{-1}$ for all $n \geq N$. Then $u_n \in \inv(\mathcal{A}_n)$ and $\Vert 1_n - u_n^{-1} \Vert \leq (2 \Vert A \Vert \Vert B \Vert)^{-1}$ and $\Vert u_n^{-1} \Vert \leq (2 \Vert A \Vert \Vert B \Vert + 1)(2 \Vert A \Vert \Vert B \Vert)^{-1}$ for all $n \geq N$.
	
Let us define $q_n := b_n u_n^{-1} a_n$ for every $n \geq N$.  As $a_n (b_nu_n^{-1}) = 1_n$ it follows that $q_n \in \mathcal{A}_n$ is an idempotent with $q_n \sim 1_n$.  Clearly, either $q_n =1_n$ or $q_n \neq 1_n$. Assume towards a contradiction that there is some $m\geq N$ with $q_m \neq 1_m$. Then
\begin{align}
C_{\text{DI}}(\mathcal{A}_m) \leq \Vert b_m u_m^{-1} \Vert \Vert a_m \Vert \leq \Vert b_m \Vert \Vert u_m^{-1} \Vert \Vert a_m \Vert \leq (2 \Vert A \Vert \Vert B \Vert +1)/2,
\end{align}
which is impossible. Hence $q_n = 1_n$ for all $n \geq N$.
	
From $\pi(A)\pi(B) = 1$ it follows that $\pi(B) \pi(A) \in \asy(\mathcal{A}_n)$ is an idempotent, which is equivalent to saying that $\lim_{n \rightarrow \infty} \Vert b_n a_n b_n a_n - b_n a_n \Vert =0$. Let $M' \geq N $ be such that $\nu_n := \Vert b_n a_n b_n a_n - b_n a_n \Vert < 1/8$ whenever $n \geq M'$. By Proposition~\ref{approxidemp} there is an idempotent $p'_n \in \mathcal{A}_n$ such that $\Vert b_n a_n - p_n' \Vert \leq f_{\Vert A \Vert \Vert B \Vert}(\nu_n)$, whenever $n \geq M'$. Let $p_n := p'_n$ if $n \geq M'$ and $p_n := 1_n$ if $n < M'$. Then $P:=(p_n) \in\ell^\infty(\mathcal{A}_n)$ is an idempotent with $\pi(B) \pi(A) = \pi(P)$.
	
Let $M \geq M'$ be such that $\Vert b_n a_n -p_n \Vert < 1/2$ for all $n \geq M$. Thus we obtain
\begin{align}
\Vert 1_n - p_n \Vert &= \Vert q_n -p_n \Vert \leq \Vert q_n - b_n a_n \Vert + \Vert b_n a_n - p_n \Vert \notag \\
&\leq \Vert b_n \Vert \Vert u_n^{-1} - 1_n \Vert \Vert a_n \Vert + \Vert b_n a_n - p_n \Vert \notag \\
&< \frac{\Vert A \Vert \Vert B \Vert}{2 \Vert A \Vert \Vert B \Vert} + 1/2 \notag \\
&=1
\end{align}
and hence, by Remark~\ref{rem:c_di_defn}, $1_n = p_n$ for all $n \geq M$.  This yields $\pi(B) \pi(A) = \pi(P) = \pi(1)$, showing that $\asy (\mathcal{A}_n)$ is Dedekind-finite.
\end{proof}

In particular as $C'_{\text{DI}}(\mathcal{A}) \leq C_{\text{DI}}(\mathcal{A})$ and $C'_{\text{DI}}(\mathcal{A}) < + \infty$ if and only if $C_{\text{DI}}(\mathcal{A}) < + \infty$, we immediately obtain the following.
	
\begin{Cor}\label{altdiconstant}
	Let $(\mathcal{A}_n)$ be a sequence of unital Banach algebras such that $C'_{\text{DI}}(\mathcal{A}_n) \rightarrow + \infty$. Then $\asy (\mathcal{A}_n)$ is Dedekind-finite.
\end{Cor}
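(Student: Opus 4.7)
The plan is very short, because the statement is flagged by the authors as immediate from Proposition~\ref{diconstant}. The only input needed is the inequality $C'_{\text{DI}}(\mathcal{A}) \leq C_{\text{DI}}(\mathcal{A})$, together with the observation that this inequality is preserved when one or both sides are $+\infty$ (with the usual convention that an infimum over an empty set is $+\infty$, corresponding to the Dedekind-finite case).

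First I would verify the inequality $C'_{\text{DI}}(\mathcal{A}) \leq C_{\text{DI}}(\mathcal{A})$ for any unital Banach algebra $\mathcal{A}$. Given any pair $a,b \in \mathcal{A}$ with $ab = 1$ and $ba \neq 1$, set $p := ba$. Then $p^2 = b(ab)a = ba = p$, so $p$ is an idempotent; $p \sim 1$ is witnessed by the pair $(b,a)$ since $p = ba$ and $1 = ab$; and by hypothesis $p \neq 1$. Thus $p$ is admissible in the infimum defining $C'_{\text{DI}}(\mathcal{A})$, so $C'_{\text{DI}}(\mathcal{A}) \leq \|p\| = \|ba\| \leq \|a\|\|b\|$. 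Taking the infimum over all admissible pairs $(a,b)$ yields $C'_{\text{DI}}(\mathcal{A}) \leq C_{\text{DI}}(\mathcal{A})$.

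Next, under the hypothesis $C'_{\text{DI}}(\mathcal{A}_n) \to +\infty$, the inequality just established gives $C_{\text{DI}}(\mathcal{A}_n) \geq C'_{\text{DI}}(\mathcal{A}_n) \to +\infty$. An appeal to Proposition~\ref{diconstant} then concludes that $\asy(\mathcal{A}_n)$ is Dedekind-finite. There is no real obstacle here; the proof is purely a matter of chaining the inequality with the previously established proposition.
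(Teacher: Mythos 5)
Your proof is correct and is essentially identical to the paper's: the authors also deduce the corollary immediately from Proposition~\ref{diconstant} via the inequality $C'_{\text{DI}}(\mathcal{A}) \leq C_{\text{DI}}(\mathcal{A})$, which they establish by the same observation that $p = ba$ is an idempotent equivalent to $1$ with $\|p\| \leq \|a\|\|b\|$. No issues.
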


\subsection{A counter-example}
We shall now construct Banach algebras which satisfy the hypothesises of Proposition~\ref{diconstant}.

Let $I$ be a non-empty set. For a fixed $s \in I$, $\delta_s$ denotes the function
\begin{align}
\delta_s(t) := \left\{
\begin{array}{l l}
1 & \quad \text{if  } t=s, \\
0 & \quad \text{if } t \neq s. \\
\end{array} \right.
\end{align}
Let $\nu: \, I \rightarrow (0, + \infty)$ be a function. We define
\begin{align}
\ell^1(I, \nu) := \left\lbrace f: \; I \rightarrow \mathbb{C} : \; \Vert f \Vert_{\nu} := \sum\limits_{s \in I} \vert f(s) \vert \nu(s) < + \infty \right\rbrace.
\end{align}
We have that $\ell^1(I, \nu) = \overline{\spanning \{ \delta_s : \, s \in I\}}^{\| \cdot \|_{\nu}}$, and further we can write $f= \sum_{s \in I} f(s) \delta_s$ for each $f \in \ell^1(I, \nu)$ where the sum converges in the norm $\| \cdot \|_{\nu}$.  It is easy to see that $(\ell^1(I, \nu), \Vert \cdot \Vert_{\nu})$ is a Banach space. In line with the general convention, we will simply write $\ell^1(I, \nu)$ for this Banach space, and $\ell^1(I)$ whenever $\nu =1$.

When $I$ is a monoid, there is a canonical way of turning $\ell^1(I, \nu)$ into a unital Banach algebra.
Slightly more generally, let $S$ be a semigroup.  Let $\omega: \, S \rightarrow (0, + \infty)$ be a \textit{weight} on $S$, that is, we require $\omega(st) \leq \omega(s) \omega(t)$ to hold for all $s,t \in S$. In addition, when $S$ is a monoid with multiplicative identity $e \in S$ then we also require $\omega(e) = 1$.  We remark that any weight is equivalent to one satisfying this normalisation
condition.  We define the usual \emph{convolution product} on $\ell^1(S, \omega)$ by
\begin{align}
(f \ast g)(r) := \sum\limits_{st=r} f(s)g(t) \quad (f,g \in \ell^1(S,\omega), \, r \in S),
\end{align}
then $(\ell^1(S,\omega), \ast)$ is a Banach algebra (one uses the condition on the weight to show
that the norm is submultiplicative.)   When $S$ is a monoid, $(\ell^1(S,\omega), \ast)$ becomes a unital Banach algebra with unit $\delta_e$, clearly $\Vert \delta_e \Vert_\omega =1$ holds.

In what follows, we shall mostly be interested in weights $\omega$ with $\omega(s)\geq 1$ for all $s$.
Notice then that $\ell^1(S,\omega)$ becomes a (in general, not closed) subalgebra of $\ell^1(S)$.

\begin{Prop}\label{altpredfcounterex}
Let $S$ be a monoid with unit $e \in S$ and let $\omega: \, S \rightarrow [1, + \infty)$ be a weight on $S$. Let $p \in (\ell^1(S, \omega), \ast)$ be a non-zero idempotent such that $p \neq \delta_e$. Then
\begin{align}
\Vert p \Vert_{\omega} \geq \frac{1}{2} \inf \left\lbrace \omega(s): \, s \in S,
s\not=e \right\rbrace.
\end{align}
\end{Prop}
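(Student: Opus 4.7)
The plan is to decompose $p$ around the unit and extract a non-trivial idempotent-like relation supported off $e$, which forces a large $\ell^1$-mass outside $e$; since every element of that support is weighted by at least $m := \inf\{\omega(s) : s \neq e\}$, the weighted norm inherits the factor $m$.

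Concretely, I would first write $p = c \delta_e + q$, where $c = p(e) \in \mathbb{C}$ and $q \in \ell^1(S,\omega)$ is supported on $S \setminus \{e\}$.  A quick check shows that $q \neq 0$: if $q = 0$ then $p = c\delta_e$ and $p^2 = p$ forces $c \in \{0,1\}$, contradicting $p \neq 0$ and $p \neq \delta_e$.  Next, expanding $p^2 = p$ using $\delta_e \ast q = q \ast \delta_e = q$ yields the crucial identity
\begin{align*}
q \ast q = c(1-c)\delta_e + (1-2c)q.
\end{align*}
Since $q$ is supported off $e$, the two terms on the right have disjoint support, so taking the \emph{unweighted} $\ell^1$-norm gives an equality rather than merely an inequality:
\begin{align*}
\|q \ast q\|_1 = |c(1-c)| + |1-2c|\,\|q\|_1.
\end{align*}

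The main work is then to combine this with submultiplicativity $\|q \ast q\|_1 \leq \|q\|_1^2$, obtaining the quadratic inequality $\|q\|_1^2 - |1-2c|\,\|q\|_1 - |c(1-c)| \geq 0$, whence
\begin{align*}
\|q\|_1 \geq \tfrac{1}{2}\bigl(|1-2c| + \sqrt{|1-2c|^2 + 4|c(1-c)|}\bigr).
\end{align*}
The step I expect to require the most care is showing that the radicand is at least $1$ for arbitrary complex $c$.  Writing $u = 1-c$ and $v = c$, so that $u+v = 1$ and $u-v = 1-2c$, one expands $|u-v|^2 + 4|u||v| - (|u|+|v|)^2 = 2(|u\bar v| - \mathrm{Re}(u\bar v)) \geq 0$, and then uses $(|u|+|v|)^2 \geq |u+v|^2 = 1$.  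This gives $\sqrt{|1-2c|^2 + 4|c(1-c)|} \geq 1$, and hence $\|q\|_1 \geq 1/2$.

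Finally, since $\omega(s) \geq m$ for every $s \neq e$ and $q$ is supported there,
\begin{align*}
\|p\|_\omega \;\geq\; \|q\|_\omega \;=\; \sum_{s \neq e} |q(s)|\,\omega(s) \;\geq\; m \sum_{s \neq e}|q(s)| \;=\; m\,\|q\|_1 \;\geq\; \tfrac{m}{2},
\end{align*}
which is the claimed bound.  The only subtle point beyond bookkeeping is the elementary but non-obvious estimate $|1-2c|^2 + 4|c(1-c)| \geq 1$ for complex $c$, which is what makes the argument work uniformly without separating real and non-real cases for $p(e)$.
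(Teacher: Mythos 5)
Your argument is correct, and the overall skeleton (isolate the unweighted $\ell^1$-mass of $p$ off $e$, show it is at least $1/2$, then weight it by $m=\inf_{s\neq e}\omega(s)$) matches the paper's; but the mechanism you use for the key estimate $\sum_{s\neq e}|p(s)|\geq 1/2$ is genuinely different. The paper argues that a non-zero idempotent $p\neq\delta_e$ cannot be invertible in $\ell^1(S)$, so by the Neumann series $\|\delta_e-p(e)^{-1}p\|\geq 1$ when $p(e)\neq 0$, which yields $|p(e)|\leq\sum_{s\neq e}|p(s)|$; combined with $\|p\|_1\geq 1$ (valid for any non-zero idempotent) this gives the factor $1/2$. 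You instead extract from $p^2=p$ the identity $q\ast q=c(1-c)\delta_e+(1-2c)q$ for the off-$e$ part $q$, exploit the disjoint supports to turn the unweighted norm of the right-hand side into an exact sum, and close with submultiplicativity plus the elementary inequality $|1-2c|^2+4|c(1-c)|\geq 1$ for complex $c$ (your verification of which is correct, as is the passage from the quadratic inequality to the lower bound on $\|q\|_1$, using $q\neq 0$ to rule out the degenerate root when $c(1-c)=0$). Your route avoids any appeal to invertibility or to $\|p\|_1\geq 1$, at the cost of the complex-number lemma; it also gives a slightly sharper bound $\|q\|_1\geq\tfrac12(|1-2c|+1)$ when $p(e)\neq\tfrac12$, though this extra precision is not needed for the proposition. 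Both proofs are equally elementary; the paper's is shorter because it delegates the work to two standard facts about idempotents in Banach algebras.
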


\begin{proof}
Since $(\ell^1(S, \omega), \ast)$ is a subalgebra of $(\ell^1(S), \ast)$, we have that $p \in (\ell^1(S), \ast)$. Assume first that $p(e) \neq 0$.  We claim that then $\Vert \delta_e -  (p(e))^{-1} p \Vert \geq 1$. Indeed otherwise $\Vert \delta_e -  (p(e))^{-1} p \Vert < 1$ and thus $(p(e))^{-1}p$, and so $p$, are invertible in $(\ell^1(S), \ast)$, which is impossible as $p$ is an idempotent different from $\delta_e$. Consequently
\begin{align}
\vert p(e) \vert \leq \vert p(e) \vert \left\Vert \delta_e -  \frac{1}{p(e)} p \right\Vert = \sum\limits_{s \in S \setminus \lbrace e \rbrace} \vert p(s) \vert.
\end{align}
If $p(e) =0$ then the above inequality obviously holds. As $p$ is an idempotent, we have $1 \leq \Vert p \Vert$, and this yields
\begin{align}
1 \leq \Vert p \Vert = \vert p(e) \vert + \sum\limits_{s \in S \setminus \lbrace e \rbrace} \vert p(s) \vert \leq 2  \left( \sum\limits_{s \in S \setminus \lbrace e \rbrace} \vert p(s) \vert \right).
\end{align}
From this we conclude
\begin{align}
\Vert p \Vert_{\omega} &= \sum\limits_{s \in S} \vert p(s) \vert \omega(s) \geq \sum\limits_{s \in S \setminus \lbrace e \rbrace} \vert p(s) \vert \omega(s) \notag \\
&\geq \inf\limits_{s \in S \setminus \lbrace e \rbrace} \omega(s) \sum\limits_{s \in S \setminus \lbrace e \rbrace} \vert p(s) \vert \notag
\geq \frac{1}{2} \inf\limits_{s \in S \setminus \lbrace e \rbrace} \omega(s).
\end{align}
\end{proof}

In what follows $BC$ denotes the bicyclic monoid, which is the free monoid generated by elements $p,q$
subject to the single relation that $pq=e$:
\begin{align}
BC = \langle p, q : \; pq= e \rangle.
\end{align}
Fix $n \in \mathbb{N}$. Let $\omega_n : \, BC \rightarrow [1, + \infty)$ be the weight on $BC$ defined as $\omega_n(e) = 1$ and $\omega_n(s) = n$ for $s \in BC \setminus \lbrace e \rbrace$.

\begin{Thm}\label{thm:1}
Let $\mathcal{A}_n := (\ell^1(BC, \omega_n), \ast)$ for every $n \in \mathbb{N}$.
Then $(\mathcal{A}_n)$ is a sequence of Dedekind-infinite Banach algebras such that $\asy(\mathcal{A}_n)$ is Dedekind-finite. 
\end{Thm}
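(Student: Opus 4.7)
The plan is to exhibit, for each $n$, an explicit Dedekind-infiniteness witness in $\mathcal{A}_n$, and then use the weighted structure to force $C'_{\textrm{DI}}(\mathcal{A}_n) \to +\infty$, at which point Corollary~\ref{altdiconstant} delivers the conclusion that $\asy(\mathcal{A}_n)$ is Dedekind-finite.

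First, each $\mathcal{A}_n$ is Dedekind-infinite. In the bicyclic monoid, $pq=e$ but $qp\neq e$. Hence in $\mathcal{A}_n$ we have $\delta_p \ast \delta_q = \delta_e$ (the unit of $\mathcal{A}_n$) while $\delta_q \ast \delta_p = \delta_{qp} \neq \delta_e$, so $\delta_p,\delta_q$ witness Dedekind-infiniteness and the idempotent $\delta_q \ast \delta_p$ is equivalent to, but distinct from, the unit.

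Next, I claim $C'_{\textrm{DI}}(\mathcal{A}_n) \geq n/2$. Suppose $p' \in \mathcal{A}_n$ is an idempotent with $p' \sim \delta_e$ and $p' \neq \delta_e$, say $p' = y\ast x$ and $\delta_e = x \ast y$. Then $p'$ is nonzero: indeed, if $p' = 0$ then $y = \delta_e \ast y = (x \ast y)\ast y = x\ast (y\ast y) \cdot$(rearrangement)$\,$— more directly, $y = y\ast(x\ast y) = (y\ast x)\ast y = p' \ast y = 0$, forcing $\delta_e = x\ast y = 0$, a contradiction. So Proposition~\ref{altpredfcounterex} applies to $p'$ and gives
\[
\|p'\|_{\omega_n} \;\geq\; \frac{1}{2} \inf_{s\in BC\setminus\{e\}} \omega_n(s) \;=\; \frac{n}{2}.
\]
Taking the infimum over such $p'$ yields $C'_{\textrm{DI}}(\mathcal{A}_n) \geq n/2$, so $C'_{\textrm{DI}}(\mathcal{A}_n) \to +\infty$. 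Corollary~\ref{altdiconstant} then concludes that $\asy(\mathcal{A}_n)$ is Dedekind-finite.

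The main conceptual point — that inflating the weight on the non-identity elements of $BC$ simultaneously preserves Dedekind-infiniteness (the witnesses $\delta_p,\delta_q$ are always available) while making every nontrivial idempotent expensive — is already encoded in Proposition~\ref{altpredfcounterex}, so no further obstacle arises. The only delicate bookkeeping step is checking that an idempotent equivalent to the unit cannot itself be zero; this is the one-line computation above exploiting the relation $y\ast(x\ast y) = (y\ast x)\ast y$.
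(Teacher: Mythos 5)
Your proposal is correct and follows essentially the same route as the paper: exhibit $\delta_p, \delta_q$ as the Dedekind-infiniteness witness, then apply Proposition~\ref{altpredfcounterex} to bound $C'_{\textrm{DI}}(\mathcal{A}_n)$ from below by $n/2$ and invoke Corollary~\ref{altdiconstant}. Your explicit check that an idempotent equivalent to the unit is nonzero (needed to apply Proposition~\ref{altpredfcounterex}) is a small point the paper leaves implicit, but it does not change the argument.
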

\begin{proof}
For any $n\in\mathbb N$, work in $\mc A_n$, and consider $h:= \delta_q \ast \delta_p$.  Then $h$ is an idempotent with $h \sim \delta_e$ and $h \neq \delta_e$.  Indeed, $\delta_p \ast \delta_q = \delta _{pq} = \delta_e$ and $\delta_q \ast \delta_p = \delta_{qp} \neq \delta_e$.  This in particular shows that $\mathcal{A}_n$ is Dedekind-infinite.

Now let $h \in \mathcal{A}_n$ be an arbitrary idempotent such that $h \sim \delta_e$ and $h \neq \delta_e$.  We observe that Proposition~\ref{altpredfcounterex} yields $\Vert h \Vert_{\omega_n} \geq n/2$, and consequently $C'_{\text{DI}}(\mathcal{A}_n) \geq n / 2$.  In view of Corollary~\ref{altdiconstant} the Banach algebra $\asy(\mathcal{A}_n)$ is Dedekind-finite.
\end{proof}

In particular, this shows that the converse of Theorem~\ref{dfpositive} does not hold in general.
As we can vary finitely many of the $\mc A_n$ without changing $\asy(\mc A_n)$, by using the
contrapositive, we can alternatively state Theorem~\ref{dfpositive} as: if $\asy(\mc A_n)$ is
Dedekind-infinite, then infinitely many of the $\mc A_n$ are Dedekind-infinite.
If we add in control of $C_{\text{DI}}$ then we obtain a converse.

\begin{Prop}\label{prop:1}
Let $(\mathcal{A}_n)$ be a sequence of unital Banach algebras such that there exists $K>0$ and an increasing sequence $(n_k)$ in $\mathbb{N}$ with $C_{\text{DI}}(\mathcal{A}_{n_k}) \leq K$ for each $k \in \mathbb{N}$.  Then $\asy(\mathcal{A}_n)$ is Dedekind-infinite.
\end{Prop}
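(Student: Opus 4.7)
The plan is to construct explicit elements $a, b \in \asy(\mc A_n)$ witnessing Dedekind-infiniteness, by pulling back witnesses from the subsequence $(\mc A_{n_k})$ and filling in the remaining indices with the identity.

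First I would use the assumption $C_{\text{DI}}(\mc A_{n_k}) \leq K$ together with the definition of the infimum: for each $k$, pick $\alpha_k, \beta_k \in \mc A_{n_k}$ with $\alpha_k \beta_k = 1_{n_k}$, $\beta_k \alpha_k \neq 1_{n_k}$, and $\|\alpha_k\|\|\beta_k\| \leq K+1$. Since the conditions $\alpha_k\beta_k = 1$ and $\beta_k\alpha_k \neq 1$ are invariant under the rescaling $(\alpha_k, \beta_k) \mapsto (\lambda\alpha_k, \lambda^{-1}\beta_k)$, I can choose $\lambda$ so that $\|\alpha_k\| = \|\beta_k\|$, and hence both norms are bounded by $\sqrt{K+1}$.

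Next I would assemble sequences $A = (a_n), B = (b_n) \in \ell^\infty(\mc A_n)$ by setting $a_{n_k} := \alpha_k$, $b_{n_k} := \beta_k$ on the subsequence, and $a_n := 1_n$, $b_n := 1_n$ for all other indices $n$. By construction $\|A\|, \|B\| \leq \sqrt{K+1}$, and $a_n b_n = 1_n$ for \emph{every} $n$, so $AB = (1_n)_n$ in $\ell^\infty(\mc A_n)$ and hence $\pi(A)\pi(B) = 1$ in $\asy(\mc A_n)$.

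Finally I would argue that $\pi(B)\pi(A) \neq 1$. Since $a_nb_n = 1_n$ we have $(b_na_n)^2 = b_n(a_nb_n)a_n = b_na_n$, so each $b_na_n$ is an idempotent; whenever $b_na_n \neq 1_n$, Remark~\ref{rem:c_di_defn} gives $\|1_n - b_na_n\| \geq 1$. In particular this holds at each $n = n_k$, so
\[
\|1 - \pi(B)\pi(A)\| = \limsup_{n\to\infty}\|1_n - b_na_n\| \geq 1,
\]
whence $\pi(B)\pi(A) \neq 1$. Thus $\asy(\mc A_n)$ is Dedekind-infinite. There is no substantial obstacle here; the only mild subtlety is ensuring $A, B$ lie in $\ell^\infty$, which is handled by the rescaling in the first step, and recognising that the Remark~\ref{rem:c_di_defn} dichotomy forces a uniform lower bound on $\|1_n - b_na_n\|$ along the subsequence.
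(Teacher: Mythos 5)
Your proposal is correct and follows essentially the same route as the paper's own proof: select witnesses $a_{n_k}, b_{n_k}$ with $\|a_{n_k}\|\|b_{n_k}\|\leq K+1$, rescale to balance the norms, fill the remaining coordinates with $1_n$, and invoke Remark~\ref{rem:c_di_defn} to get $\|1_{n_k}-b_{n_k}a_{n_k}\|\geq 1$ along the subsequence, forcing $\pi(B)\pi(A)\neq 1$. No gaps; the extra observation that each $b_na_n$ is an idempotent is exactly the justification the paper leaves implicit in its citation of the Remark.
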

\begin{proof}
By assumption, for each $k \in \mathbb{N}$ we can find $a_{n_k},b_{n_k}\in \mathcal{A}_{n_k}$ with $\|a_{n_k}\| \|b_{n_k}\| \leq K+1$, say, and $a_{n_k} b_{n_k} = 1_{n_k}$ while $b_{n_k} a_{n_k}\not=1_{n_k}$.  We can rescale and suppose that $\|a_{n_k}\| = \|b_{n_k}\|$.  For $n \in \mathbb{N}$ not in the sequence, define $a_n=b_n=1_n$.
Then $A=(a_n) \in \ell^\infty(\mathcal{A}_n)$, and similarly for $B=(b_n)$, and clearly $\pi(A)\pi(B) = 1$.  By Remark~\ref{rem:c_di_defn}, we have that $\|b_{n_k} a_{n_k} - 1_{n_k} \|\geq 1$ for each $k$, and so $\pi(B)\pi(A)\not=1$.  Thus $\asy(\mathcal{A}_n)$ is Dedekind-infinite.
\end{proof}

Furthermore, under certain conditions, we do obtain a direct converse to Theorem~\ref{dfpositive}.

\begin{Cor}\label{cor:1}
Let $(\mathcal{A}_n)$ be a sequence of unital Banach algebras such that $\asy(\mathcal{A}_n)$ is Dedekind-finite.  Moreover, suppose that one of the following two conditions hold:
\begin{enumerate}
\item $\mathcal{A}_n = \mathcal{A}_m$ for every $n,m \in \mathbb{N}$;
\item $\mathcal{A}_n$ is a $C^*$-algebra for each $n \in \mathbb{N}$.
\end{enumerate}
Then there is $N\in\mathbb N$ such that $\mathcal{A}_n$ is Dedekind-finite for $n\geq N$.
\end{Cor}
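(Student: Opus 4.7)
The plan is to argue by contrapositive and invoke Proposition~\ref{prop:1}. Suppose that the set of indices $n$ for which $\mathcal{A}_n$ is Dedekind-infinite is infinite, and enumerate them in increasing order as $(n_k)$. It suffices to exhibit, in each of the two cases, a constant $K > 0$ such that $C_{\text{DI}}(\mathcal{A}_{n_k}) \leq K$ for all $k$; Proposition~\ref{prop:1} then produces Dedekind-infiniteness of $\asy(\mathcal{A}_n)$, contradicting the hypothesis.

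Case (1) is immediate: all $\mathcal{A}_n$ coincide with a single Dedekind-infinite algebra $\mathcal{A}$, so $K := C_{\text{DI}}(\mathcal{A}) < +\infty$ works.

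Case (2) is the main content, and the key point to establish is that every Dedekind-infinite $C^*$-algebra $\mathcal{A}$ satisfies $C_{\text{DI}}(\mathcal{A}) \leq 1$ (in fact $=1$, since $\|a\|\|b\| \geq \|ab\| = 1$). The plan there is: start with an idempotent $p \sim 1$ with $p \neq 1$; apply Lemma~\ref{lem:pr1} to replace $p$ by a projection $q$ with $p \sim q$ and $pq = q, qp = p$ (which forces $q \neq 1$, since $q = 1$ would yield $p = qp = 1$); then pass $q \sim p \sim 1$ through Lemma~\ref{lem:pr2} to upgrade this to Murray--von Neumann equivalence $q \approx 1$, yielding an isometry $v \in \mathcal{A}$ with $v^*v = 1$ and $vv^* = q \neq 1$. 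Setting $a := v^*$, $b := v$ gives $ab = 1$, $ba \neq 1$, and $\|a\|\|b\| = 1$, whence $C_{\text{DI}}(\mathcal{A}) \leq 1$. Taking $K := 1$ and applying Proposition~\ref{prop:1} finishes the argument.

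The crux is the $C^*$-algebraic phenomenon that Dedekind-infiniteness automatically delivers a \emph{non-unitary isometry}, and hence a \emph{uniform} bound on $C_{\text{DI}}$. This feature is precisely what is absent in the general Banach algebraic setting: Theorem~\ref{thm:1} produces Dedekind-infinite algebras $\mathcal{A}_n$ with $C_{\text{DI}}(\mathcal{A}_n) \to +\infty$, which blocks Proposition~\ref{prop:1} from being applied. Beyond this bookkeeping around Lemmas~\ref{lem:pr1} and~\ref{lem:pr2}, no further subtlety arises.
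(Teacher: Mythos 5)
Your proof is correct and follows essentially the same route as the paper: both cases reduce to Proposition~\ref{prop:1}, and the $C^*$-case rests on the observation that a Dedekind-infinite $C^*$-algebra has $C_{\text{DI}}=1$, which the paper extracts from Proposition~\ref{prop:pr4} (i.e.\ the existence of a non-unitary isometry) and you re-derive directly from Lemmas~\ref{lem:pr1} and~\ref{lem:pr2}. One cosmetic slip: with the arrangement $pq=q$, $qp=p$, the implication $q=1\Rightarrow p=1$ comes from $p=pq=q=1$, not from the relation $qp=p$.
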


\begin{proof}
If $\mathcal{A}_n= \mathcal{A}$ for each $n$, then if $\mathcal{A}$ is Dedekind-infinite, then $C_{\text{DI}}(\mathcal{A})<\infty$ and so Proposition~\ref{prop:1} shows that $\asy(\mathcal{A}_n)$ is Dedekind-infinite, a contradiction.

Consider now a $C^*$-algebra $\mathcal{B}$.  By Proposition~\ref{prop:pr4}, we know that $\mc B$ is
Dedekind-finite if and only if it is Dedekind-finite in the $C^*$-algebraic sense, that is, if
$u$ is a partial isometry with $u^*u=1$ then $uu^*=1$.  It follows that $C_{\text{DI}}(\mathcal{B})=1$ or $+\infty$.  Thus, if each $\mathcal{A}_n$ is a $C^*$-algebra, and $\asy(\mathcal{A}_n)$ is Dedekind-finite, then Proposition~\ref{prop:1} shows that $C_{DI}(\mathcal{A}_n)=+\infty$ for all but finitely many $n$.  That is, eventually $\mathcal{A}_n$ is Dedekind-finite, as claimed.
\end{proof}

In the proof of Proposition~\ref{prop:1}, it seemed important to work with $C_{\text{DI}}$ and not
$C'_{\text{DI}}$.  In fact this is necessary, as we now show.
For the following, we need some simple combinatorics of
the monoid $BC$.  Any element of $BC$ can be written as a reduced word in the generators $p,q$,
which is necessarily of the form $q^\alpha p^\beta$ with $\alpha,\beta\in\mathbb N_0$.  The
multiplication law is that
\[ (q^\alpha p^\beta)(q^\gamma p^\delta) = \begin{cases}
q^\alpha p^{\delta+\beta-\gamma}  &\text{if } \beta\geq\gamma, \\
q^{\alpha-\beta+\gamma} p^\delta &\text{if } \beta\leq\gamma. \end{cases} \]
From this, it is easy to see that the set of idempotents in $BC$ is $BC_I=\{q^\alpha p^\alpha:
\alpha\geq 0\}$.  The following are also easy to see:
\begin{enumerate}
\item $(q^\alpha p^\alpha)(q^\beta p^\beta) = q^\gamma p^\gamma$ where $\gamma=\max(\alpha,\beta)$;
\item if $s\in BC_I, t\not\in BC_I$ then $st, ts\not\in BC_I$.
\end{enumerate}

By point (1) we see that $BC_I$ is a sub-semigroup of $BC$, and so we may consider $\ell^1(BC_I)$,
which can be identified with a closed subalgebra of $\ell^1(BC)$.

\begin{Lem}\label{lem:six}
Let $S:= \lbrace s_n \rbrace$ be a countable semigroup consisting of idempotents such that $s_n s_m = s_{\max(n,m)}$ for every $n,m \in \mathbb{N}$. Then $f \in \ell^1(S)$ is an idempotent
if and only if $f(s_n) \in \{-1,0,1\}$ for all $n$, there is an $n_0 \geq 0$ so that $f(s_n)=0$ for
$n\geq n_0$, and for all $m\geq 0$ we have that $\sum_{n=0}^m f(s_n) \in \{0,1\}$.
\end{Lem}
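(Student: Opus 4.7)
The plan is to translate idempotency into a condition on partial sums of the coefficients. Writing $f = \sum_{n=0}^\infty a_n \delta_{s_n}$ with $a_n = f(s_n)$, I would first exploit the multiplication law $s_n s_m = s_{\max(n,m)}$ to compute the convolution. The pairs $(n,m)$ with $\max(n,m)=k$ are exactly $(n,k)$ for $0\leq n\leq k$ together with $(k,m)$ for $0\leq m<k$, so that
\[
(f\ast f)(s_k)=a_k\!\sum_{n=0}^{k} a_n + a_k\!\sum_{m=0}^{k-1} a_m = a_k\bigl(T_k+T_{k-1}\bigr),
\]
where $T_m := \sum_{n=0}^m a_n$ and $T_{-1}:=0$. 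Hence $f$ is an idempotent if and only if, for every $k\geq 0$, either $a_k=0$ or $T_k+T_{k-1}=1$.

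For the forward direction, assume $f\ne 0$ is idempotent, and let $k_0<k_1<\cdots$ enumerate the indices where $a_k\neq 0$. Since $T_{k_i-1}=T_{k_{i-1}}$ (the partial sum is constant between consecutive supports), the recursion $T_{k_i}+T_{k_{i-1}}=1$ with the initial value $T_{k_0-1}=0$ forces $T_{k_i}$ to alternate between $1$ and $0$, whence $a_{k_i}=(-1)^i\in\{-1,1\}$, and on the flat stretches between supports $T_m\in\{0,1\}$. Membership in $\ell^1(S)$ together with $|a_n|\in\{0,1\}$ forces the support to be finite, giving the required $n_0$.

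For the converse, suppose $a_n\in\{-1,0,1\}$ with eventually vanishing support and all partial sums in $\{0,1\}$. I need to verify $a_k(T_k+T_{k-1}-1)=0$ case by case: if $a_k=0$ this is automatic; if $a_k=1$ then $T_k=T_{k-1}+1$, forcing $(T_{k-1},T_k)=(0,1)$, so $T_k+T_{k-1}=1$; if $a_k=-1$ then similarly $(T_{k-1},T_k)=(1,0)$ and again $T_k+T_{k-1}=1$. By the first paragraph, $f\ast f=f$.

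The only real obstacle is the convolution bookkeeping; once the identity $(f\ast f)(s_k)=a_k(T_k+T_{k-1})$ is in hand, the combinatorial characterisation is essentially forced, and the finite-support conclusion is immediate from $\ell^1$-summability of an integer-valued sequence.
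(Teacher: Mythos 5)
Your proposal is correct and follows essentially the same route as the paper: both reduce idempotency to the coefficient identity $f(t)=2f(t)\sum_{n<t}f(n)+f(t)^2$ (your $a_k(T_k+T_{k-1})=a_k$ is the same relation), prove the forward direction by induction on the partial sums, and the converse by the same case analysis. Your packaging of the induction via the alternation of $T_{k_i}$ along the support, with finiteness of the support then forced by $\ell^1$-summability, is a clean equivalent of the paper's strong induction.
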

\begin{proof}
Firstly, let $f$ be an idempotent.  To ease notation, let $e_n := \delta_{s_n}$ and $f(n):= f(s_n)$, so that
$f$ has the expansion $f = \sum_{n\geq 0} f(n) e_n$.  Then
\[ f^2 = \sum_{n,m\geq 0} f(n) f(m) e_{\max(n,m)} = f
\quad\implies\quad
\sum_{\max(n,m)=t} f(n) f(m) = f(t) \qquad (t\geq 0). \]
Hence $f(0)^2 = f(0)$ and so $f(0)\in\{0,1\}$.  We now use strong induction: suppose that
$f(n) \in \{-1,0,1\}$ for each $n \leq N$ and that $\sum_{n=0}^m f(n) \in \{0,1\}$ for
each $m\leq N$.  Then
\[ f(N+1) = 2f(N+1) \sum_{n=0}^N f(n) + f(N+1)^2. \]
Either $\sum_{n=0}^N f(n) =0$, in which case $f(N+1)\in\{0,1\}$, and so
$\sum_{n=0}^{N+1} f(n) \in \{0,1\}$; or alternatively $\sum_{n=0}^N f(n) = 1$
in which case $0 = f(N+1) + f(N+1)^2$ so $f(N+1)\in \{0,-1\}$ and so
$\sum_{n=0}^{N+1} f(n) \in \{0,1\}$.
Finally, as $\sum_{n=0}^\infty f(n)$ must converge, we must have that $f(n)=0$ eventually.

We now consider the converse.  Given such an $f$, we have that
\[ \sum_{\max(n,m)=t} f(n) f(m) = 2f(t)\sum_{n<t} f(n) + f(t)^2 \quad (t \geq 1). \]
Either $\sum_{n<t} f(n)=0$, in which case, as $\sum_{n \leq t} f(n) \in \{0,1\}$ we must have
that $f(t)\in\{0,1\}$, and so $f(t)^2 = f(t)$ as required; or $\sum_{n<t} f(n)=1$, in which case,
as $\sum_{n \leq t} f(n) \in \{0,1\}$ we must have
that $f(t)\in\{-1,0\}$, and so $2f(t)\sum_{n<t} f(n) + f(t)^2
= f(t)(2 + f(t)) = f(t)$ for either $f(t)=0$ or $f(t)=-1$, as required.
\end{proof}

\begin{Prop}\label{prop:8}
Let $\omega$ be a weight on $BC$ such that $\omega\geq 1$ and $\omega(s)\geq N$ for
each $s\not\in BC_I$.  Then $C_{\text{DI}}(\ell^1(BC,\omega)) \geq (N/86)^{1/3}$.
\end{Prop}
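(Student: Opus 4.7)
The plan is to exploit the structural fact that $BC_I = \{q^n p^n : n \geq 0\}$ is a \emph{commutative} subsemigroup of $BC$, because by the multiplication rule $(q^n p^n)(q^m p^m) = q^{\max(n,m)} p^{\max(n,m)}$ is symmetric in $n,m$. Hence $\ell^1(BC_I)$ embeds as a commutative subalgebra of $\ell^1(BC, \omega)$, while the weight hypothesis amplifies the norm of any element carrying mass on $BC \setminus BC_I$ by a factor of at least $N$. This suggests a strategy: any witness to $b*a \neq a*b$ is forced to involve mass off $BC_I$, which the weight penalises.

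Suppose $a,b \in \ell^1(BC,\omega)$ satisfy $a*b = \delta_e$ and $h := b*a \neq \delta_e$. Decompose $a = a_I + a_O$ and $b = b_I + b_O$ according to the partition $BC = BC_I \sqcup (BC \setminus BC_I)$. Since $a_I * b_I = b_I * a_I$ by commutativity of $\ell^1(BC_I)$, the commutator telescopes:
\[
\delta_e - h \;=\; a*b - b*a \;=\; (a_I*b_O - b_O*a_I) + (a_O*b_I - b_I*a_O) + (a_O*b_O - b_O*a_O).
\]
Now $h$ is an idempotent (as $h^2 = b(a*b)a = ba = h$), and since $h \neq \delta_e$, the element $\delta_e - h$ is a \emph{nonzero} idempotent in $\ell^1(BC)$ and so has $\ell^1$-norm at least $1$. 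The triangle inequality and submultiplicativity of the $\ell^1$-norm therefore yield
\[
1 \;\leq\; \|\delta_e - h\|_1 \;\leq\; 2\big(\|a_I\|_1 \|b_O\|_1 + \|b_I\|_1 \|a_O\|_1 + \|a_O\|_1 \|b_O\|_1\big).
\]

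Set $u = \|a_I\|_1$, $v = \|a_O\|_1$, $x = \|b_I\|_1$, $y = \|b_O\|_1$. Since $\omega \geq 1$ on $BC_I$ and $\omega \geq N$ on $BC \setminus BC_I$, we have $\|a\|_\omega \geq u + Nv$ and $\|b\|_\omega \geq x + Ny$. Expanding and discarding the nonnegative $ux$ term,
\[
\|a\|_\omega \|b\|_\omega \;\geq\; (u+Nv)(x+Ny) \;=\; ux + Nuy + Nvx + N^2 vy \;\geq\; N(uy + vx + vy) \;\geq\; N/2
\]
for $N \geq 1$. Taking the infimum over admissible pairs $(a,b)$ gives $C_{\text{DI}}(\ell^1(BC,\omega)) \geq N/2$, which comfortably exceeds the stated $(N/86)^{1/3}$ for all $N \geq 1$ (and is in any case the trivial bound for small $N$, where $C_{\text{DI}} \geq 1$ already).

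The main conceptual step is recognising that non-commutativity of $\ell^1(BC)$ lives entirely outside $\ell^1(BC_I)$, so the commutator $[a,b]$ only sees the off-$BC_I$ parts. The key technical pivot is the observation that $\delta_e - h$ is a nonzero idempotent, which upgrades the commutator identity from a mere upper-bound estimate into a genuine constraint. Once these two points are in hand, the rest is a short norm calculation, and no deeper analysis (neither Proposition \ref{altpredfcounterex} nor Lemma \ref{lem:six}) is needed for this particular bound.
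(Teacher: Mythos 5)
Your proof is correct, and it takes a genuinely different --- and in fact sharper --- route than the paper's. The paper also splits the witnesses into their $BC_I$ and off-$BC_I$ parts, but reaches the conclusion by a longer chain: it restricts the idempotent $h=g*f$ to $BC_I$, perturbs that restriction to a genuine idempotent of $\ell^1(BC_I)$ via Proposition~\ref{approxidemp}, invokes the classification of idempotents in $\ell^1(BC_I)$ (Lemma~\ref{lem:six}) to locate a coefficient $t\geq 1$ at which $h$ is within $\epsilon$ of $\pm 1$, and then compares the $e_t$-coefficients of $f*g=\delta_e$ and $g*f=h$ to obtain $\|f'\|\,\|g'\|\geq\tfrac12(1-\epsilon)$. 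Since that $\epsilon$ depends on $\|h\|_\omega/N$, the estimate has to be bootstrapped against itself, which is exactly where the cube root and the constant $86$ come from. Your commutator identity $\delta_e-h=[a,b]=[a_I,b_O]+[a_O,b_I]+[a_O,b_O]$, combined with $\|\delta_e-h\|_1\geq 1$ (which is precisely Remark~\ref{rem:c_di_defn}, applied in the unweighted algebra $\ell^1(BC)$, into which $\ell^1(BC,\omega)$ embeds because $\omega\geq 1$), replaces all of that machinery and removes the circularity, yielding the linear bound $N/2$ in place of $(N/86)^{1/3}$. Every step checks out: $[a_I,b_I]=0$ because $BC_I$ is a commutative subsemigroup closed under multiplication, the $\ell^1$-norm is submultiplicative, and $(u+Nv)(x+Ny)\geq N(uy+vx+vy)$ for $N\geq 1$. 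What the paper's finer analysis extracts is a lower bound on the single product $vy=\|f'\|\,\|g'\|$ --- in principle an $N^2$-order piece of information --- but the bootstrap degrades it below your bound anyway, so your argument strictly improves the stated constant; moreover the same trick transfers to Proposition~\ref{prop:9}, since the idempotents of $\text{Cu}_2$ (together with $\lozenge$) likewise form a commutative subsemigroup.
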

\begin{proof}
Let $\mc A = \ell^1(BC, \omega)$, and suppose that $K > C_{\text{DI}}(\mc A)$, so
that $K>1$, and we can find $f,g\in\mc A$ such that $f*g=\delta_e$ and $g*f\not=\delta_e$,
and with $\|f\|_\omega \|g\|_\omega \leq K$.  As $\omega\geq 1$ we can regard $\mc A$ as a
(possibly not closed) subalgebra of $\ell^1(BC)$.

Let $h :=g*f$, then $h$ is an idempotent.  Let $k\in\ell^1(BC_I)$ be the restriction of $h$ onto $BC_I$.
Working in $\ell^1(BC)$, we notice that
\[ \|h\|_\omega = \sum_{s\in BC_I} |h(s)|\omega(s) + \sum_{s\not\in BC_I} |h(s)|\omega(s)
\geq N \sum_{s\not\in BC_I} |h(s)| = N \|h-k\|, \]
by our assumption on the weight.  We now observe that, because $h^2=h$,
\begin{align*} \|k^2-k\| &= \|k(k-h) + (k-h)h + h - k \| \\
&\leq (\|k\| + \|h\| + 1) \|k-h\|
\leq \frac{\|h\|_\omega}{N} (2\|h\|_\omega+1). 
\end{align*}
Let this quantity be $\nu$, and suppose that $\nu < 1/4$. Working in $\ell^1(BC_I)$, by Proposition~\ref{approxidemp}
there is an idempotent $k' \in \ell^1(BC_I)$ with
\[ \|k-k'\| \leq (\|k\| + 1/2)( (1-4\nu)^{-1/2} -1 ). \]
Let this quantity be $\nu'$.  Working in $\ell^1(BC)$, we have that
\[ \| h-k' \| \leq \|h-k\| + \|k-k'\|
\leq \nu' + \frac{\|h\|_\omega}{N}. \]
Let this quantity be $\epsilon$.

Assume that $\epsilon<1$.  By Remark~\ref{rem:c_di_defn}, as $h\not=\delta_e$,
we have that $\|\delta_e-h\|\geq 1$, and so $k'\not=\delta_e$.  

Let $e_n = \delta_{q^np^n}$ for all $n \in \mathbb{N}_0$ and write
\[ k' = \sum_{n\geq 0} k'(n) e_n. \]
As $k'\in\ell^1(BC_I)$
is an idempotent, we hence conclude, using Lemma~\ref{lem:six}, that
there is some $t\geq 1$ with $k'(t)=\pm1$ and so certainly $|h(t)\mp 1|<\epsilon$.

Write
\[ f = \sum_{n\geq 0} f(n) e_n + f' = f'' + f' \]
say, where $f'$ is supported off $BC_I$, and similarly for $g$.  Then
\[ f*g = \sum_{n,m\geq 0} f(n) g(m) e_{\max(n,m)}
+ f' * g'' + f'' * g' + f' * g'. \]
Notice that by the points above Lemma \ref{lem:six}, we have that $f'*g'' + f''*g'$ is supported off $BC_I$.
Write $(f'g')_n$ for the coefficient of $e_n$ in the expansion of $f'*g'$, and similarly
for $g'*f'$.  That $f*g=\delta_e$ means that
\begin{equation} \sum_{ \max(n,m)=t } f(n)g(m) + (f'g')_t = 0 \qquad (t\geq 1).
\label{eq:1}
\end{equation}

Also, $|h(t) \mp 1| < \epsilon$ and that $g' \ast f'' + g'' \ast f'$ is supported off $BC_I$ means that
\[ \Big|\sum_{ \max(n,m)=t } f(n)g(m) + (g'f')_t \mp 1\Big| < \epsilon \qquad (t \geq 1). \]
Using also (\ref{eq:1}) we see that $|-(f'g')_t + (g'f')_t \mp 1| < \epsilon$.
Thus $|(g'f')_t - (f'g')_t|>1-\epsilon$,
and so $\|f'\| \|g'\| \geq \frac12(1-\epsilon)$.  It follows that
\[ K \geq \|f\|_\omega \|g\|_\omega \geq N^2 \|f'\| \|g'\| \geq \frac{N^2}{2}(1-\epsilon). \]

Choose $\epsilon_0\in (0,1)$, and suppose that $43 K^3 \leq N \epsilon_0$, so certainly
$K^2 < N/15$.  As $\|h\|_\omega \leq K$, we see that $\nu \leq K(2K+1) / N \leq 3K^2/N < 1/5$,
and so $(1-4\nu)^{-1/2}-1 \leq 7\nu$, so that
\[ \nu' \leq (K+1/2)((1-4\nu)^{-1/2}-1)
\leq 14K\nu \leq \frac{42K^3}{N}. \]
Finally, $\epsilon = \nu' + \|h\|_w/N \leq \frac{42K^3+K}{N} \leq \frac{43K^3}{N}
\leq \epsilon_0$.  We conclude that if $43K^3 \leq N \epsilon_0$ then
$\frac12 N^2 (1-\epsilon_0) \leq K$.

Now, $K>C_{\text{DI}}(\mc A)$ is arbitrary, so take $\epsilon_0=1/2$, for example, to
conclude that if $C_{\text{DI}}(\mc A) < (N/86)^{1/3}$, then we can choose a suitable
$K \leq (N\epsilon_0/43)^{1/3}$, and so conclude that
\[ \frac{N^2}{2}(1-\epsilon_0) = \frac{N^2}{4} \leq K \leq (N/86)^{1/3}
\quad\implies\quad
N^5 \leq \frac{64}{86}, \]
which is a contradiction, as required.
\end{proof}

\begin{Prop}\label{prop:2}
For any $n\in\mathbb N$ we can find a unital Banach algebra $\mc{A}_n$ with
$C'_{\text{DI}}(\mc{A}_n)=1$ yet $C_{\text{DI}}(\mc{A}_n) \geq n$.
\end{Prop}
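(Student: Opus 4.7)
The plan is to construct each $\mathcal{A}_n$ as a weighted $\ell^1$-algebra on the bicyclic monoid $BC$, choosing the weight so that it is trivial on the sub-semigroup $BC_I$ of idempotents (forcing $C'_{\text{DI}}$ to be small) but large off $BC_I$ (so that Proposition~\ref{prop:8} forces $C_{\text{DI}}$ to be large).

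Concretely, for any integer $N\geq 1$, I would define $\omega_N: BC \to [1,+\infty)$ by $\omega_N(s) = 1$ if $s\in BC_I$ and $\omega_N(s)=N$ if $s\not\in BC_I$. The first task is to check that $\omega_N$ is a weight: note $\omega_N(e)=1$ since $e=q^0p^0\in BC_I$, and for the submultiplicativity, using the observations above Lemma~\ref{lem:six} that $BC_I$ is a sub-semigroup and that $st\notin BC_I$ whenever exactly one of $s,t$ lies in $BC_I$, the four cases reduce to the trivial inequalities $1\leq 1\cdot 1$, $N\leq 1\cdot N$ and $\omega_N(st)\leq N\leq N^2$. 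Set $\mathcal{A}_n := \ell^1(BC,\omega_{86n^3})$, a unital Banach algebra.

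For the upper bound on $C'_{\text{DI}}(\mathcal{A}_n)$, the key observation is that $\delta_{qp}\in \mathcal{A}_n$ is an idempotent (since $qp\in BC_I$) which is equivalent to $\delta_e$ via $\delta_p*\delta_q = \delta_{pq}=\delta_e$ and $\delta_q*\delta_p=\delta_{qp}$, and obviously $\delta_{qp}\neq\delta_e$. Its norm is $\omega_{86n^3}(qp)=1$, and any nonzero idempotent has norm at least $1$, so $C'_{\text{DI}}(\mathcal{A}_n)=1$.

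For the lower bound on $C_{\text{DI}}(\mathcal{A}_n)$, I would simply invoke Proposition~\ref{prop:8} with $N=86n^3$: the hypotheses $\omega_N\geq 1$ and $\omega_N(s)\geq N$ for $s\notin BC_I$ are clear by construction, giving $C_{\text{DI}}(\mathcal{A}_n) \geq (86n^3/86)^{1/3} = n$, as desired. The main work has already been done in Proposition~\ref{prop:8}; here the only real difficulty is spotting the right weight, and the essential observation is that one is allowed to keep the weight trivial on $BC_I$, which is exactly where the cheap nontrivial idempotent $\delta_{qp}$ lives.
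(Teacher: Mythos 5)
Your proposal is correct and follows essentially the same route as the paper: define a weight on $BC$ equal to $1$ on a sub-semigroup containing $e$ and $qp$ and equal to $N=86n^3$ off $BC_I$, witness $C'_{\text{DI}}=1$ with the idempotent $\delta_{qp}\sim\delta_e$, and invoke Proposition~\ref{prop:8} for the lower bound on $C_{\text{DI}}$. The only (immaterial) difference is that you set the weight to $1$ on all of $BC_I$ rather than just on the sub-semigroup $\{e,qp\}$ used in the paper; both choices satisfy the hypotheses of Proposition~\ref{prop:8} and the submultiplicativity check goes through in either case.
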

\begin{proof}
Given $n$, pick $N$ with $N \geq 86n^3$.
Define $\omega_n$ on $BC$ by $\omega_n(e)=1$, $\omega_n(qp)=1$, $\omega_n(s)=N$ otherwise.
Let $X = \{e, qp\} \subseteq BC$.  We now prove that $\omega_n$ is a weight, for which we need to
show that $\omega_n(st)\leq \omega_n(s) \omega_n(t)$ for all $s,t \in BC$.  This can only fail if
we can find $s,t$ such that $\omega_n(st)=N$ yet $\omega_n(s)=\omega_n(t)=1$, which is
impossible as $X$ is a sub-semigroup of $BC$.

Now consider $\mc{A}_n:=\ell^1(BC, \omega_n)$, and let $h =\delta_{qp}\in\mc{A}_n$.  Then $\|h\|_{\omega_n} = 1$ and as before, $h\sim \delta_e$, so $C'_{\text{DI}}(\mc{A}_n) \leq 1$, hence $C'_{\text{DI}}(\mc{A}_n) = 1$.  However, by Proposition~\ref{prop:8}, $C_{\text{DI}}(\mc A_n)
\geq (N/86)^{1/3} \geq n$, as required.
\end{proof}

Using this proposition, we thus obtain a sequence $(\mc A_n)$ of
Dedekind-infinite Banach algebras, with $C'_{\text{DI}}(\mc A_n)=1$ for each $n$, but with
$C_{\text{DI}}(\mc A_n) \rightarrow \infty$.  By Proposition~\ref{diconstant} we find that
$\asy(\mc A_n)$ is Dedekind-finite.  We conclude that in Proposition~\ref{prop:1} we
cannot replace $C_{\text{DI}}$ with $C'_{\text{DI}}$.

\begin{Rem}
Notice also that in this way, we obtain a sequence $(\mc A_n)$ of Banach algebras, and
idempotents $p_n\in\mc A_n$ such that each $p_n$ is equivalent to $1_n$, but the (equivalence
class of the) sequence $P=(p_n)$ is not equivalent to $1$ in $\asy(\mc A_n)$.
\end{Rem}

\begin{Rem}\label{rem:7}
The weights we have so far constructed have the property that $\ell^1(BC,\omega_n)$ is
isomorphic (just not isometric) with $\ell^1(BC)$, for the formal identity map.
However, we can easily construct examples which do not have this property.

Viewing $BC$ as the set of reduced words of the form 
$s = q^\alpha p^\beta$ we define the \emph{word length} as $\ell(s) = \alpha+\beta$.  It is easy to
see that $\ell:BC\rightarrow\mathbb{N}_0$ is sub-additive: $\ell(st) \leq \ell(s)+\ell(t)$.
As such, $\omega_x(s) = \exp(x\ell(s))$ defines a weight for every $x\geq 0$.
Furthermore, $\ell^1(BC, \omega_x)$ is not (naturally) isomorphic to $\ell^1(BC)$ for any $x>0$.

We now adapt the construction in the proof of Proposition~\ref{prop:2}.
Define, for $x\geq 0$,
\[ \omega_x(s) = \begin{cases} 1 &\text{if } s=e\text{ or } s=qp, \\
\exp(x\ell(s)) &\text{otherwise.} \end{cases} \]
This is a weight by the same argument as in the proof of Proposition~\ref{prop:2}.  As before,
with $\mc A_x = \ell^1(BC, \omega_x)$, we have $C'_{\text{DI}}(\mc A_x)=1$ while
$C_{\text{DI}}(\mc A_x) \geq (e^x/86)^{1/3}$.
\end{Rem}

\subsection{Renormings}\label{sec:renomings}

In Section~\ref{sec:prelin} we mentioned that any unital Banach algebra can be renormed so as
to make the norm of the unit be $1$.  Let us explore this further.  Firstly, we could take a more
``permissive'' definition of a unital Banach algebra: a complete normed algebra, with a contractive
product, and an element $e\in \mc A$ with $ea=ae=a$ for each $a\in\mc A$.  That is, we do not
assume that $\|e\|=1$.  Notice that if $\mc A$ is a Banach algebra with a unit of norm one,
then for $m\geq 1$ we can define $\|a\|_m = m\|a\|$ for $a\in\mc A$.  Then $\|\cdot\|_m$ is
an equivalent norm on $\mc A$, and as $\|ab\|_m = m\|ab\| \leq m\|a\| \|b\| \leq m^2\|a\|\|b\|
=\|a\|_m \|b\|_m$, we have a contractive product, but of course now $\|e\|_m=m$.

\begin{Prop}\label{prop:12}
With this definition of a unital Banach algebra, there exists a sequence $(\mc A_n)$ of unital
Banach algebras such that $\asy(\mc A_n)$ is not unital.
\end{Prop}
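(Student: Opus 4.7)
The plan is to exploit the freedom to renorm a unital Banach algebra so that its unit becomes arbitrarily large, while keeping the product contractive. Once the unit norms grow, the constraint of lying in $\ell^\infty(\mc A_n)$ forces sequences to have ``small'' entries in the underlying vector space, which makes all products vanish in the quotient.

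Concretely, I would take $\mc A_n := \mathbb{C}$ equipped with the scaled norm $\|z\|_n := n|z|$ (for $n \geq 1$). This is unital with unit $1 \in \mathbb{C}$ of norm $\|1\|_n = n$, and the product is contractive: $\|zw\|_n = n|z||w| \leq (n|z|)(n|w|) = \|z\|_n \|w\|_n$ since $n \geq 1$. Thus each $\mc A_n$ is a unital Banach algebra in the permissive sense.

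Next, I would compute what $\ell^\infty(\mc A_n)$ and $\asy(\mc A_n)$ look like. A sequence $Z = (z_n)$ lies in $\ell^\infty(\mc A_n)$ precisely when $\sup_n n|z_n| < \infty$, i.e.\ $|z_n| = O(1/n)$. For any two such sequences $Z = (z_n), W = (w_n) \in \ell^\infty(\mc A_n)$, the product $ZW = (z_n w_n)$ satisfies
\[
\|z_n w_n\|_n = n|z_n||w_n| \leq \frac{1}{n}\,(n|z_n|)(n|w_n|) \leq \frac{\|Z\|\,\|W\|}{n} \longrightarrow 0,
\]
so $ZW \in c_0(\mc A_n)$. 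Passing to the quotient, this gives $\pi(Z)\pi(W) = 0$ in $\asy(\mc A_n)$; that is, $\asy(\mc A_n)$ has identically zero multiplication.

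Finally, I would verify that $\asy(\mc A_n)$ is nontrivial, for instance by exhibiting the element $\pi(Z)$ with $z_n := 1/n$: then $\|z_n\|_n = 1$ for all $n$, so $Z \in \ell^\infty(\mc A_n)$, while $\|z_n\|_n \not\to 0$, so $\pi(Z) \neq 0$. Since $\asy(\mc A_n)$ is a nonzero algebra in which every product is zero, it cannot be unital (a unit $e$ would satisfy $e = e \cdot e = 0$, contradicting $e \cdot \pi(Z) = \pi(Z) \neq 0$). There is no serious obstacle here; the whole point is simply to notice that once the normalisation $\|1\| = 1$ is dropped, the embedding of each $\mc A_n$ into $\ell^\infty(\mc A_n)$ via constant sequences is no longer available, and this is enough to sabotage the transfer of unitality.
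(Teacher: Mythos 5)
Your proof is correct and uses the same construction as the paper: rescaling the norm on a fixed unital Banach algebra by the factor $n$ (the paper takes a general $\mc A$ with $\|\cdot\|_n = n\|\cdot\|$, you take $\mc A = \mathbb{C}$), so that elements of $\ell^\infty(\mc A_n)$ have underlying norm $O(1/n)$. The only difference is in how you finish: the paper tests a putative unit against the sequence $(e/n)$ and derives a contradiction, whereas you observe the slightly stronger and cleaner fact that all products vanish in $\asy(\mc A_n)$, so it is a nonzero algebra with identically zero multiplication.
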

\begin{proof}
Fix a unital Banach algebra $\mc A$ with a norm one unit $e$.  For each $n\in\mathbb N$ let
$\mc A_n = (\mc A, \|\cdot\|_n)$ with $\|\cdot\|_n$ defined as above.  Towards a contradiction,
suppose that $f\in\asy(\mc A_n)$ is a unit, say $f=\pi(F)$ with $F=(f_n)\in\ell^\infty(\mc A_n)$,
so that
\[ 0= \limsup_{n\rightarrow\infty} \| a_nf_n - a_n \|_n 
= \limsup_{n\rightarrow\infty} n \| a_nf_n - a_n \| \]
for each $A=(a_n)\in\ell^\infty(\mc A_n)$.
Notice that $(a_n)\in\ell^\infty(\mc A_n)$ exactly
when there is $K>0$ with $\|a_n\| \leq Kn^{-1}$ for each $n$. 
Let $b_n:= e/n$ for all $n \in \mathbb{N}$. Then $\| b_n \| = 1/n$ and $\|b_n \|_n =1$, hence $B:=(b_n) \in \ell^{\infty}(\mathcal{A}_n)$.

 We hence see that
\[ 0= \limsup_{n\rightarrow\infty} n \| b_n f_n - b_n \| = \limsup_{n\rightarrow\infty} \| f_n - e \|. 
\]

However, as $\|f_n\| \leq Kn^{-1}$ for some fixed $K$, we see that
$\| e -f_n \| \geq \|e\| - \|f_n\| \geq 1 - K/n$ which is a contradiction.
\end{proof}

We thus see that it pays to be careful about ``implicit renormings'', when considering the sort
of questions we are asking.  We are also now lead to wonder how the constants $C_{\text{DI}}
(\mc A)$ and $C'_{\text{DI}}(\mc A)$ behave under renormings.  The following is a result in this
direction; we remark that we revert to our standing assumption that unital Banach algebras have
contractive multiplications and units of norm one.

\begin{Prop}\label{prop:10}
Let $\mc A$ be a Dedekind-infinite Banach algebra.  There is an equivalent norm $\|\cdot\|_0$ on $\mc A$
such that $(\mc A, \|\cdot\|_0)$ is a unital Banach algebra, and
$C'_{\text{DI}}(\mc A, \|\cdot\|_0)=1$.
\end{Prop}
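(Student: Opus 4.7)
Proof plan: Since $\mc A$ is Dedekind-infinite, fix an idempotent $p_0\in\mc A$ with $p_0\sim 1$ and $p_0\neq 1$; set $q_0=1-p_0$, which is also an idempotent, with $p_0q_0=q_0p_0=0$. Since equivalence of idempotents is a purely algebraic notion, the relation $p_0\sim 1$ will persist under any renorming, so it suffices to produce an equivalent algebra norm $\|\cdot\|_0$ on $\mc A$ with $\|1\|_0=1$ and $\|p_0\|_0=1$: by Remark~\ref{rem:c_di_defn} any nonzero idempotent satisfies $\|p\|_0\geq 1$, so this will force $C'_{\text{DI}}(\mc A,\|\cdot\|_0)=1$.

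The key trick is to first renorm the underlying Banach \emph{space} $\mc A$ so that left multiplication by $p_0$ becomes a norm-one operator. Define
\[ \|x\|_1 := \max\bigl(\|p_0 x\|,\,\|q_0 x\|\bigr) \qquad (x\in\mc A). \]
Since $\|x\|\leq \|p_0x\|+\|q_0x\|\leq 2\|x\|_1$ and $\|x\|_1\leq \max(\|p_0\|,\|q_0\|)\|x\|$, this is equivalent to $\|\cdot\|$. Using $p_0q_0=q_0p_0=0$ and $p_0^2=p_0$, a direct computation gives $\|p_0 x\|_1 = \|p_0x\|\leq \|x\|_1$, so multiplication by $p_0$ has norm at most one on $(\mc A,\|\cdot\|_1)$. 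Then define
\[ \|a\|_0 := \sup\bigl\{\|ax\|_1 \,:\, x\in\mc A,\, \|x\|_1\leq 1\bigr\}, \]
the operator norm of left multiplication by $a$ on $(\mc A,\|\cdot\|_1)$. This is automatically submultiplicative with $\|1\|_0=1$, and it is equivalent to $\|\cdot\|$: one direction is $\|a\|_0\leq \max(\|p_0\|,\|q_0\|)\|a\|$ from the bound on $\|\cdot\|_1$, and the other follows from $\|a\|\leq 2\|a\cdot 1\|_1 \leq 2\|1\|_1\|a\|_0$. Completeness of $(\mc A,\|\cdot\|_0)$ then comes for free from equivalence to the original complete norm.

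By the observation above, $\|p_0\|_0\leq 1$, and since $p_0$ is a nonzero idempotent we also have $\|p_0\|_0\geq 1$, hence $\|p_0\|_0=1$. Combined with $p_0\sim 1$ and $p_0\neq 1$, this yields $C'_{\text{DI}}(\mc A,\|\cdot\|_0)\leq 1$, and the reverse inequality from Remark~\ref{rem:c_di_defn} gives equality. The main conceptual obstacle is to produce a new algebra norm that simultaneously shrinks $\|p_0\|$ down to $1$, keeps $\|1\|=1$, and remains submultiplicative; the trick is that one cannot renorm by a gauge-type formula directly on $\mc A$ without breaking submultiplicativity, so we first design a Banach space norm $\|\cdot\|_1$ tailored to $p_0$ and then pass to the induced left-regular operator norm, which inherits the required multiplicative structure automatically.
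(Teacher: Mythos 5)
Your proof is correct and follows essentially the same route as the paper: the paper simply observes that $\{1,p\}$ is a bounded sub-semigroup and invokes \cite[Proposition~2.1.9]{dal}, whose proof (as the paper itself sketches) is exactly your mechanism of first building an auxiliary equivalent Banach-space norm on which multiplication by the idempotent is contractive and then passing to the left-regular operator norm. Your particular auxiliary norm $\max(\|p_0x\|,\|q_0x\|)$ differs slightly from the $\max(\|x\|,\|p_0x\|)$ used there, but this is only a cosmetic variation (and the missing factor of $2$ in your bound $\|a\|_0\leq\max(\|p_0\|,\|q_0\|)\|a\|$ is harmless for equivalence of norms).
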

\begin{proof}
Let $p \in \mc A$ be an idempotent with $p \sim 1$ and $p \neq 1$. The set $S:= \lbrace 1, p \rbrace$ is clearly a bounded sub-semigroup of $\mathcal{A}$.  Hence by
\cite[Proposition~2.1.9]{dal} there is a submultiplicative norm $\| \cdot \|_0$ equivalent to $\| \cdot \|$ on $\mathcal{A}$ such that $(\mathcal{A}, \| \cdot \|_0)$ is  Banach algebra and $\| s \|_0 \leq 1$ for every $s \in S$. In particular $\|1\|_0 \leq 1$ and $\|p \|_0 \leq 1$, hence $\|1\|_0 = 1$ and $\|p \|_0 = 1$. Consequently $C_{DI}'(\mathcal{A}, \| \cdot \|_0) = 1$.

For the benefit of the reader, we remark that \cite[Proposition~2.1.9]{dal} is proved as
follows.  Firstly we define $\nu(a) = \sup\{ \|a\|, \|sa\| : s\in S\}$ and then notice that $\nu$
is an equivalent norm on $\mc A$ with bounded product, such that $S$ is $\nu$-bounded, and such
that $\nu(sa)\leq \nu(a)$ for $a\in\mc A, s\in S$.  Then let $E$ be the unconditional unitisation
of $(\mc A, \nu)$, and finally let $\|\cdot\|_0$ be the norm induced on $\mc A$ by the
left-regular representation of $\mc A$ on $E$.
\end{proof}

\begin{Rem}
For each $K \geq 1$, we have examples of $\mc A$ where $C'_{\text{DI}}(\mc A) \geq K$, see the proof of
Theorem~\ref{thm:1} for example.  Thus, in the previous proposition,
while the original norm $\|\cdot\|$ and the new norm $\|\cdot\|_0$ are equivalent,
there is not some absolute constant $C>0$ (independent of $\mc A$) with
$C^{-1}\|\cdot\| \leq \|\cdot\|_0 \leq C\|\cdot\|$.
\end{Rem}

Notice that in the above proof, we actually showed something more: given any unital Banach algebra $\mc A$ with an idempotent $p \in \mc A$ such that $p\sim 1$ and $p\not=1$, we can find an equivalent norm $\|\cdot\|_0$ on $\mc A$ with $\|p\|_0=1$, hence $p$ witnesses that $C'_{DI}(\mc A, \| \cdot \|_0) =1$. The following shows that a similar statement does not hold for $C_{\text{DI}}$.

\begin{Prop}\label{prop:11}
For each $K>0$ there is a Banach algebra $\mc A$ and $a,b\in\mc A$ with $ab=1, ba\not=1$,
and such that, if $\|\cdot\|_0$ is any equivalent norm on $\mc A$, then
$\|a\|_0 \|b\|_0 \geq K$.
\end{Prop}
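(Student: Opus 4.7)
The plan is to exploit the fact that the spectral radius is an algebraic invariant and therefore bounds every submultiplicative norm from below. Given $K>0$, I would choose $x>0$ with $e^{2x}\geq K$ and use the weight $\omega_x(s)=\exp(x\ell(s))$ on $BC$ from the first part of Remark~\ref{rem:7}, where $\ell$ denotes the word length. Setting $\mc A:=\ell^1(BC,\omega_x)$ and $a:=\delta_p$, $b:=\delta_q$, the bicyclic relation gives $ab=\delta_e=1$ and $ba=\delta_{qp}\neq 1$, so $a,b$ is the candidate witnessing pair.

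The next step is to compute the spectral radii of $a$ and $b$ in $\mc A$. Since $a^n=\delta_{p^n}$ and $\ell(p^n)=n$, one reads off $\|a^n\|_{\omega_x}=e^{xn}$, hence $r(a)=\lim_n\|a^n\|_{\omega_x}^{1/n}=e^x$; analogously $r(b)=e^x$. (Identical estimates hold if one prefers the adapted version of $\omega_x$ also introduced in Remark~\ref{rem:7}, since $p^n\neq qp$ for every $n\geq 1$.)

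Finally, I would invoke the key renorming-invariance: if $\|\cdot\|_0$ is any equivalent norm on $\mc A$ under which $(\mc A,\|\cdot\|_0)$ is a unital Banach algebra, then, since invertibility in $\mc A$ is a purely algebraic matter, the spectrum, and consequently the spectral radius, of every element is unchanged. The standard estimate $r(\cdot)\leq\|\cdot\|_0$, valid for any submultiplicative norm, then yields $\|a\|_0\geq e^x$ and $\|b\|_0\geq e^x$, hence $\|a\|_0\|b\|_0\geq e^{2x}\geq K$, as required.

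There is essentially no technical obstacle; the conceptual content is the contrast with Proposition~\ref{prop:10}. For $C'_{\text{DI}}$ only the norm of an idempotent $p$ matters, and $p^2=p$ forces $r(p)\in\{0,1\}$, so spectral radius gives no useful lower bound on $\|p\|_0$ and the sub-semigroup renorming trick applies. For $C_{\text{DI}}$, by contrast, the witnessing pair $a,b$ can have arbitrarily large spectral radii even though $ab=1$ (exactly the phenomenon exhibited by $\delta_p,\delta_q$ in $\ell^1(BC,\omega_x)$), and this algebraic datum blocks any attempt to shrink $\|a\|_0\|b\|_0$ by passing to an equivalent algebra norm.
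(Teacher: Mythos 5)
Your proof is correct and is essentially the paper's own argument: the paper uses the same algebra $\ell^1(BC,\omega_x)$ with $a=\delta_p$, $b=\delta_q$, and its explicit computation $\log\|\delta_s\|_0\geq\liminf_n\bigl(\tfrac{-\log m}{n}+x\tfrac{n\alpha-(n-2)\beta}{n}\bigr)$ is exactly the spectral radius formula $r(\delta_s)=\lim_n\|\delta_{s^n}\|^{1/n}\leq\|\delta_s\|_0$ written out by hand. Naming the spectral radius and its invariance under equivalent algebra norms is a clean way to package the identical estimate.
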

\begin{proof}
We use the same weight $\omega_x$ as in Remark~\ref{rem:7}, and set $\mc A_x =
\ell^1(BC,\omega_x)$.  Let $\|\cdot\|_0$ be an equivalent norm on $\mc A_x$, so there is $m>0$
with $m^{-1} \|f\|_{\omega_x} \leq \|f\|_0 \leq m \|f\|_{\omega_x}$ for all $f\in\mc A_x$.
Let $s = q^\alpha p^\beta \in BC$, with say $\alpha > \beta$.  Then
\[ s^2 = q^{2\alpha-\beta} p^\beta,\quad
s^3 = q^{3\alpha-2\beta} p^\beta, \quad
\cdots, \quad s^n = q^{n\alpha-(n-1)\beta} p^\beta. \]
It follows that
\begin{align*} & m^{-1} \exp(x(n\alpha-(n-2)\beta)) = m^{-1}\|\delta_{s^n}\|_{\omega_x}
\leq \|\delta_{s^n}\|_0 \leq \|\delta_s\|_0^n \\
\implies\qquad &
\frac{-\log(m)}{n} + x \frac{n\alpha-(n-2)\beta}{n} \leq \log \|\delta_s\|_0 \\
\implies\qquad &  \log \|\delta_s\|_0 \geq \liminf_{n\rightarrow\infty} \left( 
\frac{-\log(m)}{n} + x \frac{n\alpha-(n-2)\beta}{n} \right) = x( \alpha - \beta ).
\end{align*}
Thus $\|\delta_s\|_0 \geq e^{x(\alpha-\beta)}$.  The same argument applies in the case when
$\alpha < \beta$.

As in the proof of Proposition~\ref{prop:8}, set $a = \delta_p$ and $b = \delta_q$, so that
$ab=1$ and $ba\not=1$.  We have just shown that $\|a\|_0 \geq e^x$ and $\|b\|_0 \geq e^x$,
which completes the proof.
\end{proof}

We have been unable to decide if it is possible to renorm $\mc A_x$ so as to get 
$C_{\text{DI}}(\mc A_x, \|\cdot\|_0) = 1$ (or just be smaller than some absolute constant).
The difficulty is that, given the previous proposition, we need consider the possibility of
some other elements $c,d\in\mc A_x$ with $cd=1$ and $dc\not=1$, while also considering an
arbitrary renorming.

\subsection{For ultraproducts}
All of these results hold for ultraproducts with suitable modifications.
Let us start by indicating how to give a ``bare-hands'' proof.

\begin{Thm}\label{dfposultra}
Let $(\mc A_n)$ be a sequence of Banach algebras, and let $\mc U$ be an ultrafilter
such that $\{ n \in \mathbb{N} : \mc A_n \text{ is Dedekind-finite} \} \in\mc U$.  Then $(\mc A_n)_{\mc U}$
is Dedekind-finite.
\end{Thm}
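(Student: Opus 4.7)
The plan is to mimic the proof of Theorem~\ref{dfpositive}, replacing ``eventually'' statements with ``on a set in $\mc U$'' statements, and using that the intersection of two sets in $\mc U$ is again in $\mc U$.

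First, I would fix an idempotent $p \in (\mc A_n)_{\mc U}$ and pick a representing sequence $X = (x_n) \in \ell^\infty(\mc A_n)$ with $\pi_{\mc U}(X) = p$, so that $\lim_{n\to\mc U}\|x_n - x_n^2\| = 0$. Setting $\nu_n := \|x_n - x_n^2\|$, the set $U_1 = \{ n : \nu_n < 1/8\}$ lies in $\mc U$. For $n \in U_1$, Proposition~\ref{approxidemp} produces an idempotent $p_n \in \mc A_n$ with $\|x_n - p_n\| \leq f_{\|X\|}(\nu_n)$, and by continuity of $f_{\|X\|}$ at $0$ we get $\lim_{n\to\mc U}\|x_n - p_n\| = 0$. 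Defining $p_n$ arbitrarily (say $0$) off $U_1$ yields an idempotent $P = (p_n) \in \ell^\infty(\mc A_n)$ with $\pi_{\mc U}(P) = p$.

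Next, assuming $p \sim 1$, choose $A = (a_n), B = (b_n) \in \ell^\infty(\mc A_n)$ representing $a, b$ with $ab = 1$ and $ba = p$. Then $\lim_{n\to\mc U}\|1_n - a_nb_n\| = 0$ and $\lim_{n\to\mc U}\|p_n - b_na_n\| = 0$. Pick $\delta \in (0,1)$ with $\|A\|\|B\|\delta/(1-\delta) + 2\delta < 1$. The set
\[ U_2 = \{ n \in U_1 : \|1_n - a_nb_n\| < \delta,\ \|b_na_n - x_n\| < \delta,\ \|x_n - p_n\| < \delta\} \]
belongs to $\mc U$. For $n \in U_2$, $u_n := a_nb_n$ is invertible with $\|1_n - u_n^{-1}\| < \delta/(1-\delta)$, and $q_n := b_nu_n^{-1}a_n$ is an idempotent in $\mc A_n$ with $q_n \sim 1_n$.

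Now intersect $U_2$ with the set $V := \{n : \mc A_n \text{ is Dedekind-finite}\} \in \mc U$. For every $n \in U_2 \cap V$, Dedekind-finiteness of $\mc A_n$ forces $q_n = 1_n$, and the same estimate as in the proof of Theorem~\ref{dfpositive} gives
\[ \|1_n - p_n\| \leq \|A\|\|B\|\delta/(1-\delta) + 2\delta < 1. \]
Since $1_n - p_n$ is an idempotent, Remark~\ref{rem:c_di_defn} forces $p_n = 1_n$ for all $n \in U_2 \cap V$, which is a set in $\mc U$. Hence $\lim_{n\to\mc U}\|1_n - p_n\| = 0$, so $p = \pi_{\mc U}(P) = 1$, establishing Dedekind-finiteness of $(\mc A_n)_{\mc U}$.

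The main obstacle is essentially bookkeeping: one must be disciplined about recording which set in $\mc U$ each approximation lives on, and then intersect these finitely many sets (together with $V$) at the end. No genuinely new analytic argument beyond that in Theorem~\ref{dfpositive} is required; indeed, as the authors note in the introduction, once the sequence-algebra proof is clear, adaptation to the (ultra)filter case is routine, and one could alternatively invoke {\L}o\'s's Theorem after encoding Dedekind-finiteness via a suitable sentence in the language of bounded Banach algebra structures.
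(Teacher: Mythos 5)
Your proposal is correct and follows essentially the same route as the paper, which proves this theorem precisely by adapting the proof of Theorem~\ref{dfpositive}: replacing each ``eventually'' by membership in a set of $\mc U$ and passing to successively smaller sets of $\mc U$, finally intersecting with $\{n : \mc A_n \text{ is Dedekind-finite}\}$. The paper also records the alternative proof via {\L}o\'s's Theorem and Lemma~\ref{lem:sentence_for_di}, which you correctly identify as an option.
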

\begin{proof}
We simply adapt the proof of Theorem~\ref{dfpositive}.  Firstly, we find that there is
$U\in\mc U$ with $n\in U\implies \nu_n =\|x_n-x_n^2\| < 1/8$.  This allows us to find
$P=(p_n)$ so that $\pi(P) = p \in (\mc A_n)_{\mc U}$.  Then we find $U'\subseteq U$ with
$U'\in\mc U$ and $u_n^{-1}$ existing for $n\in U'$.  But moving to a smaller subset $U''$, we
can assume that $\mc A_n$ is Dedekind-finite for each $n \in U''$.  We then move finally to a
yet smaller subset to finish the proof.
\end{proof}

Proposition~\ref{diconstant} and Corollary~\ref{altdiconstant} also hold for ultraproducts.
Indeed, given an ultrafilter $\mc U$, we only need the weaker condition that for each $N\in\mathbb N$
we have that $\{ n \in \mathbb{N} : C_{\text{DI}}(\mc A_n)\geq N \} \in\mc U$ (or for $C_{\text{DI}}'$).
Hence also, with $\mathcal{A}_n := (\ell^1(BC, \omega_n), \ast)$ for every $n$, we have that
$(\mc A_n)_{\mc U}$ is Dedekind-finite while each $\mc A_n$ is Dedekind-infinite.
The analogue of Proposition~\ref{prop:1} is the following.

\begin{Prop}\label{prop:di_ultra}
Let $(\mathcal{A}_n)$ be a sequence of unital Banach algebras, let $\mc U$ be
an ultrafilter, and suppose that there exists $K>0$ such that $\{ n \in \mathbb{N} : C_{\text{DI}}(\mathcal{A}_n) \leq K \} \in \mc U$.  Then $(\mathcal{A}_n)_{\mc U}$ is Dedekind-infinite.
\end{Prop}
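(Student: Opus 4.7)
The plan is to adapt the proof of Proposition~\ref{prop:1} in essentially the obvious way, replacing the increasing subsequence $(n_k)$ with the ``large'' set $U := \{ n \in \mathbb N : C_{\text{DI}}(\mc A_n) \leq K \} \in \mc U$ supplied by the hypothesis, and replacing $\limsup$ by the ultralimit along $\mc U$.

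More concretely, for each $n \in U$, the definition of $C_{\text{DI}}$ lets us pick $a_n, b_n \in \mc A_n$ with $a_n b_n = 1_n$, $b_n a_n \neq 1_n$, and $\|a_n\| \|b_n\| \leq K+1$. After the usual rescaling $a_n \mapsto \lambda_n a_n$, $b_n \mapsto \lambda_n^{-1} b_n$, we may assume $\|a_n\| = \|b_n\| \leq (K+1)^{1/2}$. For the remaining $n \notin U$ we set $a_n = b_n = 1_n$. Then $A := (a_n)$ and $B := (b_n)$ lie in $\ell^\infty(\mc A_n)$, and since $a_n b_n = 1_n$ for every $n$, we get $\pi_{\mc U}(A)\, \pi_{\mc U}(B) = 1$ in $(\mc A_n)_{\mc U}$.

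To see that $\pi_{\mc U}(B)\,\pi_{\mc U}(A) \neq 1$, invoke Remark~\ref{rem:c_di_defn}: for each $n \in U$, since $b_n a_n$ is an idempotent distinct from $1_n$, we have $\|1_n - b_n a_n\| \geq 1$. Because $U \in \mc U$, this gives
\[ \|\pi_{\mc U}(1) - \pi_{\mc U}(B)\pi_{\mc U}(A)\| = \lim_{n \to \mc U} \|1_n - b_n a_n\| \geq 1, \]
so $\pi_{\mc U}(B)\pi_{\mc U}(A) \neq 1$, witnessing Dedekind-infiniteness of $(\mc A_n)_{\mc U}$.

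There is no real obstacle here; the argument is purely bookkeeping. The only mild subtlety is the rescaling step, which is needed so that both $A$ and $B$ lie in $\ell^\infty(\mc A_n)$ rather than merely having a bounded product of norms, and the appeal to Remark~\ref{rem:c_di_defn} to guarantee that the failure of $b_n a_n = 1_n$ is \emph{quantitative} (separated from $1_n$ by at least $1$ in norm), which is exactly what one needs to survive the passage to the ultraproduct.
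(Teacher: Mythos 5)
Your proof is correct and is essentially the argument the paper intends: the result is stated as ``the analogue of Proposition~\ref{prop:1}'', and your adaptation (replacing the subsequence by the set $U\in\mc U$, padding with $a_n=b_n=1_n$ off $U$, rescaling so both sequences are bounded, and using Remark~\ref{rem:c_di_defn} to keep $b_na_n$ uniformly separated from $1_n$) is exactly the direct translation the authors have in mind. The paper also records an alternative proof via {\L}o\'s's Theorem and Lemma~\ref{lem:sentence_for_di}, but your bare-hands route is the primary one and is complete as written.
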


Let us finish this section by indicating how we could have used {\L}o\'s's Theorem instead.  We need to encode the property of being Dedekind-finite (with suitable ``norm control'') into the language of Banach algebras.  In fact, it seems easier to work with Dedekind-infinite.
The idea of the following is extracted from the proofs of Theorem~\ref{dfpositive} and Proposition~\ref{diconstant}.

\begin{Lem}\label{lem:sentence_for_di}
For $n \in \mathbb{N}$, let $\varphi_n$ be the sentence
\[ \inf_{a\in B_n} \inf_{b\in B_n} \max\big(
\|ab-1\|, 1-\|ba-1\| \big). \]
Let $\mc A$ be a unital Banach algebra and let $\epsilon \in (0, 1/3n^2)$. If $\varphi_n^{\mc A} < \epsilon$ then $C_{\textrm{DI}}(\mc A)\leq n^2 / (1- \epsilon)$. Moreover, $\varphi_n^{\mc A}=0$ if and only if $C_{\textrm{DI}}(\mc A)\leq n^2$.
\end{Lem}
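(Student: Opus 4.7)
The plan is straightforward: the sentence $\varphi_n$ encodes, in an approximate sense, the property ``there exist $a,b\in B_n$ with $ab=1$ and $ba\not=1$''. If $\varphi_n^{\mc A}$ is small, I would correct approximate witnesses to exact ones via the standard invertibility trick, while tracking the product of norms; for the converse, I would rescale exact witnesses of $C_{\textrm{DI}}(\mc A)\leq n^2$ to fit inside $B_n$.

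For the first implication, given $\varphi_n^{\mc A} < \epsilon$ with $\epsilon\in(0,1/3n^2)$, I would extract $a,b \in B_n$ with $\|ab-1\|<\epsilon$ and $\|ba-1\|>1-\epsilon$ (both follow from the max being less than $\epsilon$). Then $u:=ab$ is invertible by the Neumann series, with $\|u^{-1}-1\|\leq\epsilon/(1-\epsilon)$, so $b':=bu^{-1}$ satisfies $ab'=1$ and
\[ \|b'a-ba\| = \|b(u^{-1}-1)a\| \leq n^2\epsilon/(1-\epsilon). \]
Combined with $\|ba-1\|>1-\epsilon$, this gives $\|b'a-1\|>((1-\epsilon)^2-n^2\epsilon)/(1-\epsilon)$; the hypothesis $\epsilon<1/3n^2$ yields $(n^2+2)\epsilon<1$, hence $(1-\epsilon)^2\geq 1-2\epsilon>n^2\epsilon$, so $b'a\not=1$. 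Since $\|a\|\|b'\|\leq n^2/(1-\epsilon)$, I conclude $C_{\textrm{DI}}(\mc A)\leq n^2/(1-\epsilon)$.

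For the equivalence $\varphi_n^{\mc A}=0\iff C_{\textrm{DI}}(\mc A)\leq n^2$, the $(\Rightarrow)$ direction follows from the above by letting $\epsilon\to 0^+$. For $(\Leftarrow)$, given $\delta\in(0,n^2)$, I would pick $a,b$ with $ab=1$, $ba\not=1$, $\|a\|\|b\|\leq n^2+\delta$; the invariance of $ab, ba$ under $(a,b)\mapsto(\lambda a,\lambda^{-1}b)$ allows me to assume $\|a\|=\|b\|\leq\sqrt{n^2+\delta}$. Now set $\mu:=n/\sqrt{n^2+\delta}\leq 1$ and $a':=\mu a$, $b':=\mu b$, so $a',b'\in B_n$, $a'b'=\mu^2\cdot 1$, and $b'a'=\mu^2 ba$. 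Then $\|a'b'-1\|=1-\mu^2=\delta/(n^2+\delta)$, and using $\mu^2 ba-1=\mu^2(ba-1)-(1-\mu^2)$ together with the estimate $\|ba-1\|\geq 1$ from Remark~\ref{rem:c_di_defn},
\[ \|b'a'-1\|\geq\mu^2\|ba-1\|-(1-\mu^2)\geq 2\mu^2-1=(n^2-\delta)/(n^2+\delta). \]
Hence $\max(\|a'b'-1\|,\,1-\|b'a'-1\|)\leq 2\delta/(n^2+\delta)\to 0$ as $\delta\to 0^+$, forcing $\varphi_n^{\mc A}=0$.

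The main technical point will be the rescaling in the last step: the infimum defining $C_{\textrm{DI}}$ controls only the product $\|a\|\|b\|$ and need not be attained, so the natural witnesses of $C_{\textrm{DI}}(\mc A)\leq n^2$ can sit just outside $B_n\times B_n$. A two-step rescaling (first equalize the norms, then contract) is needed to place them inside $B_n$, and this unavoidably deforms the exact identity $ab=1$ into an approximate one; this is precisely why the conclusion in the $(\Leftarrow)$ direction must be extracted via the $\max<\epsilon$ formulation rather than by exhibiting an exact witness of $\varphi_n^{\mc A}=0$.
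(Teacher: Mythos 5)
Your proof is correct and follows essentially the same route as the paper's: correct the approximate left inverse via the Neumann series and track $\|a\|\|b\|$ for the first implication, and rescale near-optimal witnesses of $C_{\textrm{DI}}(\mc A)\leq n^2$ into $B_n$ (using $\|ba-1\|\geq 1$ from Remark~\ref{rem:c_di_defn}) for the converse. The only cosmetic differences are that you establish $bu^{-1}a\neq 1$ by a direct lower bound rather than the paper's contradiction argument, and you parametrise the rescaling by $\|a\|\|b\|\leq n^2+\delta$ instead of $(n+\epsilon)^2$.
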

\begin{proof}
Let $n \in \mathbb{N}$ be fixed throughout the proof. Assume $\varphi_n^{\mc A} < \epsilon$. There are $a,b\in\mc A$ with $\|a\|, \|b\|\leq n$, $\|ab-1\|<\epsilon$ and $1-\|ba-1\|<\epsilon$, so that $\|ba-1\| > 1-\epsilon$.  Let $u:=ab$, then by the Carl Neumann series $u \in \inv(\mc A)$ with $\| 1-u^{-1} \| \leq \epsilon / (1-\epsilon)$ and $\| u^{-1} \| \leq 1 / (1-\epsilon)$. Let $a':= u^{-1}a$ and $b':= b$, then $a'b'=1$ and $b'a' = bu^{-1}a$. We \textit{claim} that $b'a' \neq 1$. For assume towards a contradiction that $b'a'=1$, so
\[ 1-\epsilon < \|ba-1\| \leq \| ba-bu^{-1}a\|
\leq \|a\| \|b\| \|1-u^{-1}\|
\leq n^2 \frac{\epsilon}{1-\epsilon}. \]
This is impossible because $\epsilon \in (0,1/3n^2)$, and so we conclude that $b'a' \neq 1$.  As $\|a'\| \|b'\| \leq \|a\| \|b\| (1-\epsilon)^{-1}$, we have that $C_{\textrm{DI}}(\mc A) \leq n^2 / (1-\epsilon)$.
 
For the second part of the statement suppose first $\varphi_n^{\mc A} = 0$. Clearly $\varphi_n^{\mc A} < \epsilon$, hence $C_{\textrm{DI}}(\mc A)\leq n^2 / (1- \epsilon)$ for each $\epsilon \in (0,1/3n^2)$ by the first part of the statement. Thus $C_{\textrm{DI}}(\mc A)\leq n^2$.  

Conversely, suppose $C_{\textrm{DI}}(\mc A)\leq n^2$. Let us fix $\epsilon \in (0,1)$. We can find $a,b\in\mc A$ with $ab=1, ba\not=1$ and $\|a\|\|b\|\leq (n+\epsilon)^2$.  By rescaling, we may suppose that $\|a\|=\|b\| \leq n+\epsilon$. As $ba \in \mc A$ is an idempotent not equal to $1$, by Remark \ref{rem:c_di_defn} we know that $\|ba-1\|\geq 1$.  
Set $a':=n (n+\epsilon)^{-1} a$ and $b':= n (n+\epsilon)^{-1} b$, so that $\|a'\| = \|b'\| \leq n$. Let us observe that
\begin{align*}
\|a'b'-1\| &= \left| \frac{n^2}{(n+\epsilon)^2} -1 \right| = \frac{\epsilon (2n + \epsilon)}{(n+\epsilon)^2}  <  \epsilon(2n + \epsilon), \\
\|b'a'-ba\| &\leq \left| \frac{n^2}{(n+\epsilon)^2} -1 \right| \|b\| \|a\| \leq \left| \frac{n^2}{(n+\epsilon)^2} -1 \right| (n + \epsilon)^2 = \epsilon (2n + \epsilon).
\end{align*}

Consequently $\|b'a'-1\| \geq \|ba-1 \| - \|b'a' - ba\| > 1-\epsilon(2n + \epsilon)$.  We conclude that $\varphi_n^{\mc A} < \epsilon(2n + \epsilon)$, and as $\epsilon \in (0,1)$ is arbitrary, we see that in fact $\varphi_n^{\mc A} = 0$.
\end{proof}

With the above result we can give alternative proofs to Theorems \ref{dfposultra} and \ref{prop:di_ultra}.

\begin{proof}[Second proof of Theorem~\ref{dfposultra}]
We prove the contrapositive. Assume therefore $(\mc A_i)_{\mc U}$ is Dedekind-infinite, thus there is an $n \in \mathbb{N}$ such that $C_{\textrm{DI}}((\mc A_i)_{\mc U})\leq n^2$. By Lemma~\ref{lem:sentence_for_di} this is equivalent to saying $\varphi_n^{(\mc A_i)_{\mc U}} =0$, which in turn is equivalent to $\lim_{i \rightarrow \mc U} \varphi_n^{\mc A_i} = 0$
by {\L}o\'s's Theorem.  Thus for a fixed $\epsilon \in (0,1/3n^2)$ we can find $U \in \mc U$ such that $\varphi_n^{\mc A_i} < \epsilon$ for each $i \in U$. Applying Lemma \ref{lem:sentence_for_di} again, we conclude that $C_{\textrm{DI}}(\mc A_i) \leq n^2 / (1-\epsilon)$ and hence $\mc A_i$ is Dedekind-infinite for each $i \in U$.
\end{proof}

\begin{proof}[A proof of Proposition~\ref{prop:di_ultra} with {\L}o\'s's Theorem]
	We may (and do) assume that $K \in \mathbb{N}$ and $U:= \{i\in\mathbb N : C_{\textrm{DI}}(\mc A_i) \leq K^2\}\in\mc U$. Then by Lemma \ref{lem:sentence_for_di} we see that $\varphi_K^{\mc A_i} =0$ for each $i \in U$. Hence by {\L}o\'s's Theorem we obtain $0= \lim_{i \rightarrow \mc U} \varphi_K^{\mc A_i} = \varphi_K^{(\mc A_i)_{\mc U}}$. In view of Lemma \ref{lem:sentence_for_di} again this is equivalent to $C_{\textrm{DI}}((\mc A_i)_{\mc U}) \leq K^2$, thus $(\mc A_i)_{\mc U}$ is Dedekind-infinite.
\end{proof}

\section{Proper infiniteness}\label{sec:prop_inf}

Recall that a Banach algebra $\mc A$ is \emph{properly infinite} when there exist
idempotents $p\sim 1$ and $q\sim 1$ which are orthogonal, $pq=0=qp$.

\subsection{When the asymptotic sequence algebra is properly infinite}
The idea of the following proposition originates in \cite{rordamfriis}.

\begin{Prop}\label{liftidempfromcorona}
Let $(\mathcal{A}_n)$ be a sequence of unital Banach algebras, and let $p,q \in \asy(\mathcal{A}_n)$ be idempotents with $[p,q]=0$. Then there exist idempotents $P,Q \in \ell^\infty(\mc A_n)$ with $p= \pi(P)$, $q=\pi(Q)$ and $[P,Q] =0$.
\end{Prop}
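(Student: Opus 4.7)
The plan is to lift $p$ first, and then lift $q$ by modifying an arbitrary lift so that it commutes with the lift of $p$ before rounding to an idempotent.

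First I would lift $p$ to an idempotent $P = (p_n) \in \ell^\infty(\mc A_n)$ using exactly the argument from the first paragraph of the proof of Theorem~\ref{dfpositive} (as noted in the Remark following it): pick any lift $X = (x_n)$ of $p$, observe $\|x_n^2 - x_n\| \to 0$, and apply Proposition~\ref{approxidemp} componentwise (defining $p_n = 0$ for the finitely many $n$ where $\nu_n \geq 1/8$) to obtain an idempotent $P = (p_n)$ with $\pi(P) = p$.

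Next, let $Y = (y_n) \in \ell^\infty(\mc A_n)$ be any lift of $q$; thus $\|y_n^2 - y_n\| \to 0$ and, crucially, $\|p_ny_n - y_np_n\| \to 0$ because $[p,q]=0$ in $\asy(\mc A_n)$. Now I would symmetrise with respect to the idempotent $P$ by defining
\[ y'_n := p_n y_n p_n + (1-p_n) y_n (1-p_n). \]
By construction $[p_n, y'_n] = 0$ for every $n$. Moreover $y_n - y'_n = p_n y_n (1-p_n) + (1-p_n) y_n p_n$, and multiplying $p_ny_n - y_np_n \to 0$ on the left and on the right by $p_n$ shows that both terms $p_ny_n(1-p_n)$ and $(1-p_n)y_np_n$ tend to $0$ in norm. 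Hence $Y'=(y'_n) \in \ell^\infty(\mc A_n)$ with $\pi(Y') = \pi(Y) = q$, and, consequently, $\|(y'_n)^2 - y'_n\| \to 0$.

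Finally, I would apply Proposition~\ref{approxidemp} componentwise to $y'_n$ (again setting $q_n = 0$ on the finite bad set where $\|(y'_n)^2 - y'_n\| \geq 1/8$). This produces an idempotent $q_n \in \mc A_n$ with $\|q_n - y'_n\| \to 0$, and the \emph{moreover} clause of Proposition~\ref{approxidemp} guarantees that since $p_n$ commutes with $y'_n$, it also commutes with $q_n$. The sequence $Q = (q_n)$ is uniformly bounded (by $\|Y'\| + f_{\|Y'\|}(1/8)$ on the cofinite good set), so $Q \in \ell^\infty(\mc A_n)$, $Q^2 = Q$, $\pi(Q) = q$, and $[P,Q] = 0$, as required.

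The only real obstacle is arranging the commutation $[P,Q] = 0$ exactly (and not merely approximately) at the lifted level; this is handled by the two-step device of first replacing $Y$ by the exactly commuting $Y'$ (which is legitimate precisely because the commutator $[P,Y]$ is in $c_0(\mc A_n)$) and then invoking the $[y,a]=0 \Rightarrow [y,p]=0$ clause of Proposition~\ref{approxidemp}, which is designed for exactly this kind of argument.
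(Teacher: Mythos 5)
Your proposal is correct and follows essentially the same route as the paper: lift $p$ first via Proposition~\ref{approxidemp}, replace an arbitrary lift $Y$ of $q$ by the exactly $P$-commuting element $PYP+(1-P)Y(1-P)$ (your $Y'$ is precisely the paper's $Z$), and then round to an idempotent using the ``moreover'' clause of Proposition~\ref{approxidemp}. The only cosmetic difference is that you verify $\pi(Y')=q$ by estimating $y_n-y'_n$ directly from $[p_n,y_n]\to 0$, whereas the paper computes $(1-p)q(1-p)+pqp=q$ in the quotient; both are valid.
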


\begin{proof}
Exactly as in the proof of Theorem~\ref{dfpositive}, given an idempotent $p\in\asy(\mc A_n)$, 
we can find an idempotent $P=(p_n)\in\ell^\infty(\mc A_n)$ with $\pi(P)=p$.

Now let $Y \in \ell^\infty(\mc A_n)$ be such that $q= \pi(Y)$. Let $Z:= (1-P)Y(1-P) + PYP$, then from $[p,q]=0$ we obtain
\begin{align}
\pi(Z) = (1-p)q(1-p) + pqp = q-pq-qp+pqp+pqp = q-2pq+2p^2q=q.
\end{align}
It is clear that $[P,Z]=0$, so if we write $Z=(z_n)$, then $[p_n,z_n]=0$ for every $n \in \mathbb{N}$.

As $\pi(Z)=q$, we see that $q=q^2$ is equivalent to $\textstyle{\lim_{n \rightarrow \infty} \Vert z_n - z^2_n \Vert =0}$. Let $\mu_n := \Vert z_n - z^2_n \Vert$ for every $n \in \mathbb{N}$. There is $M$ such that for every $n \geq M$ we have $\mu_n  < 1/8$. In view of Proposition \ref{approxidemp}, for every $n \geq M$ there is an idempotent $q'_n \in \mathcal{A}_n$ with $\textstyle{\Vert z_n - q'_n \Vert \leq f_{\Vert z_n \Vert}(\mu_n) \leq f_{\Vert Z \Vert}(\mu_n)} \leq f_{\Vert Z \Vert}(1/8)$. Moreover, for every $n \in \mathbb{N}$ we have $[q'_n,y] =0$ whenever $y \in \mathcal{A}_n$ is such that $[z_n,y]=0$. In particular, $[q'_n,p_n]=0$ for all $n \geq M$. 

By continuity of $f_{\|Z\|}$, it follows that $\textstyle{\lim_{n \geq N} f_{\|Z\|}(\mu_n) =0}$; consequently $\lim_{n \geq N} \Vert z_n - q'_n \Vert =0$. 
For every $n \in \mathbb{N}$ we define
\begin{align}
q_n := \left\{
\begin{array}{l l}
q'_n & \quad \text{if  } n \geq M, \\
0 & \quad \text{otherwise.} \\
\end{array} \right.
\end{align}
Since $\Vert q'_n \Vert \leq \Vert q'_n -z_n \Vert + \Vert z_n \Vert \leq f_{\Vert Z \Vert}(1/8) + \Vert Z \Vert$ for all $n \geq M$, it follows that $Q:=(q_n)$ is an idempotent in $\ell^\infty(\mc A_n)$.  We observe that $q= \pi(Q)$ by $\textstyle{\lim_{n \geq M} \Vert z_n - q'_n \Vert =0}$. It is clear from the above that $[P,Q]=0$, thus concluding the proof.
\end{proof}

\begin{Thm}\label{positivepi}
Let $(\mathcal{A}_n)$ be a sequence of unital Banach algebras such that $\asy(\mathcal{A}_n)$ is properly infinite. Then there is an $N \in \mathbb{N}$ such that $\mathcal{A}_n$ is properly infinite for every $n \geq N$. 
\end{Thm}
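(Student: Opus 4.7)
The plan is to combine three ingredients: the lifting result Proposition~\ref{liftidempfromcorona}, the Neumann series manipulation already familiar from the proof of Proposition~\ref{diconstant}, and the perturbation lemma Lemma~\ref{zemanek}.

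First, since $\asy(\mathcal{A}_n)$ is properly infinite, I fix orthogonal idempotents $p,q\in\asy(\mathcal{A}_n)$ with $p\sim 1$ and $q\sim 1$. Orthogonality implies $[p,q]=0$, so Proposition~\ref{liftidempfromcorona} yields commuting idempotents $P=(p_n),Q=(q_n)\in\ell^\infty(\mathcal{A}_n)$ with $\pi(P)=p$ and $\pi(Q)=q$. These are only asymptotically orthogonal, not orthogonal on the nose, but replacing $P,Q$ by $P':=P(1-Q)$ and $Q':=Q(1-P)$ produces a pair which (since $P$ and $Q$ commute) are still idempotents, are genuinely orthogonal componentwise, and still satisfy $\pi(P')=p-pq=p$, $\pi(Q')=q$. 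Relabel these as $P=(p_n)$ and $Q=(q_n)$.

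Next, I exploit $p\sim 1$ and $q\sim 1$. Pick $A=(a_n),B=(b_n),C=(c_n),D=(d_n)\in\ell^\infty(\mathcal{A}_n)$ lifting the witnessing elements, so $\|a_nb_n-1_n\|\to 0$, $\|b_na_n-p_n\|\to 0$, and similarly for $C,D,q_n$. For $n$ large enough $u_n:=a_nb_n\in\inv(\mathcal{A}_n)$ by Neumann series, with $\|u_n^{-1}-1_n\|\to 0$. Setting $a'_n:=u_n^{-1}a_n$, $b'_n:=b_n$, we get $a'_nb'_n=1_n$, so $b'_na'_n=b_nu_n^{-1}a_n$ is an idempotent equivalent to $1_n$ in $\mathcal{A}_n$. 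The bound
\[
\|b'_na'_n-p_n\|\leq \|b_n\|\,\|u_n^{-1}-1_n\|\,\|a_n\|+\|b_na_n-p_n\|
\]
tends to $0$, so eventually $\|b'_na'_n-p_n\|<1$. Lemma~\ref{zemanek} then gives $p_n\sim b'_na'_n$, and transitivity (proved at the start of Section~\ref{sec:prelin}) yields $p_n\sim 1_n$ for all $n$ beyond some threshold. The same argument, applied to $C,D,q_n$, gives $q_n\sim 1_n$ for all sufficiently large $n$.

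Combining, there is $N\in\mathbb{N}$ such that for every $n\geq N$ the algebra $\mathcal{A}_n$ contains orthogonal idempotents $p_n,q_n$ with $p_n\sim 1_n$ and $q_n\sim 1_n$, i.e.\ $\mathcal{A}_n$ is properly infinite. The main technical point to be careful about is the passage from commuting lifts (what Proposition~\ref{liftidempfromcorona} directly provides) to strictly orthogonal lifts; the ``diagonal'' trick $P\mapsto P(1-Q)$, $Q\mapsto Q(1-P)$ handles this cleanly because $P$ and $Q$ commute. After that, the combination of the Neumann series estimate and Lemma~\ref{zemanek} does exactly what it did in Section~\ref{sec:dedekind_finite}: it transfers the algebraic equivalence $p\sim 1$ in the quotient down to $p_n\sim 1_n$ on a tail of indices.
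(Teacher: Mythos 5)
Your proof is correct and follows essentially the same route as the paper's: lift $p,q$ to commuting idempotents via Proposition~\ref{liftidempfromcorona}, use the Neumann series to produce idempotents $b_nu_n^{-1}a_n\sim 1_n$, and transfer the equivalence to $p_n$ via Lemma~\ref{zemanek} and transitivity of $\sim$. The only difference is cosmetic: where you force exact orthogonality of the lifts by replacing $P,Q$ with $P(1-Q),Q(1-P)$, the paper instead observes that $p_nq_n$ is a product of commuting idempotents, hence itself an idempotent, and an idempotent of norm less than $1$ must vanish (compare Remark~\ref{rem:c_di_defn}); both devices work.
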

\begin{proof}	
Let $p,q \in \asy(\mathcal{A}_n)$ be mutually orthogonal idempotents with $p,q \sim 1$. By Proposition \ref{liftidempfromcorona} there exist idempotents $P,Q \in \ell^\infty(\mc A_n)$ with $p= \pi(P)$, $q= \pi(Q)$ and $[P,Q]=0$. It follows from $p,q \sim 1$ that there exist $A= (a_n)$, $B= (b_n)$, $C= (c_n)$, $D= (d_n) \in \ell^\infty(\mc A_n)$ such that $1 = \pi(A) \pi(B)$, $p = \pi(B) \pi(A)$, and $1 = \pi(C) \pi(D)$, $q = \pi(D) \pi(C)$. Now let us pick $\delta \in (0,1)$ sufficiently small, depending on the norms of $A,B,C,D$. More precisely, we require $\delta \in (0,1)$ to satisfy
\begin{align}
\Vert A \Vert \Vert B \Vert(1- \delta)^{-1} \delta + \delta < 1/2, \quad \Vert C \Vert \Vert D \Vert (1- \delta)^{-1} \delta + \delta < 1/2.
\end{align}
Write $P=(p_n)$ and $Q= (q_n)$, then $pq =0 =qp$ is equivalent to $\lim_{n \rightarrow \infty} \Vert p_n q_n \Vert =0= \lim_{n \rightarrow \infty} \Vert q_n p_n \Vert$. So there is $M \in \mathbb{N}$ such that $\Vert p_n q_n \Vert, \Vert q_n p_n \Vert < \delta$ for every $n \geq M$, and since $p_n,q_n \in \mathcal{A}_n$ are commuting idempotents it follows that $p_nq_n$ is an idempotent of small norm,
so $p_n q_n =0$; similarly $q_n p_n=0$.

The aim of the following is to show that the idempotents $p_n$ and $q_n$ are not only eventually orthogonal, but equivalent to the unit element $1_n$ of $\mathcal{A}_n$, eventually.

We observe that $1= \pi(A) \pi(B)$ is equivalent to $\textstyle{\lim_{n \rightarrow \infty} \Vert 1_n - a_n b_n \Vert =0}$, thus there is $M' \geq M$ such that $\Vert 1_n - a_n b_n \Vert < \delta$ for every $n \geq M'$. Consequently, by the Carl Neumann series $u_n:= a_n b_n \in \inv(\mathcal{A}_n)$, $\| u_n^{-1} \| <(1- \delta)^{-1}$ and $\Vert 1_n - u_n^{-1} \Vert < \delta (1- \delta)^{-1}$ for all $n \geq M'$. Thus we can define $p'_n := b_n u^{-1}_n a_n$ for all $n \geq M'$ and it is immediate that $p'_n \in \mathcal{A}_n$ is an idempotent with $p'_n \sim 1_n$.  We also have that $\textstyle{\sup_{n \geq M'} \Vert p'_n \Vert \leq \Vert A \Vert \Vert B \Vert (1- \delta)^{-1}}$.

Analogously, there is $M'' \geq M'$ such that $v_n:= c_n d_n \in \inv(\mathcal{A}_n)$, $\|v_n^{-1} \| < (1- \delta)^{-1}$ and $\Vert 1_n - v_n^{-1} \Vert < \delta (1- \delta)^{-1}$ for all $n \geq M''$.  Then define $q'_n := d_n v^{-1}_n c_n$ for all $n \geq M''$, so that $q_n'$ is an idempotent with $q'_n \sim 1_n$, and $\sup_{n \geq M''} \Vert q'_n \Vert \leq \Vert C \Vert \Vert D \Vert (1- \delta)^{-1}$.

The equality $p = \pi(B) \pi(A)$ is equivalent to saying that $\lim_{n \rightarrow \infty} \Vert p_n - b_n a_n \Vert =0$, and similarly $q = \pi(D) \pi(C)$ is equivalent to $\lim_{n \rightarrow \infty} \Vert q_n - d_n c_n \Vert =0$.  So there is $N \geq M''$ such that $\Vert p_n - b_n a_n \Vert < \delta$ and $\Vert q_n - d_n c_n \Vert < \delta$ whenever $n \geq N$.

For every $n \geq N$ we have
\begin{align}\label{closeidemps}
\Vert p'_n -p_n \Vert & \leq \Vert b_n u_n^{-1} a_n -b_n a_n \Vert + \Vert b_n a_n - p_n \Vert \notag \\
& \leq \Vert b_n \Vert \Vert u_n^{-1} -1_n \Vert \Vert a_n \Vert + \Vert b_n a_n - p_n \Vert \notag \\
& \leq \Vert A \Vert \Vert B \Vert (1- \delta)^{-1} \delta + \delta \notag \\
& < 1/2.
\end{align}
Therefore by Lemma \ref{zemanek} it follows that $p'_n \sim p_n$, and since $\sim$ is an equivalence relation on the set of idempotents of $\mathcal{A}_n$, we have $p_n \sim 1_n$. Similarly, we conclude $q_n \sim 1_n$ for $n \geq N$. Since $p_n$ and $q_n$ are orthogonal, the claim follows.
\end{proof}

\begin{Rem}
We see from the first paragraph of the proof of Theorem \ref{positivepi} that pairs of mutually orthogonal idempotents from $\asy(\mc A_n)$ can be lifted to mutually orthogonal idempotents in $\ell^{\infty}(\mc A_n)$. More precisely, if $p,q \in \asy(\mc A_n)$ are idempotents with $pq=0=qp$, then there exist idempotents $P, Q= \in \ell^{\infty}(\mc A_n)$ such that $p=\pi(P)$, $q= \pi(Q)$, and $PQ = 0 = QP$.
\end{Rem}

\subsection{An application to inductive limits of unital Banach algebras}
The construction of inductive limits of unital Banach algebras is given in \cite[Section~3.3]{blackadar1} and \cite[Section~1.3.4]{Palmer}, for
example. For us it will be enough to use the characterisation in terms of a universal
property.  Inductive limits seem to be more commonly considered in the setting of $C^*$-algebras
(where all connecting maps are contractions) or for locally convex spaces.  In the general Banach
algebra setting there are some subtleties, which we note below.

Let $I$ be a directed set and let $(\mc A_i)_{i\in I}$ be a family of unital Banach algebras.
We suppose that for $i\leq j$ there is a bounded unital homomorphism $\varphi_{j,i}:\mc A_i
\rightarrow \mc A_j$, called the \emph{compatibility morphism}, which satisfies that
$\varphi_{i,i}=\id_{\mathcal{A}_i}$ for each $i \in I$, $\varphi_{k,j} \circ \varphi_{j,i} = \varphi_{k,i}$ for
$i\leq j\leq k$, and $\limsup_{j\geq i} \|\varphi_{j,i}\|<\infty$ for each $i \in I$.  We remark that the construction
will still work under the weaker condition that for each $i$ there is $K_i$ with $\limsup_{j\geq i}
\|\varphi_{j,i}(a)\| \leq K_i\|a\|$ for $a\in \mc A_i$.  However, this is not equivalent to the
stronger condition for general directed sets $I$: the Uniform Boundedness Principle does not apply,
due to the use of $\limsup$ (this is erroneously claimed in \cite{blackadar1, Palmer}; the claim would
hold with $I=\mathbb N$).

The (Banach algebra) inductive limit $\mc A = \varinjlim \mathcal{A}_i$ is uniquely (up to
isometric isomorphism) characterised by the universal property that:
\begin{enumerate}
\item for each $i\in I$ there is a bounded unital algebra homomorphism $\varphi_i:\mc A_i
\rightarrow \mc A$ with $\|\varphi_i(a)\| \leq \limsup_{j\geq i} \|\varphi_{j,i}(a)\|$ for
$a\in A_i$;
\item for $i\leq j$ we have that $\varphi_i = \varphi_j \circ \varphi_{j,i}$;
\item if $\mc B$ is another unital Banach algebra with bounded unital algebra homomorphisms $\phi_i:\mc A_i\rightarrow\mc B$ with $\phi_i = \phi_j\circ\varphi_{j,i}$ for $i\leq j$, and with $\|\phi_i(a)\|\leq \limsup_{j\geq i} \|\varphi_{j,i}(a)\|$ for each $i\in I, a\in A_i$, then there is a
unique contractive unital algebra homomorphism $\phi:\mc A\rightarrow\mc B$ with $\phi\circ\varphi_i = \phi_i$
for each $i \in I$.
\end{enumerate}
These conditions then imply that in (1) we have equality: $\|\varphi_i(a)\| = \limsup_{j\geq i} \|\varphi_{j,i}(a)\|$ for $i \in I$ and $a\in A_i$.
The universal property, (3), in particular uniqueness of $\phi$, implies that the union of the images of the $\varphi_i$ are dense in $\varinjlim \mathcal{A}_i$.

We remark that without the rather explicit norm condition, we do not seem to obtain a universal
condition, at this level of generality.  If each $\mc A_i$ is a $C^*$-algebra with each
compatibility morphism a $*$-homomorphism, then $\mc A$ is a $C^*$-algebra, and each compatibility
morphism is a contraction.  Let now $\mc B$ be another $C^*$-algebra with $*$-homomorphisms
$\phi_i: \mc A_i\rightarrow \mc B$ with $\phi_i = \phi_j\circ\varphi_{j,i}$ for each $i\leq j$.
Then for $a\in\mc A_i$ we have $\|\phi_i(a)\| = \|\phi_j(\varphi_{j,i}(a))\|
\leq \|\varphi_{j,i}(a)\|$ for each $j\geq i$, and so the norm condition is automatic in this
situation.

\begin{Prop}\label{injlimasy}
Let $( (\mathcal{A}_n), (\varphi_{i,j}) )$ be an inductive system of unital Banach algebras,
indexed by $\mathbb N$.  There is an isometric unital algebra homomorphism
$\theta: \varinjlim \mathcal{A}_n \rightarrow \asy(\mathcal{A}_n)$.
\end{Prop}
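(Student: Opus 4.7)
The plan is to construct $\theta$ via the universal property of the inductive limit, after first building compatible maps $\theta_i : \mathcal{A}_i \rightarrow \asy(\mathcal{A}_n)$ for each $i \in \mathbb{N}$, and then arguing isometry using density of the union of the images of the $\varphi_i$.

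First I would define, for each $i \in \mathbb{N}$ and each $a \in \mathcal{A}_i$, the sequence $X_i(a) = (x_n)_{n} \in \ell^\infty(\mathcal{A}_n)$ by $x_n = \varphi_{n,i}(a)$ for $n \geq i$ and $x_n = 0$ for $n < i$. This sequence lies in $\ell^\infty(\mathcal{A}_n)$ because $\limsup_{n} \|\varphi_{n,i}\| < \infty$ forces $\sup_{n \geq i} \|\varphi_{n,i}\| < \infty$ (as a sequence of real numbers that is eventually bounded and individually finite). Set $\theta_i(a) := \pi(X_i(a))$. Since initial zeros do not affect the limsup of a sequence, we obtain
\[ \|\theta_i(a)\| = \limsup_{n \to \infty} \|x_n\| = \limsup_{n \geq i} \|\varphi_{n,i}(a)\| = \|\varphi_i(a)\|, \]
using the equality in clause (1) of the universal property.

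Next I would verify that each $\theta_i$ is a bounded unital algebra homomorphism, and that the family $(\theta_i)_{i \in \mathbb{N}}$ is compatible, i.e.\ $\theta_j \circ \varphi_{j,i} = \theta_i$ for $i \leq j$. Both are routine: for compatibility, the sequences $X_j(\varphi_{j,i}(a))$ and $X_i(a)$ agree at every position $n \geq j$, so their difference is supported on the finite set $\{i, \ldots, j-1\}$ and thus lies in $c_0(\mathcal{A}_n)$; unitality of $\theta_i$ follows similarly since the sequence $(1_n - \varphi_{n,i}(1_i))_n$ vanishes for $n \geq i$. Moreover, the norm computation above gives $\|\theta_i(a)\| \leq \limsup_{j \geq i} \|\varphi_{j,i}(a)\|$ (in fact with equality), so the hypothesis of clause (3) in the universal property is satisfied. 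I would then invoke (3) to obtain a unique contractive unital algebra homomorphism $\theta : \varinjlim \mathcal{A}_n \rightarrow \asy(\mathcal{A}_n)$ with $\theta \circ \varphi_i = \theta_i$ for every $i$.

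Finally, to upgrade $\theta$ from contractive to isometric, I would combine two facts: (i) for every $i \in \mathbb{N}$ and every $a \in \mathcal{A}_i$ one has $\|\theta(\varphi_i(a))\| = \|\theta_i(a)\| = \|\varphi_i(a)\|$, so $\theta$ is isometric on the image of each $\varphi_i$; and (ii) $\bigcup_i \varphi_i(\mathcal{A}_i)$ is dense in $\varinjlim \mathcal{A}_n$ by the uniqueness part of (3). A standard density/continuity argument — approximate $b$ by a sequence $b_k \in \bigcup_i \varphi_i(\mathcal{A}_i)$ and use $\|\theta(b)\| = \lim_k \|\theta(b_k)\| = \lim_k \|b_k\| = \|b\|$ together with the contractive bound — then yields that $\theta$ is isometric on all of $\varinjlim \mathcal{A}_n$.

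The main subtlety I anticipate is keeping track of the norm conditions on the compatibility morphisms: the identity $\|\varphi_i(a)\| = \limsup_{j \geq i} \|\varphi_{j,i}(a)\|$ is essential both for showing $\theta_i$ satisfies the hypothesis of the universal property and for recognising the asymptotic sequence algebra norm as the inductive limit norm. Once this is clean, all other verifications (homomorphism, unital, compatibility, density) are essentially bookkeeping.
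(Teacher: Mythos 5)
Your proposal is correct and follows essentially the same route as the paper: the same lifted sequences $(0,\ldots,0,\varphi_{i,i}(a),\varphi_{i+1,i}(a),\ldots)$, the same appeal to the universal property to obtain a contractive $\theta$, the same identification $\|\theta_i(a)\|=\limsup_{j\geq i}\|\varphi_{j,i}(a)\|=\|\varphi_i(a)\|$, and the same density argument to upgrade to isometry. No gaps.
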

\begin{proof}
We use the universal property with $\mc B = \asy(\mathcal{A}_n)$.  Denote by $\pi:
\ell^\infty(\mc A_n)\rightarrow\mc B$ the natural quotient map.
For each $n$ define $\phi_n:\mc A_n\rightarrow\mc B$ by
\[ \phi_n(a) = \pi\big( \underbrace{0, 0,  \ldots, 0}_{(n-1) \text{ terms}},
a, \varphi_{n+1,n}(a), \ldots, \varphi_{i,n}(a), \ldots \big). \]
It is easy to see that the family $(\phi_n)$ satisfies the required commutation relations.
Further, by the definition of the norm on $\mc B$, we have that $\|\phi_n(a)\|
= \limsup_{k\geq n} \|\varphi_{k,n}(a)\|$ for all $n \in \mathbb{N}$, which implies the required norm relation.
There is hence a unital contractive homomorphism $\theta:\varinjlim \mathcal{A}_n \rightarrow
\asy(\mathcal{A}_n)$ with $\phi_n = \theta\circ\varphi_n$ for each $n \in \mathbb{N}$.
By our remark about condition (1), it follows that $\|\phi_n(a)\| = \|\varphi_n(a)\|$ for each $n\in\mathbb N$ and $a\in \mc A_n$.  Thus $\theta$ is actually isometric on the image of $\varphi_n$, and as the union of such images is dense in $\varinjlim \mathcal{A}_n$, it follows that $\theta$ is isometric, as claimed.
\end{proof}

The following lemma is straightforward.

\begin{Lem}\label{propinfpres}
Let $\mathcal{A}, \mathcal{B}$ be unital algebras and let $\psi: \, \mathcal{A} \rightarrow
\mathcal{B}$ be an algebra homomorphism which preserves the unit. If $\mathcal{A}$ is properly
infinite, then so is $\mathcal{B}$.
\end{Lem}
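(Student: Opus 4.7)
The plan is to unwind the definition of proper infiniteness in $\mathcal{A}$, apply $\psi$ to each witnessing element, and check that the images continue to witness proper infiniteness in $\mathcal{B}$. The only thing we need beyond $\psi$ being an algebra homomorphism is that $\psi(1_{\mathcal{A}}) = 1_{\mathcal{B}}$.

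More explicitly, suppose $\mathcal{A}$ is properly infinite. Choose orthogonal idempotents $p,q \in \mathcal{A}$ with $p \sim 1_{\mathcal{A}}$ and $q \sim 1_{\mathcal{A}}$, and pick $a,b,c,d \in \mathcal{A}$ implementing these equivalences, so that $ab = p$, $ba = 1_{\mathcal{A}}$, $cd = q$, and $dc = 1_{\mathcal{A}}$. First I would set $p' := \psi(p)$ and $q' := \psi(q)$, and observe that $(p')^2 = \psi(p^2) = \psi(p) = p'$, similarly $(q')^2 = q'$, while $p'q' = \psi(pq) = \psi(0) = 0$ and analogously $q'p' = 0$, so that $p', q'$ are orthogonal idempotents in $\mathcal{B}$.

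Next I would verify the equivalences in $\mathcal{B}$: since $\psi$ is an algebra homomorphism preserving the unit,
\begin{equation*}
\psi(a)\psi(b) = \psi(ab) = \psi(p) = p', \qquad \psi(b)\psi(a) = \psi(ba) = \psi(1_{\mathcal{A}}) = 1_{\mathcal{B}},
\end{equation*}
which gives $p' \sim 1_{\mathcal{B}}$. The identical argument with $c,d$ in place of $a,b$ yields $q' \sim 1_{\mathcal{B}}$. Thus $p', q' \in \mathcal{B}$ are orthogonal idempotents with $p' \sim 1_{\mathcal{B}}$ and $q' \sim 1_{\mathcal{B}}$, which means $\mathcal{B}$ is properly infinite.

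There is no real obstacle here, as the proof is merely functorial: proper infiniteness is defined purely by the existence of finitely many elements satisfying certain algebraic identities, and any unital algebra homomorphism transports such data. The only point worth flagging is the essential use of $\psi(1_{\mathcal{A}}) = 1_{\mathcal{B}}$ for the relations $\psi(b)\psi(a) = 1_{\mathcal{B}}$ and $\psi(d)\psi(c) = 1_{\mathcal{B}}$; without unitality, one would merely obtain that $\mathcal{B}$ contains orthogonal idempotents equivalent to $\psi(1_{\mathcal{A}})$, which is weaker.
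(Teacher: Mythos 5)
Your proof is correct, and it is precisely the routine verification the authors have in mind: the paper states this lemma without proof, calling it straightforward, and your argument (push the witnessing elements $p,q,a,b,c,d$ through $\psi$ and check the identities, using $\psi(1_{\mathcal{A}})=1_{\mathcal{B}}$ for the relations $\psi(b)\psi(a)=1_{\mathcal{B}}$ and $\psi(d)\psi(c)=1_{\mathcal{B}}$) is exactly the intended one.
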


\begin{Cor}\label{indlim}
Let $(\mathcal{A}_n)$ be an inductive system of unital Banach algebras. If $\varinjlim \mathcal{A}_n$ is properly infinite then there is $N \in \mathbb{N}$ such that $\mathcal{A}_n$ is properly infinite for every $n \geq N$.
\end{Cor}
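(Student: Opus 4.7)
The strategy is to chain together three results already established in the excerpt: the isometric embedding of the inductive limit into the asymptotic sequence algebra (Proposition~\ref{injlimasy}), the fact that proper infiniteness is preserved by unital homomorphisms (Lemma~\ref{propinfpres}), and the characterisation of when $\asy(\mc A_n)$ being properly infinite forces eventual proper infiniteness of the $\mc A_n$ (Theorem~\ref{positivepi}). The proof is essentially a one-line composition, with no genuine obstacle to overcome beyond verifying that the hypotheses of each result line up.

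First, I would invoke Proposition~\ref{injlimasy} to obtain the (isometric) unital algebra homomorphism $\theta: \varinjlim \mathcal{A}_n \rightarrow \asy(\mathcal{A}_n)$. Since by assumption $\varinjlim \mathcal{A}_n$ is properly infinite and $\theta$ is a unital homomorphism, Lemma~\ref{propinfpres} directly yields that $\asy(\mathcal{A}_n)$ is properly infinite as well. (Isometry is more than we need here; we only use that $\theta$ is unital and multiplicative, so that the witnessing orthogonal idempotents $p \sim 1, q \sim 1$ in $\varinjlim \mathcal{A}_n$ are mapped to orthogonal idempotents $\theta(p) \sim 1, \theta(q) \sim 1$ in $\asy(\mathcal{A}_n)$.)

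Finally, applying Theorem~\ref{positivepi} to the sequence $(\mathcal{A}_n)$ produces the desired $N \in \mathbb{N}$ such that $\mathcal{A}_n$ is properly infinite for every $n \geq N$. The only reason this corollary is not entirely routine is that it hides the real content in Proposition~\ref{injlimasy}: it is the existence of an isometric unital embedding of $\varinjlim \mathcal{A}_n$ into $\asy(\mathcal{A}_n)$ that allows any unital, multiplicatively stable property (such as being properly infinite) to be transported from the inductive limit to the asymptotic sequence algebra, and thereby read off the component algebras.
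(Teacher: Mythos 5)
Your proposal is correct and follows exactly the same route as the paper: Proposition~\ref{injlimasy} gives the unital homomorphism $\theta$ into $\asy(\mathcal{A}_n)$, Lemma~\ref{propinfpres} transfers proper infiniteness to $\asy(\mathcal{A}_n)$, and Theorem~\ref{positivepi} finishes. Your parenthetical observation that only unitality and multiplicativity of $\theta$ (not isometry) are needed is also accurate.
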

\begin{proof}
By Proposition \ref{injlimasy} there is an (contractive) algebra homomorphism $\theta: \, \varinjlim \mathcal{A}_n \rightarrow \asy(\mathcal{A}_n)$ which preserves the unit, hence by 
Lemma~\ref{propinfpres} the asymptotic sequence algebra $\asy(\mathcal{A}_n)$ is properly infinite.
The claim now follows from Theorem~\ref{positivepi}.
\end{proof}

\begin{Rem}
It is an unpublished observation of James Gabe that for $C^*$-algebras, Corollary \ref{indlim} follows from the semiprojectivity of the Cuntz algebra $\mathcal{O}_{\infty}$. We would like to thank him for communicating this result to us.
\end{Rem}

\subsection{When the sequence consists of properly infinite algebras}

We first demonstrate that the converse of Theorem~\ref{positivepi} is false in general.
For a unital Banach algebra $\mc A$ we define
\begin{align}
C_{\text{PI}}(\mathcal{A}):= \inf \lbrace \|a\| \|b\| \|c\| \|d\| : a,b,c,d\in\mathcal{A},
ab=1=cd, ad = 0 = cb \rbrace.
\end{align}
It will also be useful to define an auxiliary constant 
\begin{align}
C'_{\text{PI}}(\mathcal{A}):= \inf \lbrace \Vert p \Vert \Vert q \Vert : \; p,q \in \mathcal{A}, \, p^2=p,q^2=q, \, p \sim 1 \sim q, \, p \perp q \rbrace.
\end{align}
Notice that if we have $a,b,c,d$ as in the definition of $C_{\text{PI}}(\mathcal{A})$ then setting $p=ba, q=dc$ then $p^2 = baba = ba = p$ and similarly $q^2=q$, $pq=qp=0$, and $p\sim 1\sim q$ because $ab=1=cd$.  As then $\|p\| \|q\| \leq \|a\| \|b\| \|c\| \|d\|$ we see that $C'_{\text{PI}}(\mathcal A) \leq C_{\text{PI}}(\mathcal A)$.  If $\mathcal{A}$ is properly infinite then $1 \leq C_{\text{PI}}(\mathcal{A}) < + \infty$, otherwise $C_{\text{PI}}(\mathcal{A}) = + \infty$.  Clearly $C'_{\text{PI}}(\mathcal A) = +\infty$ if and only if $C_{\text{PI}}(\mathcal A) = +\infty$.  

As when we considered an algebra being Dedekind-infinite, the constant $C_{\text{PI}}'$ seems more
natural, but $C_{\text{PI}}$ seems more useful.  However, for being properly infinite, we shall
actually obtain a complete characterisation (see Proposition~\ref{piconstant} and
Proposition~\ref{piconstant_conv}) using $C_{\text{PI}}$.  Furthermore, Proposition~\ref{prop:7}
shows that $C_{\text{PI}}'$ and $C_{\text{PI}}$ are not comparable.

Notice that if $\mc A$ is properly infinite, then it is Dedekind-infinite, because if $p,q$
are orthogonal with $p\sim 1$ and $q\sim 1$, we cannot have $p=1$.

\begin{Lem}\label{lem:11}
Let $\mc A$ be properly infinite.  Then $C_{\text{DI}}(\mc A) \leq C_{\text{PI}}(\mc A)$
and $C'_{\text{DI}}(\mc A) \leq C'_{\text{PI}}(\mc A)$.
\end{Lem}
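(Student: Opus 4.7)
The plan is to observe that any witnesses of proper infiniteness immediately furnish witnesses of Dedekind-infiniteness, losing essentially nothing in the norm bound because the ``other half'' of the proper-infiniteness data contributes a factor bounded below by $1$.

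More precisely, suppose $\mc A$ is properly infinite and fix $a, b, c, d \in \mc A$ with $ab = 1 = cd$ and $ad = 0 = cb$, so that $p := ba$ and $q := dc$ are orthogonal idempotents with $p \sim 1 \sim q$ (as in the discussion preceding the lemma). The first step is to show $p \neq 1$: if $p = 1$ then $q = pq = 0$, but $q$ is a non-zero idempotent (indeed, $q = 0$ together with $q \sim 1$ would force $1 = cd \cdot 1 = c q d \cdot \ldots$, or more directly $1 \sim q = 0$ forces $1 = 0$, contradicting that $\mc A$ is properly infinite, which entails $\mc A \neq 0$). Hence $a, b$ witness Dedekind-infiniteness, giving $C_{\text{DI}}(\mc A) \leq \|a\|\|b\|$. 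Since $\|c\|\|d\| \geq \|cd\| = 1$, we get $C_{\text{DI}}(\mc A) \leq \|a\|\|b\|\|c\|\|d\|$. Taking the infimum over all admissible quadruples $(a,b,c,d)$ yields $C_{\text{DI}}(\mc A) \leq C_{\text{PI}}(\mc A)$.

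For the second inequality, the same pair $p, q$ satisfies $p \sim 1$ and $p \neq 1$, so $p$ witnesses that $\mc A$ is Dedekind-infinite and $C'_{\text{DI}}(\mc A) \leq \|p\|$. Since $q$ is a non-zero idempotent we have $\|q\| = \|q^2\| \leq \|q\|^2$, hence $\|q\| \geq 1$, which gives $\|p\| \leq \|p\|\|q\|$. Taking the infimum over all admissible pairs $(p,q)$ yields $C'_{\text{DI}}(\mc A) \leq C'_{\text{PI}}(\mc A)$.

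There is no real obstacle here: the only subtlety is verifying $p \neq 1$ (equivalently ruling out $q = 0$), which rests on the fact that a properly infinite unital algebra is non-zero and that $q \sim 1$ forces $q \neq 0$ in such an algebra.
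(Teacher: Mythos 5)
Your proposal is correct and follows essentially the same route as the paper: pass from a quadruple $(a,b,c,d)$ witnessing $C_{\text{PI}}$ to the pair $(a,b)$ witnessing $C_{\text{DI}}$ after noting $ba\neq 1$, absorb the factor $\|c\|\|d\|\geq 1$, and for the primed constants use that $q\neq 0$ forces $\|q\|\geq 1$. The paper states the second inequality as immediate and does not spell out why $ba\neq 1$; your justification of that point is exactly the observation made in the remark preceding the lemma.
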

\begin{proof}
That $C'_{\text{DI}}(\mc A) \leq C'_{\text{PI}}(\mc A)$ is clear, given the remark before the
lemma.  Let $K>C_{\text{PI}}(\mc A)$ so we can find $a,b,c,d\in\mc A$ with
$\|a\| \|b\| \|c\| \|d\| \leq K$ and $ab=cd=1, badc = dcba = 0$.  Then $ba\not=1$, and so
$C_{\text{DI}}(\mc A) \leq \|a\| \|b\|$.  As $cd=1$ we have that $\|c\| \|d\|\geq 1$ and so
$\|a\| \|b\| \leq K$, from which the result follows.
\end{proof}

First we prove a slight strengthening of Theorem~\ref{positivepi}.

\begin{Prop}\label{piconstant}
Let $(\mathcal{A}_n)$ be a sequence of unital Banach algebras such that $\asy (\mathcal{A}_n)$ is properly infinite. Then there is a $K \geq 1$ and an $N \in \mathbb{N}$ such that $C_{\text{PI}}(\mathcal{A}_n) \leq K$ for all $n \geq N$.
\end{Prop}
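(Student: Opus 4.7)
The plan is to revisit the proof of Theorem~\ref{positivepi} while tracking norms carefully, so that the witnesses to proper infiniteness in each $\mc A_n$ are uniformly bounded. Starting from orthogonal idempotents $p,q \in \asy(\mc A_n)$ with $p \sim 1 \sim q$, I would first invoke Proposition~\ref{liftidempfromcorona} to lift $p,q$ to commuting idempotents $P=(p_n), Q=(q_n) \in \ell^\infty(\mc A_n)$; next, choosing bounded sequences $A=(a_n), B=(b_n), C=(c_n), D=(d_n)$ lifting the elements implementing the equivalences $p\sim 1$ and $q\sim 1$, I would repeat the Carl Neumann argument of Theorem~\ref{positivepi} to produce, for $n$ large, inverses $u_n^{-1}$, $v_n^{-1}$ of $u_n=a_nb_n$ and $v_n=c_nd_n$, along with idempotents $p'_n = b_nu_n^{-1}a_n \sim 1_n$ and $q'_n = d_nv_n^{-1}c_n \sim 1_n$. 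The equivalences $p'_n \sim 1_n$ and $q'_n \sim 1_n$ are witnessed explicitly by $\tilde a_n = a_n, \tilde b_n = b_nu_n^{-1}$ and $\tilde c_n = c_n, \tilde d_n = d_nv_n^{-1}$, all of uniformly bounded norm, and the proof of Theorem~\ref{positivepi} already ensures $\|p_n - p'_n\|, \|q_n - q'_n\| < 1/2$ eventually.

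The next step uses Remark~\ref{rem:zemanek}, which upgrades the estimate $\|p_n - p'_n\| < 1/2$ into explicit elements $\alpha_n,\beta_n \in \mc A_n$ with $\alpha_n\beta_n = p'_n$, $\beta_n\alpha_n = p_n$, and $\|\alpha_n\|\|\beta_n\|$ uniformly bounded (since $\|p_n\|, \|p'_n\|$ are). Composing with $\tilde a_n, \tilde b_n$ by setting $\hat a_n := \tilde a_n\alpha_n$ and $\hat b_n := \beta_n\tilde b_n$, one computes $\hat a_n\hat b_n = \tilde a_n p'_n \tilde b_n = (\tilde a_n\tilde b_n)(\tilde a_n\tilde b_n) = 1_n$ and $\hat b_n\hat a_n = \beta_n p'_n\alpha_n = (\beta_n\alpha_n)^2 = p_n$, with $\|\hat a_n\|\|\hat b_n\|$ uniformly bounded. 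An analogous construction produces $\hat c_n, \hat d_n$ with $\hat c_n\hat d_n = 1_n$ and $\hat d_n\hat c_n = q_n$.

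The crucial observation is that the orthogonality conditions $\hat a_n\hat d_n = 0 = \hat c_n\hat b_n$ come for free. The identities $\hat a_n\hat b_n = 1_n$ and $\hat b_n\hat a_n = p_n$ force $\hat a_n p_n = \hat a_n\hat b_n\hat a_n = \hat a_n$ and $p_n\hat b_n = \hat b_n\hat a_n\hat b_n = \hat b_n$, and likewise $\hat c_n q_n = \hat c_n$, $q_n\hat d_n = \hat d_n$. Combined with $p_nq_n = 0 = q_np_n$ (which holds eventually, by the proof of Theorem~\ref{positivepi}), this yields
\[ \hat a_n\hat d_n = \hat a_n p_n q_n\hat d_n = 0, \qquad \hat c_n\hat b_n = \hat c_n q_n p_n\hat b_n = 0. \]
Consequently $(\hat a_n, \hat b_n, \hat c_n, \hat d_n)$ witnesses $C_{\text{PI}}(\mc A_n) \leq \|\hat a_n\|\|\hat b_n\|\|\hat c_n\|\|\hat d_n\| \leq K$ for a constant $K$ independent of $n$. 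I expect the main obstacle to be purely bookkeeping: every individual norm bound follows cleanly from the Neumann series estimate in Theorem~\ref{positivepi}'s proof and the explicit power-series bound in Remark~\ref{rem:zemanek}, but assembling them into one constant depending only on $\|A\|, \|B\|, \|C\|, \|D\|, \|P\|, \|Q\|$ needs care.
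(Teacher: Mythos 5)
Your proposal is correct and follows essentially the same route as the paper's proof: after running the argument of Theorem~\ref{positivepi} up to the estimate $\|p_n-p_n'\|<1/2$, the paper also applies Remark~\ref{rem:zemanek} to produce the implementing elements (your $\hat a_n=a_n\alpha_n$, $\hat b_n=\beta_n b_nu_n^{-1}$ coincide, up to renaming, with the paper's $\tilde a_n=a_nb_n'$, $\tilde b_n=a_n'b_nu_n^{-1}$) and deduces the orthogonality relations $\tilde a_n\tilde d_n=0=\tilde c_n\tilde b_n$ from $p_nq_n=0=q_np_n$ by the same kind of cancellation you use.
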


\begin{proof}
The proof is a refinement of the proof of Theorem~\ref{positivepi}. We shall freely use the notation therein, and assume that all the objects have already been defined and the argument is repeated up until and including the inequality $\|p_n -p'_n \| <1/2$ (equation \eqref{closeidemps}), and the analogous inequality $\| q_n - q'_n \| <1/2$ for each $n \geq N$.

By Remark~\ref{rem:zemanek}, we can find $a_n', b_n', c_n', d_n' \in \mc A_n$ such that $p_n = a_n' b_n'$, $p_n' = b_n' a_n'$, $q_n = c_n' d_n'$, $q_n' = d_n' c_n'$, and the inequalities
\begin{align}
\|a_n'\|\|b_n'\| &\leq (4/3) \|p_n\|^2 (\|2p_n-1_n\|+ 1/2)^2 \leq (4/3) \|P\|^2 (2\|P\|+ 3/2)^2 \notag \\
\|c_n'\|\|d_n'\| &\leq (4/3) \|q_n\|^2 (\|2q_n-1_n\|+ 1/2)^2 \leq (4/3) \|Q\|^2 (2\|Q\|+ 3/2)^2
\end{align}
hold for each $n \geq N$.

Let us define $\tilde{a}_n:= a_n b_n'$, $\tilde{b}_n := a_n' b_n u_n^{-1}$, and $\tilde{c}_n:= c_n d_n'$, $\tilde{d}_n := c_n' d_n v_n^{-1}$ for each $n \geq N$. Thus we find
\begin{align}
\tilde{a}_n \tilde{b}_n &= a_n b_n' a_n' b_n u_n^{-1} = a_n p_n' b_n u_n^{-1} = a_n b_n u_n^{-1} a_n b_n u_n^{-1} = u_n u_n^{-1} u_n u_n^{-1} =1_n, \notag \\
\tilde{b}_n \tilde{a}_n &= a_n' b_n u_n^{-1} a_n b_n' = a_n' p_n' b_n' = a_n' b_n' a_n' b_n' = p_n p_n = p_n,
\end{align}
and similarly $\tilde{c}_n \tilde{d}_n = 1_n$, $\tilde{d}_n \tilde{c}_n = q_n$ for each $n \geq N$.  We also have the estimates
\begin{align}
\|\tilde{a}_n \| \| \tilde{b}_n \| &\leq 
\|a_n'\| \|b_n'\| \|a_n\| \|b_n\| \|u_n^{-1}\|
\leq 4(3- 3\delta)^{-1} \| A \| \|B \|  \|P\|^2 (2\|P\|+ 3/2)^2, \notag \\
\|\tilde{c}_n \| \| \tilde{d}_n \| &\leq 4(3- 3\delta)^{-1} \| C \| \|D \|  \|Q\|^2 (2\|Q\|+ 3/2)^2. 
\end{align}

Notice finally that $\tilde{b}_n \tilde{a}_n \tilde{d}_n \tilde{c}_n = p_n q_n = 0 = q_n p_n =  \tilde{d}_n \tilde{c}_n \tilde{b}_n \tilde{a}_n$, and hence $\tilde{a}_n \tilde{d}_n = 0 = \tilde{c}_n \tilde{b}_n$. Recalling that $\delta \in (0,1)$ depends only on the norms of $A,B,C$ and $D$ we conclude $\sup_{n \geq N} C_{\text{PI}}(\mathcal{A}_n) < + \infty$.
\end{proof}

We now aim to construct counter-examples to the converse of Theorem~\ref{positivepi}, for which we continue to
use semigroup algebras.  However, we now need to add a ``zero element''.

We say that $S$ is a \textit{monoid with a zero element} if $S$ is a monoid with at least two
elements and there exists a $\lozenge \in S$ such that $\lozenge s = \lozenge = s \lozenge$ for
all $s \in S$.  If such a $\lozenge \in S$ exists then it is necessarily unique.  As we assume that
$S$ has more than one element, we have that $\lozenge$ is different from the multiplicative
identity $e \in S$.

Let $\omega: \, S \rightarrow (0, + \infty)$ be a weight on $S$. Let $\mu:= \omega \vert_{S \setminus \lbrace \lozenge \rbrace}$, then $\mu: \, S \setminus \lbrace \lozenge \rbrace \rightarrow (0, + \infty)$ is such that $\mu(e)=1$. Every $\mu: \, S \setminus \lbrace \lozenge \rbrace \rightarrow (0, + \infty)$ arising in this way, as a restriction of a weight, will be referred to as a \textit{quasi-weight}.

We now explain how to define a product on the Banach space $\ell^1(S\setminus \{ \lozenge \}, \mu)$ (see also \cite[Section~3.2]{dlr} for a similar treatment).
This is accomplished by identifying $\ell^1(S\setminus \{ \lozenge \}, \mu)$ with the quotient
algebra $\ell^1(S, \omega) / \mathbb{C} \delta_{\lozenge}$.  With more details, we first notice
that $\mathbb{C} \delta_{\lozenge}$ is a closed two-sided ideal in $(\ell^1(S, \omega), \ast)$. Let $\pi: \, \ell^1(S, \omega) \rightarrow \ell^1(S, \omega) / \mathbb{C} \delta_{\lozenge}$ denote the quotient map. The symbol $\cdot$ will stand for the product on $\ell^1(S, \omega) / \mathbb{C} \delta_{\lozenge}$ induced by $\ast$. Let us consider the restriction map
\begin{align}
\psi: \; \ell^1(S, \omega) \rightarrow \ell^1(S\setminus \lbrace \lozenge \rbrace, \mu); \quad f \mapsto f \vert_{S \setminus \lbrace \lozenge \rbrace}.
\end{align}
This is a linear contractive surjection with $\Ker(\psi) = \mathbb{C} \delta_{\lozenge}$. 
Moreover, it also immediately follows from the definition that $\psi$ maps the open unit ball of $\ell^1(S, \omega)$ onto the open unit ball of $\ell^1(S\setminus \lbrace \lozenge \rbrace, \mu)$. Consequently, there is an isometric linear bijection
\begin{align}
\varphi: \; \ell^1(S, \omega) / \mathbb{C} \delta_{\lozenge} \rightarrow \ell^1(S\setminus \lbrace \lozenge \rbrace, \mu) 
\end{align}
which satisfies $\varphi \circ \pi = \psi$. This allows us to define a product on $\ell^1(S\setminus \lbrace \lozenge \rbrace, \mu)$ by setting
\begin{align}
f \# g := \varphi(\varphi^{-1}(f) \cdot \varphi^{-1}(g)) \quad (f,g \in \ell^1(S\setminus \lbrace \lozenge \rbrace, \mu)).
\end{align}
It is elementary to see that $\#$ is an algebra product on $\ell^1(S\setminus \lbrace \lozenge \rbrace, \mu))$. Furthermore, $(\ell^1(S\setminus \lbrace \lozenge \rbrace, \mu), \#)$ is a Banach algebra since $\Vert f \# g \Vert_{\mu} \leq \Vert f \Vert_{\mu} \Vert g \Vert_{\mu}$ holds for all $f,g \in \ell^1(S\setminus \lbrace \lozenge \rbrace, \mu)$ as the map $\varphi$ is an isometry.

For our purposes the most important property of $\ell^1(S\setminus \lbrace \lozenge \rbrace, \mu)$ is that for every $s,t \in S\setminus \lbrace \lozenge \rbrace$,
\begin{align}\label{likeconvbutitsnot}
\delta_s \# \delta_t = \left\{
\begin{array}{l l}
\delta_{st} & \quad \text{if  } st \neq \lozenge \\
0 & \quad \text{if } st = \lozenge. \\
\end{array} \right.
\end{align}
The above equality holds for the following reason. Observe that for $r \in S\setminus \lbrace \lozenge \rbrace$, we simply have $\delta_r = \delta_r \vert_{S\setminus \lbrace \lozenge \rbrace} = \psi(\delta_r)$. Consequently whenever $s,t \in S\setminus \lbrace \lozenge \rbrace$ then
\begin{align}
\varphi^{-1}(\delta_s \# \delta_t) &= \varphi^{-1}(\delta_s) \cdot \varphi^{-1}(\delta_t) = \varphi^{-1}(\psi(\delta_s)) \cdot \varphi^{-1}(\psi(\delta_t)) = \pi(\delta_s) \cdot \pi(\delta_t) \notag \\
&= \pi(\delta_s \ast \delta_t) = \pi(\delta_{st}) = \varphi^{-1}(\psi(\delta_{st})).
\end{align}
On the one hand if $st= \lozenge$ then $\psi(\delta_{st}) = \psi(\delta_{\lozenge}) = 0$. On the other hand if $st \neq \lozenge$ then $\psi(\delta_{st}) = \delta_{st}$, thus proving the claim.

In particular it follows from equation~\eqref{likeconvbutitsnot} that $(\ell^1(S\setminus \lbrace \lozenge \rbrace, \mu), \#)$ is a unital Banach algebra with $\delta_e$ being the unit, and such that $\Vert \delta_e \Vert_{\mu} = \mu(e) =1$.  The proof of the following is entirely analogous to that of Proposition~\ref{altpredfcounterex}.

\begin{Prop}\label{prepicounterex}
Let $S$ be a monoid with multiplicative identity $e \in S$ and a zero element $\lozenge \in S$. Let $\mu: \, S \setminus \lbrace \lozenge \rbrace \rightarrow [1, + \infty)$ be a quasi-weight on $S \setminus \lbrace \lozenge \rbrace$. Let $p \in (\ell^1(S \setminus \lbrace \lozenge \rbrace, \mu), \#)$ be a non-zero idempotent such that $p \neq \delta_e$. Then
\begin{align}
		\Vert p \Vert_{\mu} \geq \frac{1}{2} \inf \left\lbrace \mu(s): \, s \in S, \, s \neq e, \, s \neq \lozenge \right\rbrace.
\end{align}
\end{Prop}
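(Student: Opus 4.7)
The plan is to imitate the proof of Proposition~\ref{altpredfcounterex} line by line, the only new ingredient being that we work in an auxiliary unweighted algebra built from the quotient $\ell^1(S,\omega)/\mathbb{C}\delta_{\lozenge}$ rather than in $\ell^1(S)$ directly. Concretely, let $\mathcal{B} := (\ell^1(S\setminus\{\lozenge\},1),\#)$; by the construction preceding the proposition this is a unital Banach algebra with unit $\delta_e$, isometrically isomorphic (via $\varphi$) to $\ell^1(S)/\mathbb{C}\delta_{\lozenge}$ where $\ell^1(S)$ is equipped with the trivial weight. Since $\mu\geq 1$, the identity map is a contractive inclusion $(\ell^1(S\setminus\{\lozenge\},\mu),\#)\hookrightarrow \mathcal{B}$, and it is an algebra homomorphism because both products agree on basis elements by formula~\eqref{likeconvbutitsnot} and hence, by bilinearity and continuity, everywhere. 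Thus $p$ may legitimately be regarded as a non-zero idempotent in $\mathcal{B}$ with $p\neq \delta_e$.

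The heart of the argument is the following estimate on the coefficient $p(e)$. Assume first that $p(e)\neq 0$ and set $c := (p(e))^{-1}p \in \mathcal{B}$. I claim $\|\delta_e - c\|_1 \geq 1$: otherwise the Carl Neumann series would make $c$, and hence $p$ itself, invertible in $\mathcal{B}$; but an invertible idempotent equals the unit, forcing $p=\delta_e$ and contradicting the hypothesis. Multiplying the inequality $\|\delta_e - c\|_1\geq 1$ through by $|p(e)|$ and recalling that $p$ is supported on $S\setminus\{\lozenge\}$ gives
\[
|p(e)| \;\leq\; \|p(e)\delta_e - p\|_1 \;=\; \sum_{s\in S\setminus\{e,\lozenge\}} |p(s)|,
\]
and this inequality is trivially valid when $p(e)=0$ as well.

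To conclude, I would invoke that $p$ is a nonzero idempotent in $\mathcal{B}$, so $\|p\|_1\geq 1$. Splitting
\[
1 \;\leq\; \|p\|_1 \;=\; |p(e)| + \sum_{s\in S\setminus\{e,\lozenge\}}|p(s)| \;\leq\; 2\sum_{s\in S\setminus\{e,\lozenge\}}|p(s)|
\]
and then estimating
\[
\|p\|_\mu \;\geq\; \sum_{s\in S\setminus\{e,\lozenge\}} |p(s)|\mu(s) \;\geq\; \Big(\inf_{s\in S\setminus\{e,\lozenge\}} \mu(s)\Big) \sum_{s\in S\setminus\{e,\lozenge\}}|p(s)| \;\geq\; \tfrac{1}{2}\inf_{s\in S\setminus\{e,\lozenge\}} \mu(s)
\]
yields the desired bound. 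No step looks genuinely difficult; the only real content beyond the proof of Proposition~\ref{altpredfcounterex} is identifying the correct ``unweighted'' ambient Banach algebra $\mathcal{B}$ in which to carry out the invertibility argument, since $(\ell^1(S\setminus\{\lozenge\},\mu),\#)$ is no longer a subalgebra of an ordinary convolution algebra on $S$.
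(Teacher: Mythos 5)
Your proof is correct and is exactly the argument the paper intends: the paper gives no separate proof of Proposition~\ref{prepicounterex}, stating only that it is ``entirely analogous'' to Proposition~\ref{altpredfcounterex}, and your write-up supplies the one genuinely new detail (passing to the unweighted algebra $(\ell^1(S\setminus\{\lozenge\},1),\#)$ as the ambient algebra for the Neumann-series invertibility argument, in place of $\ell^1(S)$). All steps check out.
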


In the following $\text{Cu}_2$ denotes the second Cuntz semigroup
\begin{align}
\langle a_1, a_2, b_1, b_2 : \; a_1 b_1 = e = a_2 b_2, \; a_1 b_2 = \lozenge = a_2 b_1 \rangle,
\end{align}
as defined in, for example, \cite[Page~141, Definition~2.2]{renault}. Here $e \in \text{Cu}_2$ and $\lozenge \in \text{Cu}_2$ denote the multiplicative identity and the zero element, respectively, rendering $\text{Cu}_2$ a monoid with a zero element.

Fix $n \in \mathbb{N}$. Let $\mu_n : \, \text{Cu}_2 \setminus \lbrace \lozenge \rbrace \rightarrow [1, + \infty)$ be a quasi-weight on $\text{Cu}_2 \setminus \lbrace \lozenge \rbrace$ defined as $\mu_n(e) = 1$ and $\mu_n(s) = n$ whenever $s \in \text{Cu}_2 \setminus \lbrace e, \lozenge \rbrace$.  Notice that this arises from the weight $\omega_n:\text{Cu}_2 \rightarrow [1,+\infty)$ defined by $\omega_n(e)=\omega_n(\lozenge) = 1$ and $\omega_n(s)=n$ otherwise.

\begin{Thm}\label{thm:pi_seq_not_pi}
Let $\mathcal{A}_n := (\ell^1(\text{Cu}_2 \setminus \lbrace \lozenge \rbrace, \mu_n), \#)$ for every $n \in \mathbb{N}$.
Then $(\mathcal{A}_n)$ is a sequence of properly infinite Banach algebras such that $\asy(\mathcal{A}_n)$ is not properly infinite.
\end{Thm}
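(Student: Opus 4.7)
The plan is to exhibit, for each $n$, explicit orthogonal idempotents in $\mathcal{A}_n$ realising proper infiniteness, and then to use Proposition~\ref{prepicounterex} to show that the norms of \emph{any} witnessing pair of orthogonal idempotents grow with $n$, so that $C_{\text{PI}}(\mathcal{A}_n) \rightarrow \infty$, and then invoke Proposition~\ref{piconstant} in its contrapositive form.

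More concretely, first I would verify that each $\mathcal{A}_n$ is properly infinite.  Set $p_n := \delta_{b_1} \# \delta_{a_1}$ and $q_n := \delta_{b_2} \# \delta_{a_2}$.  Using \eqref{likeconvbutitsnot} together with the relations $a_i b_i = e$ and $a_1 b_2 = \lozenge = a_2 b_1$ in $\text{Cu}_2$, one computes directly that $p_n^2 = p_n$, $q_n^2 = q_n$, $\delta_{a_i} \# \delta_{b_i} = \delta_e = 1$ so that $p_n \sim 1 \sim q_n$, and $p_n \# q_n = \delta_{b_1} \# (\delta_{a_1} \# \delta_{b_2}) \# \delta_{a_2} = 0$, and similarly $q_n \# p_n = 0$.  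Thus $\mathcal{A}_n$ is properly infinite.

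Next I would show that $C'_{\text{PI}}(\mathcal{A}_n) \geq n^2 / 4$.  Let $p, q \in \mathcal{A}_n$ be arbitrary orthogonal idempotents with $p \sim 1 \sim q$.  Neither $p$ nor $q$ can be zero, for $p \sim 1$ together with $p = 0$ would force $1 = 0$; and neither can equal $\delta_e$, for $p = \delta_e$ together with $pq = 0$ would give $q = 0$, contradicting $q \sim 1$.  Hence $p$ and $q$ are non-zero idempotents in $\mathcal{A}_n$ distinct from $\delta_e$, so Proposition~\ref{prepicounterex} applies, giving $\|p\|_{\mu_n}, \|q\|_{\mu_n} \geq n/2$.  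Consequently $C'_{\text{PI}}(\mathcal{A}_n) \geq n^2/4$.

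Finally, since $C'_{\text{PI}} \leq C_{\text{PI}}$, also $C_{\text{PI}}(\mathcal{A}_n) \geq n^2/4 \rightarrow \infty$.  If $\asy(\mathcal{A}_n)$ were properly infinite, Proposition~\ref{piconstant} would supply $K \geq 1$ and $N \in \mathbb{N}$ with $C_{\text{PI}}(\mathcal{A}_n) \leq K$ for all $n \geq N$, contradicting the previous estimate.  Hence $\asy(\mathcal{A}_n)$ is not properly infinite.  The only non-routine step in all of this is the verification that any witnessing pair of orthogonal idempotents must avoid both $0$ and $\delta_e$, so that Proposition~\ref{prepicounterex} can be applied to both; the semigroup arithmetic and the final appeal to Proposition~\ref{piconstant} are then immediate.
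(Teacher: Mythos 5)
Your proposal is correct and follows essentially the same route as the paper: the same explicit idempotents $\delta_{b_1}\#\delta_{a_1}$ and $\delta_{b_2}\#\delta_{a_2}$ witness proper infiniteness, and the lower bound $C'_{\text{PI}}(\mathcal{A}_n)\geq n^2/4$ obtained from Proposition~\ref{prepicounterex} is combined with Proposition~\ref{piconstant} in exactly the way the paper does. The only difference is that you spell out why any witnessing pair avoids $0$ and $\delta_e$, which the paper dismisses as clear.
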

\begin{proof}
Fix $n \in \mathbb{N}$. Let $p: = \delta_{b_1} \# \delta_{a_1}$ and $q: = \delta_{b_2} \# \delta_{a_2}$. Then $p,q \in \mathcal{A}_n$ are idempotents with $p \sim \delta_e \sim q$ and $p \perp q$ plainly because of the defining properties of $\text{Cu}_2$ and equation \eqref{likeconvbutitsnot}. This in particular shows that $\mathcal{A}_n$ is properly infinite. 

Let $p,q \in \mathcal{A}_n$ be arbitrary idempotents satisfying $p \sim \delta_e \sim q$ and $p \perp q$. Clearly $p,q \notin \lbrace \delta_e, 0 \rbrace$, hence Proposition \ref{prepicounterex} yields $\Vert p \Vert_{\mu_n}, \Vert q \Vert_{\mu_n} \geq n/2$, and consequently $C'_{\text{PI}}(\mathcal{A}_n) \geq n^2 / 4$. In view of Proposition~\ref{piconstant} the Banach algebra $\asy(\mathcal{A}_n)$ cannot be properly infinite. 
\end{proof}

Any reduced word in $\text{Cu}_2$ is of the form
$s = t_b s_a$, where $s_a$ is a word in $a_1, a_2$ (which are free, so generating a copy of
$\mathbb S_2$ the free semigroup on two generators), and $t_b$ is a word in $b_1, b_2$.
Consider how to cancel a word of the form
$s_a t_b$.  This will be equal to $\lozenge$ unless $s_a t_b = \cdots a_1^{n_3} a_2^{n_2}
a_1^{n_1} b_1^{n_1} b_2^{n_2} b_1^{n_3} \cdots$ with perhaps one of $s_a$ or $t_b$ having extra,
unbalanced, terms on the left, or right, respectively.  We can express this more succinctly as
follows.  Define $*$ to be the unique involution on $\text{Cu}_2$ with $a_i^*=b_i$ for $i=1,2$ and
$e^*=e, \lozenge^*=\lozenge$.  Notice that $r_b^* r_b = e$ for any word $r_b$.
Then $s_at_b=\lozenge$ unless either $s_a = r_a t_b^*$ or $t_b = s_a^* r_b$,
for some words $r_a$ and $r_b$.

From this, we can see that the idempotents in $\text{Cu}_2$ are of the form $s_b s_b^*$ for an
arbitrary word $s_b\in\mathbb S_2$.  Let $I(\text{Cu}_2)$ be the set of idempotents, \emph{excluding}
$\lozenge$.  One can also
show that if $s\in I(\text{Cu}_2), t\not\in I(\text{Cu}_2)$ then $st, ts \not\in I(\text{Cu}_2)$.
How idempotents multiply is a little more complicated.  Let $s = s_b s_b^*, t = t_b t_b^*$, and
consider $st$.  This will equal $\lozenge$ unless either:
\begin{itemize}
\item $t_b = s_b r_b$, in which case $st = s_b s_b^* s_b r_b r_b^* s_b^*
= s_b r_b r_b^* s_b^* = t$; or
\item $s_b^* = r_a t_b^*$, that is, $s_b = t_b r_b$ for some $r_b$, in which case
$st = t_b r_b r_b^* t_b^* t_b t_b^* = t_b r_b r_b^* t_b^* = s$.
\end{itemize}
This motivates defining $s_b \preceq t_b$ if $t_b = s_b r_b$ for some word $r_b$, that is,
$s_b$ is a \emph{prefix} of $t_b$.
Then $st=t$ when $s_b \preceq t_b$, and $st=s$ when $t_b \preceq s_b$, and $st=\lozenge$
otherwise.

We can consider $(\ell^1(I(\text{Cu}_2)), \#)$.  To ease notation, let $\mathbb S_2(b)$ be the set of words in $b_1,b_2$, with $\emptyset$ the empty word (the identity), so that a member of
$I(\text{Cu}_2)$ has the form $xx^*$ for some $x\in \mathbb S_2(b)$.  Let $e_x = \delta_{xx^*}$,
so $(e_x)$ is a basis for $\ell^1(I(\text{Cu}_2))$ and the product is
\[ e_x \# e_y = \begin{cases} e_x &\text{if } y \preceq x, \\
e_y &\text{if } x\preceq y, \\ 0 &\text{otherwise}. \end{cases} \]

\begin{Lem}
Let $\mc A = (\ell^1(I(\text{Cu}_2)), \#)$ and let $f = \sum_x f(xx^*) e_x\in\mc A$.  Then $f$ is an
idempotent if and only if:
\begin{enumerate}
\item $f(xx^*)\in\{-1,0,1\}$ for all $x$, and only finitely many are non-zero;
\item $\sum_{y \prec x} f(yy^*) \in \{0,1\}$ for each $x$.
\end{enumerate}
\end{Lem}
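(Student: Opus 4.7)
The plan is to first compute $f \# f$ explicitly. Since $e_x \# e_y$ equals $e_x$ if $y \preceq x$ and $e_y$ if $x \preceq y$, and is zero otherwise, grouping the product $f \# f = \sum_{x,y} f(xx^*) f(yy^*) \, e_x \# e_y$ by the resulting basis element gives, for each $z\in\mathbb{S}_2(b)$, the identity
\[ (f \# f)(zz^*) = f(zz^*)^2 + 2 f(zz^*) \sum_{y \prec z} f(yy^*). \]
Thus $f \# f = f$ holds if and only if, for every $z$, either $f(zz^*)=0$ or $f(zz^*) = 1 - 2 \sum_{y \prec z} f(yy^*)$. All subsequent analysis reduces to this pointwise equation.

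For the forward direction, I would argue by induction on $|z|$. In the base case $z = \emptyset$, the sum $\sum_{y \prec \emptyset} f(yy^*)$ is empty, and the equation forces $f(ee^*) \in \{0,1\}$. For the inductive step, the strict prefixes of $z$ form a totally ordered chain $\emptyset = z_0 \prec z_1 \prec \cdots \prec z_n = z$, so $\sum_{y \prec z} f(yy^*) = \sum_{i=0}^{n-1} f(z_i z_i^*)$. Using the inductive hypothesis that (1) and (2) hold for each $z_k$ with $k<n$, and rerunning the telescoping argument of Lemma~\ref{lem:six} along this chain, one obtains that each partial sum lies in $\{0,1\}$. In particular $\sum_{y \prec z} f(yy^*) \in \{0,1\}$, which is (2) for $z$, and the idempotent equation then forces $f(zz^*) \in \{-1, 0, 1\}$, giving (1). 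The finiteness statement in (1) follows from $\|f\|_1 = \sum_x |f(xx^*)| < \infty$ together with the fact that every non-zero value has modulus $1$.

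For the converse, I would assume (1) and (2) and verify the pointwise equation case by case. If $f(zz^*) = 0$ the equation holds trivially. If $f(zz^*) = 1$, I need to show that $\sum_{y \prec z} f(yy^*) = 0$; by (2) the only alternative is that this sum equals $1$, in which case $\sum_{y \preceq z} f(yy^*) = 2$, and then for any immediate child $z' := z b_i$ we have $\sum_{y \prec z'} f(yy^*) = \sum_{y \preceq z} f(yy^*) = 2$, contradicting (2) at $z'$. Symmetrically, if $f(zz^*) = -1$ then (2) forces $\sum_{y \prec z} f(yy^*) = 1$, by the same descendant argument applied to rule out the value $0$.

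The main technical point is recognising that although condition (2) only mentions a strict-prefix sum at a single $x$, its strength in the converse direction comes from applying it at descendants of the support of $f$. This relies on the fact that every node in the binary tree $\mathbb{S}_2(b)$ has children, so a would-be ``bad'' value at $z$ can always be refuted at $zb_1$ or $zb_2$. Beyond this, the argument is a direct translation of the totally ordered case of Lemma~\ref{lem:six} to the prefix-ordered tree, and I anticipate no further subtleties.
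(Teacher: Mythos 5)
Your proposal is correct and follows essentially the same route as the paper: the same computation of the coefficient of $e_z$ in $f\# f$, the same induction on word length for the forward direction (reducing to the telescoping argument of Lemma~\ref{lem:six} along the chain of prefixes), and the same key trick for the converse of evaluating condition (2) at a child $zb_i$ to pin down the strict-prefix sum. The only cosmetic difference is that you organise the converse by the value of $f(zz^*)$ rather than by the value of the prefix sum, which is logically equivalent.
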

\begin{proof}
We have that $e_x \# e_y = e_z$ exactly when one of $x,y$ is equal to $z$, and the other is a
prefix of $z$. Throughout the rest of the proof we shall write $f(x)$ instead of $f(xx^*)$ for the sake of readibility. Thus
\[ f\# f = \sum_{x,y} f(x) f(y) (e_x \# e_y) 
= \sum_z \Big( \sum_{x\prec z} f(z)f(x) + \sum_{x\prec z} f(x) f(z)
+ f(z)^2 \Big) e_z. \]
Thus $f\# f = f$ if and only if
\begin{equation}
f(z) = f(z)^2 + 2f(z) \sum_{x\prec z} f(x) \qquad (z\in\mathbb S_2(b)).
\label{eq:2}
\end{equation}

If the two conditions hold, suppose $\sum_{x\prec z} f(x) = 0$.  Let $w = zb_1$ (say)
so $y \prec w$ exactly when $y=z$ or $y\prec z$.  Thus $\sum_{y\prec w} f(y)
= f(z) + \sum_{x\prec z} f(x) = f(z)$ so $f(z)\in\{0,1\}$, and hence (\ref{eq:2}) holds.
If $\sum_{x\prec z} f(x) = 1$ then similar reasoning shows that $f(z)+1 \in \{0,1\}$ so
$f(z)\not=1$, and hence (\ref{eq:2}) holds.

Conversely, we perform induction on the length of the word in $\mathbb S_2(b)$, again using
that if $z$ is a word of length $n+1$ then $z = yb_i$ say, for $y$ a word of length $n$, and
then $x\prec z$ if and only if $x=y$ or $x\prec y$.  The details are the same as in the proof
of Lemma~\ref{lem:six}.
\end{proof}

\begin{Prop}\label{prop:9}
Let $\mu$ be a quasi-weight on $\text{Cu}_2\setminus\{\lozenge\}$ such that $\mu\geq 1$ and
$\mu(s) \geq N$ for each $s\not\in I(\text{Cu}_2)$.  Then $C_{\text{PI}}(
\ell^1(\text{Cu}_2\setminus\{\lozenge\}, \mu), \#) \geq (N/86)^{1/3}$
\end{Prop}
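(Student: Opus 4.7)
The proof will run in close parallel to that of Proposition~\ref{prop:8}: the bicyclic monoid is replaced by $\text{Cu}_2 \setminus \{\lozenge\}$, and the commutative closed subalgebra $\ell^1(BC_I)$ is replaced by $\ell^1(I(\text{Cu}_2))$, whose commutativity follows from the product rule $e_x \# e_y = e_{\max(x,y)}$ (under the prefix order, with the product equal to $0$ when $x,y$ are incomparable). Only the pair $(a,b)$ from the definition of $C_{\text{PI}}(\mc A)$ is used directly; the second pair $(c,d)$ enters merely to guarantee that the resulting idempotent $p = ba$ differs from $\delta_e$. Concretely, pick $K > C_{\text{PI}}(\mc A)$ together with $a,b,c,d \in \mc A$ such that $ab = cd = \delta_e$, $ad = cb = 0$, and $\|a\|_\mu \|b\|_\mu \|c\|_\mu \|d\|_\mu \leq K$. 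Setting $p = ba$ and $q = dc$ produces orthogonal idempotents with $p, q \sim \delta_e$; since $q \sim \delta_e$ forces $q \neq 0$, the relation $pq = 0$ rules out $p = \delta_e$. Also $\|c\|_\mu \|d\|_\mu \geq \|cd\|_\mu = 1$, so $\|a\|_\mu \|b\|_\mu \leq K$ and in particular $\|p\|_\mu \leq K$.

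Now transcribe the argument of Proposition~\ref{prop:8}. Regard $\mc A$ as a subalgebra of the unweighted $\ell^1(\text{Cu}_2 \setminus \{\lozenge\})$, and let $k$ be the restriction of $p$ to $I(\text{Cu}_2)$, so $\|p - k\| \leq \|p\|_\mu/N$ by the hypothesis on $\mu$. The identity $k^2 - k = k(k-p) + (k-p)p + (p-k)$ (valid as $p^2 = p$) yields $\|k^2 - k\| \leq (2\|p\|_\mu + 1)\|p\|_\mu/N =: \nu$. Assuming $\nu < 1/4$, Proposition~\ref{approxidemp} applied inside $\ell^1(I(\text{Cu}_2))$ produces an idempotent $k'$ there with $\|k - k'\| \leq (\|p\|_\mu + 1/2)((1-4\nu)^{-1/2} - 1) =: \nu'$, so $\|p - k'\| \leq \|p\|_\mu/N + \nu' =: \epsilon$. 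Assuming $\epsilon < 1$, Remark~\ref{rem:c_di_defn} gives $\|\delta_e - p\| \geq 1$ and so $k' \neq \delta_e$, while $\|p\| \geq 1$ (since $p$ is a non-zero idempotent in the unweighted algebra) gives $k' \neq 0$. The structural lemma preceding Proposition~\ref{prop:9} then forces some word $x \neq \emptyset$ with $k'(xx^*) = \pm 1$; setting $t := xx^* \in I(\text{Cu}_2) \setminus \{e\}$, we obtain $|p(t) \mp 1| \leq \nu'$.

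To close, decompose $a = a_I + a'$ and $b = b_I + b'$ according to support in $I(\text{Cu}_2)$. Because the product of an $I$-supported element with an off-$I$-supported one lies off $I$, evaluation at $t$ gives $(a_I b_I)(t) + (a'b')(t) = (ab)(t) = 0$ and $(b_I a_I)(t) + (b'a')(t) = (ba)(t) = p(t)$. Commutativity of $\ell^1(I(\text{Cu}_2))$ gives $a_I b_I = b_I a_I$, and subtracting yields $(b'a')(t) - (a'b')(t) = p(t)$, hence $2\|a'\|\|b'\| \geq |p(t)| \geq 1 - \nu'$. Since $\mu \geq N$ off $I(\text{Cu}_2)$, $\|a\|_\mu \|b\|_\mu \geq N^2 \|a'\|\|b'\| \geq N^2(1-\nu')/2$, so $K \geq N^2(1-\nu')/2$. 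Running Proposition~\ref{prop:8}'s numerical bookkeeping with $\epsilon_0 = 1/2$ (so $\nu \leq 3K^2/N$, $\nu' \leq 42K^3/N$, $\epsilon \leq 43K^3/N$), if $C_{\text{PI}}(\mc A) < (N/86)^{1/3}$ one can choose $K$ with $43K^3 \leq N/2$, making $\epsilon \leq 1/2$ and yielding $N^2/4 \leq K \leq (N/86)^{1/3}$, a contradiction. No serious obstacle arises; the argument is essentially a transcription of Proposition~\ref{prop:8} using commutativity of $\ell^1(I(\text{Cu}_2))$ and the adjacent structural lemma in place of their $BC_I$ analogues.
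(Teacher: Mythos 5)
Your proposal is correct and follows essentially the same route as the paper: the paper reduces to bounding $C_{\text{DI}}$ from below via Lemma~\ref{lem:11} and then transcribes the argument of Proposition~\ref{prop:8}, whereas you inline that reduction (extracting the single pair $(a,b)$ from the quadruple and noting $ba\neq\delta_e$ and $\|a\|_\mu\|b\|_\mu\leq K$) before carrying out the same transcription, using commutativity of $\ell^1(I(\text{Cu}_2))$ and the adjacent structural lemma exactly as intended. The details you supply (the restriction $k$, the approximate idempotent $k'$, the decomposition $a=a_I+a'$, and the numerical bookkeeping with $\epsilon_0=1/2$) all check out.
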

\begin{proof}
Set $\mc A = (\ell^1(\text{Cu}_2\setminus\{\lozenge\}, \mu)$.
By Lemma~\ref{lem:11} it suffices to prove that $C_{DI}(\mc A) \geq (N/86)^{1/3}$.
To show this, we can follow almost exactly the strategy of the proof of Proposition~\ref{prop:8},
given the preliminary observations made above.
\end{proof}

\begin{Prop}\label{prop:7}
For each $n\geq 1$ there exists a quasi-weight $\mu_n$ on $\text{Cu}_2\setminus\{\lozenge\}$ so
that with $\mc A = (\ell^1(\text{Cu}_2 \setminus \{\lozenge\}, \mu_n), \#)$, we have
$C'_{\text{PI}}(\mc A)=1$ and yet $C_{\text{PI}}(\mc A_n) \geq n$.
\end{Prop}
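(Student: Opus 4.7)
The construction parallels Proposition~\ref{prop:2}, with the bicyclic monoid replaced by $\text{Cu}_2$ and the single ``cheap'' idempotent replaced by the two witnessing proper infiniteness. Given $n \geq 1$, I would choose $N \geq 86 n^3$ and define $\omega_n : \text{Cu}_2 \to [1, +\infty)$ by
\[
\omega_n(s) = \begin{cases} 1 & \text{if } s \in X := \{e, b_1 a_1, b_2 a_2, \lozenge\}, \\ N & \text{otherwise,} \end{cases}
\]
then let $\mu_n$ be the restriction of $\omega_n$ to $\text{Cu}_2 \setminus \{\lozenge\}$.

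The first step is to verify that $\omega_n$ is a weight. Since $\omega_n \geq 1$, submultiplicativity can only fail at a pair $s,t$ with $\omega_n(s) = \omega_n(t) = 1$ but $\omega_n(st) = N$, that is, some $s, t \in X$ with $st \notin X$. Using the defining relations $a_i b_i = e$ and $a_i b_j = \lozenge$ for $i \neq j$, one checks directly that $X$ is multiplicatively closed: $(b_1 a_1)^2 = b_1(a_1 b_1)a_1 = b_1 a_1$, $(b_1 a_1)(b_2 a_2) = b_1 \lozenge\, a_2 = \lozenge$, and similarly for the other cross products, so every product lands in $X$.

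For the upper bound on $C'_{\text{PI}}(\mc A_n)$, I take $p := \delta_{b_1 a_1}$ and $q := \delta_{b_2 a_2}$, exactly the idempotents from the proof of Theorem~\ref{thm:pi_seq_not_pi}. By formula~\eqref{likeconvbutitsnot} and the $\text{Cu}_2$ relations, $p$ and $q$ are orthogonal idempotents with $p \sim \delta_e \sim q$, and by construction $\|p\|_{\mu_n} = \mu_n(b_1 a_1) = 1$ and similarly $\|q\|_{\mu_n} = 1$. Hence $C'_{\text{PI}}(\mc A_n) \leq 1$. Since any nonzero idempotent has norm at least one (Remark~\ref{rem:c_di_defn}), we also have $C'_{\text{PI}}(\mc A_n) \geq 1$, so equality holds.

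For the lower bound on $C_{\text{PI}}(\mc A_n)$, observe that the only elements of $\text{Cu}_2 \setminus \{\lozenge\}$ assigned weight $1$, namely $e$, $b_1 a_1$, $b_2 a_2$, are all elements of $I(\text{Cu}_2)$. Consequently $\mu_n(s) = N$ whenever $s \notin I(\text{Cu}_2)$, which is precisely the hypothesis of Proposition~\ref{prop:9}. That proposition yields $C_{\text{PI}}(\mc A_n) \geq (N/86)^{1/3} \geq n$, completing the proof. The only place where any real work is required is the submultiplicativity check for $\omega_n$, and even this is immediate once the two idempotents $b_1 a_1, b_2 a_2$ are identified as forming, together with $e$ and $\lozenge$, a sub-monoid of $\text{Cu}_2$.
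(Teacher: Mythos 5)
Your proposal is correct and is essentially identical to the paper's own proof: the same sub-semigroup $X=\{e,\lozenge,b_1a_1,b_2a_2\}$ defines the weight, the same idempotents $\delta_{b_1a_1},\delta_{b_2a_2}$ witness $C'_{\text{PI}}=1$, and the lower bound comes from the same appeal to Proposition~\ref{prop:9}. The extra details you supply (the submultiplicativity check and the observation that the weight-one elements lie in $I(\text{Cu}_2)$) are exactly the verifications the paper leaves implicit.
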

\begin{proof}
We follow the strategy of the proof of Proposition~\ref{prop:2}.  Choose $N$ so that
$(N/86)^{1/3} \geq n$.  The set $X = \{ e, \lozenge, b_1a_1, b_2a_2 \}$ is a
sub-semigroup of $\text{Cu}_2$, and so the map $\omega:\text{Cu}_2\rightarrow [1,\infty)$ defined
by $\omega(s)=1$ for $s\in X$ and $\omega(s)=N\in\mathbb N$ otherwise, is a weight.  Let $\mu_n$
be the induced quasi-weight on $\text{Cu}_2\setminus\{\lozenge\}$.  With $p=\delta_{b_1a_1}$ and
$q=\delta_{b_2a_2}$, we see that $C'_{\text{PI}}(\mc A)=1$.  However, our quasi-weight satisfies
the hypotheses of Proposition~\ref{prop:9}, and so $C_{\text{PI}}(\mc A) \geq (N/86)^{1/3}
\geq n$.
\end{proof}

We can prove some similar renorming results.  The following is shown in exactly the same way
as Proposition~\ref{prop:10}, as if we have orthogonal idemopotents $p,q$ then $\{0,1,p,q\}$ is
a (bounded) semigroup in $\mc A$.

\begin{Prop}\label{prop:13}
Let $\mc A$ be a properly infinite Banach algebra.  There is an equivalent norm $\|\cdot\|_0$ on $\mc A$
such that $(\mc A, \|\cdot\|_0)$ is a unital Banach algebra, and
$C'_{\text{PI}}(\mc A, \|\cdot\|_0)=1$.
\end{Prop}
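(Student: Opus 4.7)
The plan is to mimic the proof of Proposition~\ref{prop:10} almost verbatim; the key observation (already flagged in the excerpt) is that when $\mc A$ is properly infinite, witnessed by orthogonal idempotents $p,q$ with $p\sim 1$ and $q\sim 1$, the finite set $S:=\{0,1,p,q\}\subseteq\mc A$ is a (bounded) multiplicative sub-semigroup. Indeed, $0$ is absorbing, $1$ is the unit, $p^2=p$, $q^2=q$, and $pq=qp=0\in S$, so closure under multiplication is immediate.

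First I would invoke \cite[Proposition~2.1.9]{dal} applied to the bounded semigroup $S$ to produce an equivalent submultiplicative norm $\|\cdot\|_0$ on $\mc A$ such that $(\mc A,\|\cdot\|_0)$ is a Banach algebra and $\|s\|_0\leq 1$ for every $s\in S$. In particular $\|1\|_0\leq 1$, $\|p\|_0\leq 1$ and $\|q\|_0\leq 1$. Since $1,p,q$ are non-zero idempotents, for any submultiplicative norm $\eta$ one has $\eta(x)=\eta(x^n)\leq\eta(x)^n$ for all $n$, forcing $\eta(x)\geq 1$. Consequently $\|1\|_0=\|p\|_0=\|q\|_0=1$.

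Next I would observe that the relation $p\sim 1$ and the orthogonality relations are purely algebraic, so they are preserved under renorming: $p,q$ remain orthogonal idempotents in $(\mc A,\|\cdot\|_0)$ each equivalent to the unit, and hence they still witness proper infiniteness in the new norm. Therefore $\|p\|_0\|q\|_0=1$ gives $C'_{\text{PI}}(\mc A,\|\cdot\|_0)\leq 1$. Combined with the general lower bound $C'_{\text{PI}}(\mc B)\geq 1$ for any properly infinite unital Banach algebra $\mc B$ (since idempotents have norm at least $1$ in a submultiplicative norm), this gives the desired equality $C'_{\text{PI}}(\mc A,\|\cdot\|_0)=1$.

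There is essentially no obstacle here beyond identifying the correct finite semigroup to feed into \cite[Proposition~2.1.9]{dal}; the rest is a direct copy of the argument of Proposition~\ref{prop:10}. The only point worth being careful about is the standing assumption that units have norm $1$: this is automatic from the construction since $1\in S$, so $(\mc A,\|\cdot\|_0)$ really is a unital Banach algebra in the paper's sense.
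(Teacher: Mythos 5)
Your proposal is correct and follows exactly the route the paper takes: the paper's proof of Proposition~\ref{prop:13} is precisely to observe that $\{0,1,p,q\}$ is a bounded sub-semigroup and then repeat the argument of Proposition~\ref{prop:10} via \cite[Proposition~2.1.9]{dal}. The only detail worth retaining explicitly is that $p\sim 1$ forces $p\neq 0$ (since $bpa=1$ when $p=ab$, $1=ba$), which justifies the lower bound $\|p\|_0\geq 1$ you use.
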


\begin{Prop}\label{prop:14}
For each $K>0$ there is a Banach algebra $\mc A$ and $a,b,c,d\in\mc A$ with $ab=cd=1,
cb = ad = 0$, such that, if $\|\cdot\|_0$ is any
equivalent norm on $\mc A$, then $\|a\|_0 \|b\|_0 \|c\|_0 \|d\|_0\geq K$.
\end{Prop}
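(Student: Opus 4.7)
The plan is to mimic the proof of Proposition~\ref{prop:11}, replacing the bicyclic monoid with the second Cuntz semigroup $\text{Cu}_2$ and a suitable exponential quasi-weight. For $s\in\text{Cu}_2$, let $\ell(s)$ denote the length of the (unique) reduced word representing $s$, extended by $\ell(e)=0$. Because reduction only decreases length, $\ell(st)\leq \ell(s)+\ell(t)$ whenever $st\neq\lozenge$. For $x>0$, define $\omega_x$ on $\text{Cu}_2$ by $\omega_x(\lozenge)=1$, $\omega_x(e)=1$, and $\omega_x(s)=\exp(x\ell(s))$ otherwise; one checks case by case that this is a weight on $\text{Cu}_2$. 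Let $\mu_x$ be the induced quasi-weight on $\text{Cu}_2\setminus\{\lozenge\}$, and set $\mc A_x=(\ell^1(\text{Cu}_2\setminus\{\lozenge\},\mu_x),\#)$.

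Next I would take the natural candidates $a=\delta_{a_1}$, $b=\delta_{b_1}$, $c=\delta_{a_2}$, $d=\delta_{b_2}$. Using the defining relations of $\text{Cu}_2$ together with formula~\eqref{likeconvbutitsnot}, we obtain $a\#b=\delta_e=1=\delta_e=c\#d$ and $a\#d=0=c\#b$ (since $a_1b_2=\lozenge=a_2b_1$), so these witness the algebraic hypotheses of the proposition.

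Now let $\|\cdot\|_0$ be any equivalent norm on $\mc A_x$, and pick $m>0$ with $m^{-1}\|f\|_{\mu_x}\leq \|f\|_0\leq m\|f\|_{\mu_x}$ for all $f\in\mc A_x$. The key point is that for each generator $g\in\{a_1,b_1,a_2,b_2\}$, the powers $g^n$ are reduced words of length $n$, so $g^n\neq\lozenge$ and $\delta_g^{\#n}=\delta_{g^n}$. Hence $\mu_x(g^n)=e^{xn}$, and by submultiplicativity of $\|\cdot\|_0$,
\begin{equation*}
m^{-1}e^{xn}=m^{-1}\|\delta_{g^n}\|_{\mu_x}\leq\|\delta_{g^n}\|_0=\|\delta_g^{\#n}\|_0\leq\|\delta_g\|_0^n.
\end{equation*}
Taking $n$-th roots and letting $n\to\infty$ as in the proof of Proposition~\ref{prop:11} yields $\|\delta_g\|_0\geq e^x$ for each of the four generators. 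Therefore $\|a\|_0\|b\|_0\|c\|_0\|d\|_0\geq e^{4x}$. Given $K>0$, one simply chooses $x>0$ with $e^{4x}\geq K$ and takes $\mc A=\mc A_x$.

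There is no real obstacle here once the right model is identified; the only step requiring a moment's care is verifying that $\omega_x$ is a bona fide weight on $\text{Cu}_2$ in the presence of the zero element, which boils down to the four easy cases (one or both of $s,t$ equal to $\lozenge$; $s,t\neq\lozenge$ with $st=\lozenge$, using $\omega_x(\lozenge)=1$; and $s,t,st$ all non-$\lozenge$, which uses subadditivity of $\ell$).
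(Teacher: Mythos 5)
Your proposal is correct and follows essentially the same route as the paper: the exponential weight $\omega_x(s)=\exp(x\ell(s))$ on $\text{Cu}_2$, the generators $\delta_{a_1},\delta_{b_1},\delta_{a_2},\delta_{b_2}$ as witnesses, and the power-growth argument from Proposition~\ref{prop:11} to force $\|\delta_g\|_0\geq e^x$ under any equivalent norm. The only difference is that you spell out the (simpler) computation $\ell(g^n)=n$ for the free generators, where the paper merely cites the argument of Proposition~\ref{prop:11}.
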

\begin{proof}
We follow the strategy of the proof of Proposition~\ref{prop:11}.
We have the word-length $\ell$ on $\text{Cu}_2$, where $\ell(\lozenge) = 0$, and again this
is sub-additive.  Thus, for $x>0$, the function $\omega_x(s) = \exp(x\ell(s))$ is a weight.
Let $\mu$ be the quasi-weight given by $\omega$, and set
$\mc A = ((\ell^1(\text{Cu}_2 \setminus \{\lozenge\}, \mu_n), \#)$.  Let $\|\cdot\|_0$ be an
equivalent norm on $\mc A$, say with $m^{-1} \|f\|_0 \leq \|f\|_{\omega_x}$ for each $f\in\mc A$.

Set $a = \delta_{a_1}, b=\delta_{b_1}$ and $c = \delta_{a_2}, d=\delta_{b_2}$.
The same argument as used in  the proof of Proposition~\ref{prop:11} now shows that
$\|a\|_0, \|b\|_0, \|c\|_0, \|d\|_0 \geq e^x$, which completes the proof.
\end{proof}

Again, we leave open the question of whether it is possible to find an absolute constant $K \geq 1$ such that for every properly infinite Banach algebra $\mc A$ there is an equivalent norm $\| \cdot \|_0$ on $\mc A$ with $C_{\text{PI}}(\mc A, \|\cdot\|_0) \leq K$.

\subsection{When we have norm control}

As in the Dedekind-finite case, the converse to Theorem~\ref{positivepi} holds provided we have uniform norm control.  Notice that this, when combined with Proposition~\ref{piconstant}, gives a complete characterisation of when $\asy(\mathcal{A}_n)$ is properly infinite.

\begin{Prop}\label{piconstant_conv}
Let $(\mathcal{A}_n)$ be a sequence of unital Banach algebras such that \\
$\limsup_{n \rightarrow \infty} C_{\text{PI}}(\mathcal{A}_n) < \infty$.
Then $\asy(\mathcal{A}_n)$ is properly infinite.
\end{Prop}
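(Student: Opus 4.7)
The plan is to directly construct the witnesses of proper infiniteness in $\asy(\mc A_n)$ by lifting scale-normalised witnesses coming from the individual $\mc A_n$.  By hypothesis, there exist $K \geq 1$ and $N \in \mathbb{N}$ such that $C_{\text{PI}}(\mc A_n) \leq K$ for every $n \geq N$.  Hence, for each such $n$, I can choose $a_n, b_n, c_n, d_n \in \mc A_n$ with $a_n b_n = 1_n = c_n d_n$, $a_n d_n = 0 = c_n b_n$, and $\|a_n\| \|b_n\| \|c_n\| \|d_n\| \leq K+1$, say.

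The key technical point is norm control of the sequences individually, not just of their products. Since $a_n b_n = 1_n$ forces $\|a_n\|\|b_n\| \geq 1$ (and similarly for $c_n,d_n$), we have in particular $\|a_n\| \|b_n\| \leq K+1$ and $\|c_n\| \|d_n\| \leq K+1$.  The identities $a_n b_n = 1_n$, $b_n a_n$, and $a_n d_n = 0$ are all invariant under the rescaling $a_n \mapsto \lambda_n a_n$, $b_n \mapsto \lambda_n^{-1} b_n$ for any $\lambda_n > 0$, and analogously for $c_n,d_n$.  I shall therefore rescale so that $\|a_n\| = \|b_n\| \leq \sqrt{K+1}$ and $\|c_n\| = \|d_n\| \leq \sqrt{K+1}$.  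Setting $a_n = b_n = c_n = d_n = 0$ for $n < N$, I obtain $A = (a_n), B = (b_n), C = (c_n), D = (d_n) \in \ell^\infty(\mc A_n)$.

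Let $a = \pi(A)$, $b = \pi(B)$, $c = \pi(C)$, $d = \pi(D) \in \asy(\mc A_n)$.  The sequences $AB - 1, CD - 1, AD, CB$ are each eventually zero (vanishing for all $n \geq N$) and hence lie in $c_0(\mc A_n)$, so the relations $ab = 1 = cd$ and $ad = 0 = cb$ hold in $\asy(\mc A_n)$.  Define
\[ p := ba, \qquad q := dc. \]
Then $p^2 = b(ab)a = ba = p$ and similarly $q^2 = q$, so $p$ and $q$ are idempotents.  From $ab = 1, ba = p$ we have $p \sim 1$, and similarly $q \sim 1$.  Finally, $pq = b(ad)c = 0$ and $qp = d(cb)a = 0$, so $p$ and $q$ are orthogonal, witnessing that $\asy(\mc A_n)$ is properly infinite.

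There is no serious obstacle: the only subtlety is the scale-rebalancing step, which is needed because $C_{\text{PI}}$ only controls the product of norms.  Everything else is routine bookkeeping in the quotient.
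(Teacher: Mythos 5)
Your argument is correct and is essentially the paper's own proof: both select witnesses with product of norms uniformly bounded for $n\geq N$, rebalance the scales so that $\|a_n\|=\|b_n\|$ and $\|c_n\|=\|d_n\|$ (using $\|c_n\|\|d_n\|\geq 1$ to bound each factor), pad with zeros for $n<N$, and pass to $\asy(\mc A_n)$ where $p=\pi(B)\pi(A)$ and $q=\pi(D)\pi(C)$ are the required orthogonal idempotents equivalent to $1$. No gaps.
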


We remark that this hypothesis is weaker than $\sup_n C_{\text{PI}}(\mathcal{A}_n) < \infty$, as the hypothesis of the proposition allows finitely many of the $\mathcal{A}_n$ to not be properly infinite.

\begin{proof}
By hypothesis, there are $K > 0$ and $N \in \mathbb{N}$ such that for $n\geq N$ we can find $a_n, b_n, c_n, d_n\in \mathcal{A}_n$ with $\|a_n\| \|b_n\| \|c_n\| \|d_n\| \leq K$ and so that, with $p_n = b_na_n, q_n = d_nc_n$, we have that $p_n, q_n$ are mutually orthogonal idempotents with $p_n \sim 1 \sim q_n$.  Notice that by rescaling, we may suppose that $\|a_n\| = \|b_n\|$ and $\|c_n\| = \|d_n\|$.  As $a_nb_n=1$, it follows that $\|a_n\|\geq 1$;
similarly $\|c_n\|\geq 1$.  Then $\|a_n\|^2 \|c_n\|^2 \leq K$ and so $\|a_n\|^2\leq K$ and
$\|c_n\|^2\leq K$.

For $n< N$ define $a_n=b_n=c_n=d_n=0$.  Then $A = (a_n) \in \ell^{\infty}(\mathcal{A}_n)$ with $\|A\|^2 \leq K$, and similarly for $B=(b_n), C=(c_n)$ and $D=(d_n)$.  We now see that $\pi(A) \pi(B) = 1$ in $\asy(\mathcal{A}_n)$, and similarly $\pi(C)\pi(D)=1$.  Furthermore, $p=\pi(B)\pi(A)$ and $q=\pi(D)\pi(C)$ are idempotents with $pq = qp = 0$.  Thus $p\sim 1 \sim q$
and $p,q$ are orthogonal, and so $\asy(\mathcal{A}_n)$ is properly infinite, as claimed.
\end{proof}

\begin{Cor}
Let $(\mathcal{A}_n)$ be a sequence of unital Banach algebras such that there is an $N \in \mathbb{N}$ such that $\mathcal{A}_n$ is properly infinite for all $n \geq N$. Moreover, suppose that one of the following two conditions hold:
\begin{enumerate}
\item $\mathcal{A}_n = \mathcal{A}_m$ for every $n,m \geq N$;
\item $\mathcal{A}_n$ is a $C^*$-algebra for each $n \in \mathbb{N}$.
\end{enumerate}
Then $\asy(\mathcal{A}_n)$ is properly infinite.
\end{Cor}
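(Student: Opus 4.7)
The plan is to invoke Proposition~\ref{piconstant_conv}, which only requires that $\limsup_{n\to\infty} C_{\text{PI}}(\mc A_n) < \infty$. So in each case I need to bound the $C_{\text{PI}}$ constants uniformly for $n$ sufficiently large.

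In case (1) this is essentially automatic: for $n\geq N$ the algebras $\mc A_n$ all coincide with a fixed properly infinite unital Banach algebra $\mc A$, and so the constant $C_{\text{PI}}(\mc A_n) = C_{\text{PI}}(\mc A)$ is finite and independent of $n$. Hence the $\limsup$ is finite, and Proposition~\ref{piconstant_conv} applies.

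In case (2), the point is that for $C^*$-algebras the constant $C_{\text{PI}}$ collapses to $1$ whenever it is finite. Specifically, if $\mc A_n$ is a properly infinite $C^*$-algebra, then by Proposition~\ref{prop:pr3} it is properly infinite in the $C^*$-algebraic sense, so there exist isometries $s_1, s_2 \in \mc A_n$ with $s_1^*s_1 = s_2^*s_2 = 1$ and mutually orthogonal range projections, i.e.\ $s_1^*s_2 = s_2^*s_1 = 0$. Taking $a=s_1^*, b=s_1, c=s_2^*, d=s_2$ witnesses $C_{\text{PI}}(\mc A_n) \leq 1$, and since $ab=1$ forces $\|a\|\|b\|\geq 1$, in fact $C_{\text{PI}}(\mc A_n) = 1$ for all $n\geq N$. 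Therefore $\limsup_{n\to\infty} C_{\text{PI}}(\mc A_n) \leq 1 < \infty$, and once again Proposition~\ref{piconstant_conv} yields that $\asy(\mc A_n)$ is properly infinite.

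The only non-trivial point is the norm-one witness for proper infiniteness in a $C^*$-algebra, which is standard and already encoded in Proposition~\ref{prop:pr3}; the rest is a direct application of Proposition~\ref{piconstant_conv}. Thus the corollary reduces to bookkeeping, with no genuine obstacle beyond checking that the two hypotheses each force uniform control of $C_{\text{PI}}$.
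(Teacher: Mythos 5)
Your proposal is correct and follows essentially the same route as the paper: case (1) is handled by the constancy of $C_{\text{PI}}(\mc A_n)$ for $n\geq N$, and case (2) by observing that a properly infinite $C^*$-algebra has $C_{\text{PI}}=1$ (the paper phrases this via orthogonal projections Murray--von Neumann equivalent to $1$, which is the same as your pair of isometries with orthogonal ranges), after which Proposition~\ref{piconstant_conv} applies in both cases.
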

\begin{proof}
When $\mathcal{A}_n = \mathcal{A}_m$ for $n,m\geq N$, this follows immediately from the preceding result.  Now suppose that each $\mathcal{A}_n$ is a $C^*$-algebra.  From Proposition~\ref{prop:pr3}, a $C^*$-algebra $\mathcal{B}$ is properly infinite if and only if there are projections $p,q\in \mathcal{B}$ with $pq=0$ (so also $qp=0$) and with $p\approx 1 \approx q$.  In particular, $C_{\text{PI}}(\mathcal{B}) = 1$; and so the result follows again from the previous result.
\end{proof}

\subsection{For ultraproducts}
All of these results hold for ultraproducts with suitable modifications.  For example,
the analogue of combining Propositions~\ref{piconstant} and~\ref{piconstant_conv}
is the following.

\begin{Thm}
Let $(\mc A_n)$ be a sequence of Banach algebras, and let $\mc U$ be an ultrafilter.
Then $(\mc A_n)_{\mc U}$ is properly infinite if and only if there is $K>0$ such that
$\{ n \in \mathbb{N} : C_{\text{PI}}(\mc A_n)\leq K \} \in \mc U$.
\end{Thm}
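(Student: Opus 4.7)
The plan is to adapt the asymptotic-sequence-algebra arguments of Proposition~\ref{piconstant} and Proposition~\ref{piconstant_conv} to the ultrafilter setting, replacing ordinary $\limsup$ and ``eventually'' with ultrafilter limits and ``ultrafilter-many''. The two directions are essentially independent, so I would treat them in turn.

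For the converse (``if''), the argument is immediate. Assume $U:=\{n : C_{\textrm{PI}}(\mc A_n)\leq K\}\in\mc U$. For each $n\in U$ choose $a_n,b_n,c_n,d_n\in\mc A_n$ with $a_nb_n=c_nd_n=1_n$, with $p_n:=b_na_n, q_n:=d_nc_n$ orthogonal, and with $\|a_n\|\|b_n\|\|c_n\|\|d_n\|\leq K+1$. After rescaling we can assume $\|a_n\|=\|b_n\|$ and $\|c_n\|=\|d_n\|$, which (using $a_nb_n=1_n$ etc.) forces $\|a_n\|, \|b_n\|, \|c_n\|, \|d_n\|\leq(K+1)^{1/2}$. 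For $n\notin U$ set all four entries to $0$. The resulting sequences lie in $\ell^\infty(\mc A_n)$, and since ultrafilter-limits only see $U$, their images $a,b,c,d\in(\mc A_n)_{\mc U}$ satisfy $ab=cd=1$ and the orthogonality relations, so $ba$ and $dc$ are orthogonal idempotents each equivalent to $1$ in $(\mc A_n)_{\mc U}$.

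For the forward direction, I would mimic the proof of Proposition~\ref{piconstant}. Suppose $(\mc A_n)_{\mc U}$ is properly infinite, and fix orthogonal idempotents $p,q\in(\mc A_n)_{\mc U}$ with $p\sim 1\sim q$, witnessed by $a,b,c,d\in(\mc A_n)_{\mc U}$ with $ab=cd=1$, $ba=p$, $dc=q$. Lift $p,q$ to $P=(p_n), Q=(q_n)\in\ell^\infty(\mc A_n)$ by the ultrafilter analogue of Proposition~\ref{liftidempfromcorona}; the construction there used only norm estimates and Proposition~\ref{approxidemp}, so replacing ``$n\geq M$'' by ``$n\in V$ for some $V\in\mc U$'' works verbatim. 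Similarly lift $a,b,c,d$ to $A,B,C,D\in\ell^\infty(\mc A_n)$. Choose $\delta\in(0,1)$ small enough (depending on $\|A\|,\|B\|,\|C\|,\|D\|$) so that the quantities in the proof of Proposition~\ref{piconstant} are all $<1/2$, and let $V\in\mc U$ be a set on which $\|1_n-a_nb_n\|<\delta$, $\|1_n-c_nd_n\|<\delta$, $\|p_n-b_na_n\|<\delta$, $\|q_n-d_nc_n\|<\delta$, $\|p_nq_n\|<\delta$, $\|q_np_n\|<\delta$ all hold (a finite intersection of ultrafilter sets). On $V$ the Carl Neumann series makes $u_n=a_nb_n$, $v_n=c_nd_n$ invertible with controlled inverses, and the computation of Proposition~\ref{piconstant} combined with Remark~\ref{rem:zemanek} produces $\tilde a_n,\tilde b_n,\tilde c_n,\tilde d_n\in\mc A_n$ with $\tilde a_n\tilde b_n=\tilde c_n\tilde d_n=1_n$, with $\tilde b_n\tilde a_n=p_n$ and $\tilde d_n\tilde c_n=q_n$ (hence orthogonal by commutativity of $p_n,q_n$ and $p_nq_n=0=q_np_n$, which also holds on $V$ because commuting idempotents of norm $<1/2$ are zero), and with $\|\tilde a_n\|\|\tilde b_n\|\|\tilde c_n\|\|\tilde d_n\|$ bounded by a constant $K$ depending only on $\|A\|,\|B\|,\|C\|,\|D\|,\|P\|,\|Q\|$ and $\delta$. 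Thus $V\subseteq\{n:C_{\textrm{PI}}(\mc A_n)\leq K\}\in\mc U$.

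The main technical obstacle is purely bookkeeping: ensuring that every ``there exists $N$ such that for all $n\geq N$'' in the proofs of Proposition~\ref{liftidempfromcorona} and Proposition~\ref{piconstant} can be replaced by an element of $\mc U$, and that the finitely many auxiliary sets so produced have a common intersection in $\mc U$. No new analytic input is required, since all norm estimates are preserved under the passage from $\limsup$ to $\lim_{n\to\mc U}$; one could alternatively give a much shorter {\L}o\'s-style proof by encoding ``$C_{\textrm{PI}}(\mc A)\leq K$'' as a sentence in continuous logic analogous to $\varphi_n$ of Lemma~\ref{lem:sentence_for_di}, but the direct approach above is completely parallel to the asymptotic case already handled.
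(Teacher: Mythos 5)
Your proposal is correct and takes essentially the same route the paper intends: the paper states this theorem without a detailed argument, remarking only that it is ``the analogue of combining Propositions~\ref{piconstant} and~\ref{piconstant_conv}'' obtained by the usual suitable modifications, and your write-up supplies precisely that adaptation (replacing ``eventually'' by membership in a set of $\mc U$, using a finite intersection of ultrafilter sets, and rescaling in the converse direction exactly as in Proposition~\ref{piconstant_conv}). No gaps.
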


We remark that again {\L}o\'s's Theorem could be used.  However, here the details of the analogue of Lemma~\ref{lem:sentence_for_di} seem complex, and we have chosen not to give them.

\section{Stable rank one}\label{sec:stab_rk1}

We say that a unital Banach algebra $\mc A$ has \emph{stable rank one} if the group of invertible elements $\inv(\mc A)$ is dense in
$\mc A$.  We recall, \cite[Proposition~3.1]{rsr}, that this is equivalent to either the left,
or the right, topological stable rank of $\mc A$ being $1$.  
We recall, see \cite[Lemma~2.1]{horvath1} for example, that having stable rank one implies
being Dedekind finite.  As shown in \cite[Example~2.2]{horvath1} the converse does not hold.
We hence view having stable rank one as a strict strengthening of being Dedekind-finite; and a
strengthening which is much studied for $C^*$-algebras.

\begin{Lem}\label{invsetsequal}
Let $(\mathcal{A}_n)$ be a sequence of unital Banach algebras. Then
\begin{align}
\inv \left( \asy(\mathcal{A}_n) \right) = \pi\Big( \inv\big(\ell^\infty(\mc A_n)\big) \Big).
\end{align}
\end{Lem}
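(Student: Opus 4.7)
The plan is to prove the two inclusions separately. The inclusion $\pi(\inv(\ell^\infty(\mc A_n))) \subseteq \inv(\asy(\mc A_n))$ is immediate: if $A \in \ell^\infty(\mc A_n)$ is invertible with inverse $A^{-1} \in \ell^\infty(\mc A_n)$, then since $\pi$ is a unital algebra homomorphism, $\pi(A)\pi(A^{-1}) = \pi(A^{-1})\pi(A) = 1$, so $\pi(A) \in \inv(\asy(\mc A_n))$.

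For the reverse inclusion, the main idea is a Neumann series lifting argument. Let $a \in \inv(\asy(\mc A_n))$ with inverse $b$, and pick any lifts $A=(a_n), B=(b_n) \in \ell^\infty(\mc A_n)$ with $\pi(A)=a, \pi(B)=b$. Since $ab=ba=1$, we have $AB - 1_{\ell^\infty}, BA - 1_{\ell^\infty} \in c_0(\mc A_n)$, and hence $\|a_n b_n - 1_n\|, \|b_n a_n - 1_n\| \to 0$. Pick $N$ large enough that for every $n \geq N$ both $\|a_n b_n - 1_n\| < 1/2$ and $\|b_n a_n - 1_n\| < 1/2$; by the Carl Neumann series, the elements $u_n := a_n b_n$ and $v_n := b_n a_n$ then lie in $\inv(\mc A_n)$ with $\|u_n^{-1}\|, \|v_n^{-1}\| \leq 2$ for $n \geq N$.

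From $a_n(b_n u_n^{-1}) = 1_n$ and $(v_n^{-1} b_n) a_n = 1_n$ we see that for $n \geq N$ the element $a_n$ is both left and right invertible, hence invertible in $\mc A_n$, and its two-sided inverse is $a_n^{-1} = b_n u_n^{-1}$ (equivalently $v_n^{-1} b_n$). In particular $\|a_n^{-1}\| \leq 2\|B\|$ for all $n \geq N$. We then modify $A$ on the finitely many indices $n < N$ by defining
\[ a_n' := \begin{cases} a_n & \text{if } n \geq N, \\ 1_n & \text{if } n < N, \end{cases} \]
so that $A' := (a_n')$ differs from $A$ only on finitely many coordinates, hence $A - A' \in c_0(\mc A_n)$ and $\pi(A') = \pi(A) = a$. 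Now each $a_n'$ is invertible in $\mc A_n$ with $\sup_n \|(a_n')^{-1}\| \leq \max(1, 2\|B\|) < \infty$, so $A' \in \inv(\ell^\infty(\mc A_n))$ with $(A')^{-1} = ((a_n')^{-1}) \in \ell^\infty(\mc A_n)$. Therefore $a = \pi(A') \in \pi(\inv(\ell^\infty(\mc A_n)))$, finishing the proof.

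The only step with any genuine content is the uniform norm control $\sup_{n\geq N} \|a_n^{-1}\| < \infty$, which is supplied by the Neumann series bound on $\|u_n^{-1}\|$; once that is in hand, trimming off the finite exceptional set is a trivial adjustment that does not change the class of $A$ in $\asy(\mc A_n)$.
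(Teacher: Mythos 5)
Your proof is correct and follows essentially the same route as the paper's: lift the inverse, use the Neumann series to invert $a_nb_n$ and $b_na_n$ for large $n$, deduce that $a_n$ is two-sided invertible with the uniform bound $\|a_n^{-1}\|\leq 2\|B\|$, and patch the finitely many exceptional coordinates. The only cosmetic difference is that the paper explicitly checks that the left and right inverses coincide via the identity $a_nu_n^{-1}=v_n^{-1}a_n$, whereas you invoke the standard fact that an element with a left and a right inverse is invertible; both are fine.
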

\begin{proof}
For the non-trivial direction, let us pick an arbitrary $A = (a_n) \in \ell^\infty(\mc A_n)$
with $\pi(A)\in\inv(\asy(\mc A_n))$.  Thus there is $C=(c_n)\in\ell^\infty(\mc A_n)$ with
$\pi(A)\pi(C) = \pi(C)\pi(A) = 1$, that is, with
\[ \lim_{n \rightarrow \infty} \| c_n a_n - 1_n \| = \lim_{n \rightarrow \infty} \| a_n c_n - 1_n \| = 0. \]
Set $u_n=c_na_n$ and $v_n=a_nc_n$ for each $n$, so there is $N$ with $\|u_n-1_n\|<1/2$ and
$\|v_n-1_n\|<1/2$ for $n\geq N$.  Consequently, for $n\geq N$ we have that $u_n,v_n\in\inv(\mc A_n)$
with $\|u_n^{-1}\|,\|v_n^{-1}\| \leq 2$.  As $a_n u_n = a_n c_n a_n = v_n a_n$ for each $n$,
we have that $a_n u_n^{-1} = v_n^{-1} a_n$ for $n\geq N$.  Observe that
\[ a_n (c_n v_n^{-1}) = 1_n, \quad
(c_n v_n^{-1})a_n = c_n a_n u_n^{-1} = 1_n, \]
and so $a_n\in\inv(\mc A_n)$ with $a_n^{-1} = c_n v_n^{-1}$ and hence $\|a_n^{-1}\| \leq
2\|C\|$.  Define
\[ a_n' = \begin{cases} a_n &\text{if } n\geq N, \\ 1_n &\text{otherwise,} \end{cases}
\qquad
b_n = \begin{cases} a_n^{-1} &\text{if } n\geq N, \\ 1_n &\text{otherwise.} \end{cases} \]
Let $A'=(a_n'), B=(b_n)$ so that $A',B\in\ell^\infty(\mc A_n)$ and $A'B = BA'=1$, so that
$A'\in\inv(\ell^\infty(\mc A_n))$.  As $\pi(A) = \pi(A')$ the claimed result follows.
\end{proof}

\begin{Prop}\label{prop:15}
Let $\mc A$ be a unital Banach algebra such that $\asy(\mc A)$ has stable rank one.
Then also $\mc A$ has stable rank one.
\end{Prop}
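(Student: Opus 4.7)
The plan is to exploit Lemma~\ref{invsetsequal} to reduce a density question in $\asy(\mc A)$ to the pointwise density of invertibles in $\mc A$.

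Given $a\in\mc A$ and $\epsilon>0$, I would embed $a$ into $\asy(\mc A)$ via the constant sequence. More precisely, consider $A=(a,a,a,\ldots)\in\ell^\infty(\mc A)$ and set $\alpha:=\pi(A)\in\asy(\mc A)$. Since $\asy(\mc A)$ has stable rank one, I can find an invertible element $x\in\inv(\asy(\mc A))$ with $\|x-\alpha\|<\epsilon$.

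By Lemma~\ref{invsetsequal}, there exists $B=(b_n)\in\inv(\ell^\infty(\mc A))$ with $x=\pi(B)$. Being invertible in $\ell^\infty(\mc A)$ means each $b_n\in\inv(\mc A)$ (with the inverses $b_n^{-1}$ uniformly bounded in $n$, though I won't need this). Now
\[
\limsup_{n\rightarrow\infty}\|b_n-a\|=\|\pi(B)-\pi(A)\|=\|x-\alpha\|<\epsilon,
\]
so there is some $n_0$ with $\|b_{n_0}-a\|<\epsilon$. Since $b_{n_0}\in\inv(\mc A)$, this shows that $a$ lies in the closure of $\inv(\mc A)$. As $a\in\mc A$ and $\epsilon>0$ were arbitrary, $\mc A$ has stable rank one.

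The only step that could be considered an obstacle is justifying that invertibility in $\ell^\infty(\mc A)$ forces componentwise invertibility; but this is already folded into the proof of Lemma~\ref{invsetsequal} above, so there is in fact nothing further to prove.
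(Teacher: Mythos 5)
Your proof is correct and follows essentially the same route as the paper: embed $a$ as a constant sequence, approximate by an invertible of $\asy(\mc A)$, lift via Lemma~\ref{invsetsequal}, and extract a single coordinate; the paper merely phrases this as a proof by contradiction (with an $\epsilon/2$ buffer) while you argue directly. The componentwise invertibility of an invertible element of $\ell^\infty(\mc A)$ is indeed immediate, since the inverse is itself a bounded sequence and the product is coordinatewise.
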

\begin{proof}
If not, then there is $a\in\mc A$ and $\epsilon > 0$ with $\|a-b\| \geq \epsilon$ for each
$b\in\inv(\mc A)$.  Let $A=(a)\in\ell^\infty(\mc A)$. Since $\asy(\mc A)$ has stable rank one, there is $c\in\inv(\asy(\mc A))$ with $\|\pi(A)-c\|<\epsilon/2$.  By Lemma~\ref{invsetsequal}
there is $C=(c_n)\in\inv(\ell^\infty(\mc A))$ with $\pi(C)=c$, so that
\[ \epsilon/2 > \|\pi(A)-c\| = \|\pi(A)-\pi(C)\| = \limsup_{n \rightarrow \infty} \|a-c_n\|. \]
Hence there is some $n \in \mathbb{N}$ with $\|a-c_n\|<\epsilon$, and as each $c_n$ is invertible, this
gives the required contradiction.
\end{proof}

We now wish to improve this result, and completely characterise when $\asy(\mc A)$ has stable
rank one in terms of ``uniform'' approximation by invertibles for $\mc A$.
We give below, in Theorem~\ref{thm:6}, a counter-example to show that $\mc A$ can have stable
rank one while $\asy(\mc A)$ does not.  For $C^*$-algebras, this does always hold,
see Proposition~\ref{prop:6}.

\begin{Prop}\label{thm:5}
Let $\mc A$ be a unital Banach algebra.  The following are equivalent:
\begin{enumerate}
\item\label{thm:5:1}
There is a function $f:(0,\infty)\rightarrow\mathbb R$ such that for $\epsilon>0$
and $a\in\mc A$ with $\|a\|\leq 1$ there is $b\in\inv(\mc A)$ with $\|a-b\|<\epsilon$ and
$\|b^{-1}\|\leq f(\epsilon)$;
\item\label{thm:5:3} $\ell^\infty(\mc A)$ has stable rank one;
\item\label{thm:5:2} $\asy(\mc A)$ has stable rank one.
\end{enumerate}
\end{Prop}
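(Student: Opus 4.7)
The plan is to prove the implications in the cycle $(\ref{thm:5:1})\Rightarrow(\ref{thm:5:3})\Rightarrow(\ref{thm:5:2})\Rightarrow(\ref{thm:5:1})$. The first two directions are routine constructions, while the third exploits Lemma~\ref{invsetsequal} essentially.

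For $(\ref{thm:5:1})\Rightarrow(\ref{thm:5:3})$, I would take $A=(a_n)\in\ell^\infty(\mc A)$ and rescale so that $\|A\|\leq 1$, hence $\|a_n\|\leq 1$ for every $n$. Given $\epsilon>0$, apply condition~(\ref{thm:5:1}) to each $a_n$ to find $b_n\in\inv(\mc A)$ with $\|a_n-b_n\|<\epsilon$ and $\|b_n^{-1}\|\leq f(\epsilon)$. Then $B=(b_n)\in\ell^\infty(\mc A)$ with $\|B\|\leq 1+\epsilon$, and the sequence $(b_n^{-1})$ is uniformly bounded by $f(\epsilon)$, so $B\in\inv(\ell^\infty(\mc A))$ with $\|A-B\|\leq\epsilon$. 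Rescaling back handles arbitrary $A$.

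For $(\ref{thm:5:3})\Rightarrow(\ref{thm:5:2})$, recall that $\pi:\ell^\infty(\mc A)\to\asy(\mc A)$ is a surjective contractive unital algebra homomorphism, hence sends $\inv(\ell^\infty(\mc A))$ into $\inv(\asy(\mc A))$. Given $a\in\asy(\mc A)$, lift to some $A\in\ell^\infty(\mc A)$ with $\|A\|=\|a\|$, approximate $A$ by an invertible $B\in\ell^\infty(\mc A)$, and note that $\pi(B)\in\inv(\asy(\mc A))$ with $\|a-\pi(B)\|\leq\|A-B\|$.

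The main content is $(\ref{thm:5:2})\Rightarrow(\ref{thm:5:1})$. I would argue by contrapositive. Suppose (\ref{thm:5:1}) fails; then there exist $\epsilon>0$ and, for each $n\in\mathbb N$, an element $a_n\in\mc A$ with $\|a_n\|\leq 1$ such that every $b\in\inv(\mc A)$ with $\|a_n-b\|<\epsilon$ satisfies $\|b^{-1}\|>n$. Set $A=(a_n)\in\ell^\infty(\mc A)$, so $\|A\|\leq 1$. Using the assumption that $\asy(\mc A)$ has stable rank one, pick $c\in\inv(\asy(\mc A))$ with $\|\pi(A)-c\|<\epsilon/2$. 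By Lemma~\ref{invsetsequal}, there exists $C=(c_n)\in\inv(\ell^\infty(\mc A))$ with $\pi(C)=c$. Hence $M:=\|C^{-1}\|<\infty$ and $\|c_n^{-1}\|\leq M$ for every $n$. Since
\[
\limsup_{n\to\infty}\|a_n-c_n\|=\|\pi(A)-\pi(C)\|<\epsilon/2<\epsilon,
\]
there exists $n>M$ with $\|a_n-c_n\|<\epsilon$. Then $c_n\in\inv(\mc A)$ witnesses an approximation with $\|c_n^{-1}\|\leq M<n$, contradicting the defining property of $a_n$. The main obstacle is really only ensuring, via Lemma~\ref{invsetsequal}, that the lift $C$ produces a uniformly bounded sequence of inverses — which is precisely the content of that lemma, so the argument goes through smoothly.
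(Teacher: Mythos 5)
Your proof is correct and follows essentially the same route as the paper: the cycle $(\ref{thm:5:1})\Rightarrow(\ref{thm:5:3})\Rightarrow(\ref{thm:5:2})\Rightarrow(\ref{thm:5:1})$, with the componentwise construction for the first implication and the contrapositive argument via Lemma~\ref{invsetsequal} (yielding a uniform bound on the $\|c_n^{-1}\|$) for the last. The only step left implicit --- that the failure of (\ref{thm:5:1}) yields a \emph{single} $\epsilon>0$ for which the relevant supremum is infinite --- is the same routine quantifier reformulation the paper itself uses, so nothing is missing.
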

\begin{proof}
Suppose $f$ exists.  Let $A=(a_n)\in\ell^\infty(\mc A)$.  By homogeneity we may suppose that
$\|a_n\|\leq 1$ for each $n$.  Given $\epsilon>0$, for each $n$ we can find $b_n\in\inv(\mc A)$
with $\|a_n-b_n\|<\epsilon$ and $\|b_n^{-1}\|\leq f(\epsilon)$.  Thus
$B=(b_n)\in\ell^\infty(\mc A)$ and $(b_n^{-1})$ is also in $\ell^\infty(\mc A)$.
So $B \in \inv(\ell^\infty(\mc A))$ and $\|A-B\|\leq \epsilon$.  As $\epsilon>0$ and $A$ were
arbitrary, this shows that $\ell^\infty(\mc A)$ has stable rank one.

If $\ell^\infty(\mc A)$ has stable rank one, also $\asy(\mc A)$ has stable rank one.

Now suppose that $\asy(\mc A)$ has stable rank one.
For $\epsilon > 0$ and $a\in\mc A$ with $\|a\|\leq 1$ let $I^{\epsilon}_a = \{b\in\inv(\mc A) : \|a-b\|<\epsilon\}$.
That $f$ exists is equivalent to showing that for each $\epsilon>0$,
\[ \sup \big\{ \inf\{ \|b^{-1}\|: b\in I^{\epsilon}_a\} : \|a\|\leq 1 \big\}< \infty. \]
Suppose this is not so.  Then there is $\epsilon > 0$ and a sequence $(a_n)$ with $\|a_n\|\leq 1$ for each $n \in \mathbb{N}$,
and with $\|b^{-1}\|\geq n$ for each $b\in I^{\epsilon}_{a_n}$.  Let $A :=(a_n)\in\ell^\infty(\mc A)$
and $a:=\pi(A)\in\asy(\mc A)$, so there is $c\in\inv(\asy(\mc A))$ with $\|a-c\|<\epsilon/2$.
Again, we can find $C=(c_n)\in\inv(\ell^\infty(\mc A))$ with $c=\pi(C)$. Thus in particular, there is $M>0$ with
$\|c_n^{-1}\|\leq M$ for all $n \in \mathbb{N}$. As $\limsup_{n \rightarrow \infty} \|a_n-c_n\|<\epsilon/2$, there is $N \in \mathbb{N}$ such that $\| a_n -c_n \| < \epsilon/2$ and hence $c_n \in I^{\epsilon}_{a_n}$ for each $n \geq N$. Then for any $n> \max(M,N)$ we obtain $\|c_n^{-1} \| \leq M \leq \max(M,N) < n \leq \|c_n^{-1} \|$, a contradiction. \end{proof}

We remark that it seems somewhat harder to characterise when $\asy(\mc A_n)$ has stable rank one,
for a sequence $(\mc A_n)$ of varying Banach algebras.  In the next section we develop some
results which allow us to say something about this more general situation.

\subsection{Stable rank one as a ``three space property''}
Having stable rank one is \emph{not} a three-space property (see \cite[Examples~4.13]{rsr}),
but in our special situation we can say something.
The following is the Banach-algebraic analogue of the ring-theoretic lemma \cite[Lemma~2.10]{horvath1}. We recall that if $\mathcal{A}$ is a unital algebra over a field $K$ with multiplicative identity $1_{\mathcal{A}}$, and $\mathcal{J} \trianglelefteq \mathcal{A}$ is a two-sided ideal, then $\tilde{\mathcal{J}}$ denotes the unital subalgebra $K1_{\mathcal{A}} + \mathcal{J}$. Moreover, $\inv(\tilde{\mathcal{J}}) = \inv(\mathcal{A}) \cap \tilde{\mathcal{J}}$ (see \cite[Lemma~2.4]{horvath1}).

\begin{Prop}\label{prop:threespsr1}
Let $\mathcal{A}$ be a unital Banach algebra and let $\mathcal{J} \trianglelefteq \mathcal{A}$ be a closed two-sided ideal such that both $\tilde{\mathcal{J}}$ and $\mathcal{A} / \mathcal{J}$ have stable rank one. Let $\pi: \, \mathcal{A} \rightarrow \mathcal{A} / \mathcal{J}$ denote the quotient map. If $\pi(\inv(\mathcal{A})) = \inv(\mathcal{A} / \mathcal{J})$ then $\mathcal{A}$ has stable rank one.
\end{Prop}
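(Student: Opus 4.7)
The plan is to mimic the three-step ``lift, straighten, refine'' strategy that underlies the ring-theoretic analogue \cite[Lemma~2.10]{horvath1}, but now with the analytic quotient-norm estimates that a closed ideal provides. Fix $a \in \mathcal{A}$ and $\epsilon > 0$; I want to produce $u \in \inv(\mathcal{A})$ with $\|a-u\| < \epsilon$.

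First, using that $\mathcal{A}/\mathcal{J}$ has stable rank one, choose $\bar v \in \inv(\mathcal{A}/\mathcal{J})$ with $\|\pi(a) - \bar v\| < \delta_1$ for a small $\delta_1$ to be fixed later. The hypothesis $\pi(\inv(\mathcal{A})) = \inv(\mathcal{A}/\mathcal{J})$ lets me lift $\bar v$ to some $v \in \inv(\mathcal{A})$. Because $\mathcal{J}$ is closed, the quotient norm of $a-v$ equals $\inf_{j\in\mathcal{J}} \|a-v-j\|$, and this infimum is $< \delta_1$. Hence I can pick $j\in\mathcal{J}$ with $\|a - v - j\| < 2\delta_1$. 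Set $a' := v + j$, so $\|a - a'\| < 2\delta_1$ and, crucially, $a' - v = j \in \mathcal{J}$. This last identity is the whole point of the straightening step: it forces $v^{-1}a' = 1 + v^{-1}j$ to lie in $\widetilde{\mathcal{J}}$ exactly, not just approximately.

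Now I can invoke the second hypothesis: since $\widetilde{\mathcal{J}}$ has stable rank one, there exists $w \in \inv(\widetilde{\mathcal{J}})$ with $\|v^{-1}a' - w\| < \delta_2$, for any preassigned $\delta_2 > 0$. By the identity $\inv(\widetilde{\mathcal{J}}) = \inv(\mathcal{A}) \cap \widetilde{\mathcal{J}}$ recalled just before the statement, we get $w \in \inv(\mathcal{A})$, and so $u := vw \in \inv(\mathcal{A})$. The estimate
\[ \|a - vw\| \leq \|a - a'\| + \|v\|\,\|v^{-1}a' - w\| < 2\delta_1 + \|v\|\,\delta_2 \]
then finishes the proof once I fix, say, $\delta_1 = \epsilon/4$ and $\delta_2 = \epsilon/(2\|v\|)$ (note $\|v\| > 0$ as $v$ is invertible, and $\|v\|$ is chosen \emph{before} $\delta_2$).

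The only genuinely delicate point is remembering to perturb $a$ (by the element $j \in \mathcal{J}$) rather than $v$: one needs $v^{-1}a'$ to sit honestly in $\widetilde{\mathcal{J}}$ so that the stable rank one hypothesis on $\widetilde{\mathcal{J}}$ can be applied. Without the surjectivity of $\pi|_{\inv(\mathcal{A})}$, the lift $v$ would not be available, and one would be stuck with an invertible only in $\mathcal{A}/\mathcal{J}$, with no way to pull the approximation back into $\mathcal{A}$; this is exactly where the third hypothesis enters, and is the reason the naive ``lift and refine'' scheme goes through.
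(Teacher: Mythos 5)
Your proof is correct and follows essentially the same route as the paper's: approximate $\pi(a)$ by an invertible in the quotient, lift it to $v\in\inv(\mathcal{A})$ via the surjectivity hypothesis, perturb $a$ by an element of $\mathcal{J}$ so that $a'-v\in\mathcal{J}$, and then use stable rank one of $\widetilde{\mathcal{J}}$ applied to $v^{-1}a'$ (the paper works with $a'd^{-1}$ on the other side, which is immaterial since $\mathcal{J}$ is two-sided). The only cosmetic difference is your harmless factor of $2$ in choosing $j$; since the quotient-norm infimum is already $<\delta_1$, one can take $\|a-v-j\|<\delta_1$ directly.
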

\begin{proof}
Let $a \in \mathcal{A}$ and $\epsilon > 0$ be arbitrary. Since $\mathcal{A} / \mathcal{J}$ has stable rank one, there is $c \in \mathcal{A}$ such that $\pi(c) \in \inv(\mathcal{A} / \mathcal{J})$ and $\Vert \pi(a) - \pi(c) \Vert < \epsilon /2$. By the assumption there is $d \in \inv(\mathcal{A})$ such that $\pi(d) = \pi(c)$ and thus $\Vert \pi(a) - \pi(d) \Vert < \epsilon/2$. Consequently there is $b \in \mathcal{J}$ such that $\Vert a - d - b \Vert < \epsilon /2$. Let us define $a':= b+d$. We observe that
\begin{align}
\pi(a'd^{-1}) = \pi(bd^{-1} +1) = \pi(b)\pi(d^{-1}) + \pi(1) = \pi(1),
\end{align}
equivalently, $1-a'd^{-1} \in \mathcal{J}$. This implies that $a'd^{-1} \in \tilde{\mathcal{J}}$. Now $\tilde{\mathcal{J}}$ has stable rank one, therefore we can pick $f \in \inv(\tilde{\mathcal{J}})=\inv(\mathcal{A}) \cap \tilde{\mathcal{J}}$ such that $\Vert a'd^{-1} - f \Vert < \epsilon / 2 \Vert d \Vert$. Clearly $fd \in \inv(\mathcal{A})$. Also,
\begin{align}
\Vert a - fd \Vert \leq \Vert a - a' \Vert + \Vert a' -fd \Vert \leq \Vert a - a' \Vert + \Vert a'd^{-1} -f \Vert \Vert d \Vert < \epsilon,
\end{align}
which shows that $\mathcal{A}$ has indeed stable rank one.
\end{proof}

\begin{Lem}\label{lem:5}
Let $(\mc A_n)$ be a sequence of unital Banach algebras all of which have stable rank one.
Let $\mc J = c_0(\mc A_n)$, considered as a closed, two-sided ideal in $\mc A=\ell^\infty(\mc A_n)$.
Then $\tilde{\mc J}$ has stable rank one.
\end{Lem}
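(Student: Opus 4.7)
The plan is to explicitly construct, for each $a \in \tilde{\mc J}$ and each $\epsilon > 0$, an invertible element $B$ of $\tilde{\mc J}$ satisfying $\|a-B\| < \epsilon$. Write $a = \lambda \cdot 1 + X$, where $\lambda \in \mathbb{C}$, $1 = (1_n)$ is the unit of $\ell^\infty(\mc A_n)$, and $X = (x_n) \in c_0(\mc A_n)$. The text has recalled that $\inv(\tilde{\mc J}) = \inv(\ell^\infty(\mc A_n)) \cap \tilde{\mc J}$, so the target $B = (b_n)$ must satisfy $b_n \in \inv(\mc A_n)$ with $(\|b_n^{-1}\|)_n$ bounded, and $B - \mu \cdot 1 \in c_0(\mc A_n)$ for some scalar $\mu$.

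The central idea is to replace $\lambda$ by a nearby nonzero scalar $\mu$ and exploit $\|x_n\| \to 0$ via the Carl Neumann series. Take $\mu = \lambda$ if $\lambda \neq 0$, or any $\mu \in \mathbb{C}$ with $0 < |\mu| < \epsilon/2$ if $\lambda = 0$; in either case $|\mu - \lambda| < \epsilon/2$. Fix $N$ so that $\|x_n\| < |\mu|/2$ for all $n \geq N$, and set $b_n := \mu 1_n + x_n$ for such $n$. By the Carl Neumann series, $b_n$ is invertible in $\mc A_n$ with $\|b_n^{-1}\| \leq 2/|\mu|$, while $\|b_n - a_n\| = |\mu - \lambda| < \epsilon/2$.

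For the finitely many $n < N$, I invoke the stable rank one hypothesis on each $\mc A_n$ individually to pick an invertible $b_n \in \mc A_n$ with $\|b_n - a_n\| < \epsilon/2$. There are only finitely many such indices, so the individual norms $\|b_n^{-1}\|$ collectively admit a finite upper bound. Putting the pieces together, $B := (b_n)$ lies in $\inv(\ell^\infty(\mc A_n))$; the sequence $B - \mu \cdot 1$ agrees with $X$ in the tail and so belongs to $c_0(\mc A_n)$ (finitely many coordinates are irrelevant for $c_0$-membership), whence $B \in \tilde{\mc J}$. The estimate $\|a - B\|_\infty \leq \epsilon/2 < \epsilon$ then completes the argument.

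The only real delicacy is the case $\lambda = 0$, where $a$ lies in the non-unital ideal $c_0(\mc A_n)$: any invertible approximation in $\tilde{\mc J}$ must introduce a nonzero scalar component, and it is crucial that we are free to make $|\mu|$ arbitrarily small against the $\epsilon$ budget so that the resulting error is still controlled. Once this is granted, the rest is bookkeeping --- Carl Neumann handles all but finitely many coordinates uniformly, and stable rank one of each $\mc A_n$ handles the finite remainder.
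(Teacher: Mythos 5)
Your proposal is correct and follows essentially the same route as the paper's proof: perturb a zero scalar part to a nearby nonzero one, invert the tail coordinates uniformly via the Neumann series, and use stable rank one of the individual $\mc A_n$ on the finitely many remaining coordinates, relying on $\inv(\tilde{\mc J}) = \inv(\ell^\infty(\mc A_n)) \cap \tilde{\mc J}$. The only cosmetic difference is that the paper first normalises the scalar to $1$ before applying the Neumann series, whereas you work directly with a general nonzero $\mu$.
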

\begin{proof}
This follows from \cite[Theorem~5.2]{rsr}, but we give the argument in this special case.
Let $A=(t1_n + a_n)\in\tilde{\mc J}$, so $\|a_n\|\rightarrow 0$ and $t\in\mathbb C$.  We wish
to approximate $A$ by a member of $\inv(\tilde{\mc J})$.  If $t=0$ then
pick $s\in\mathbb C$ non-zero and close to $t$.  If $A'=(s1_n+a_n)\in\tilde{\mc J}$ can approximated
by a member of $\inv(\tilde{\mc J})$ then so can $A$, because $A'$ is close to $A$.
So we may suppose that $t\not=0$.  If $t^{-1}A = (1_n + t^{-1}a_n)$ can be approximated by
a member of $\inv(\tilde{\mc J})$ then so can $A$.

So we may suppose that $t=1$.  Pick $\epsilon>0$
and choose $N$ so that $\|a_n\| < 1/2$ for $n\geq N$.
For $n\geq N$ let $c_n = -a_n + a_n^2 - a_n^3 + \cdots\in\mc A_n$, hence $\|c_n\| \leq \|a_n\|
(1-\|a_n\|)^{-1}$ and $c_n a_n = a_n c_n = -c_n - a_n$.  For $n<N$ use that $\mc A_n$ has stable
rank one to find $d_n\in\inv(\mc A_n)$ with $\|1_n + a_n - d_n\|\leq\epsilon$.  Set
$c_n = (d_n)^{-1} - 1_n$ for $n<N$.  Set $b_n=d_n-1_n$ for $n<N$ and $b_n = a_n$ for $n\geq N$.
Then $B=(b_n), C=(c_n)\in\mc J$.  Notice that
\[ (1_n+b_n)(1_n+c_n) = \begin{cases} d_n d_n^{-1} &\text{if } n<N, \\
1_n+a_n+c_n+a_nc_n &\text{if } n\geq N, \end{cases} \]
and so $(1_n+b_n)(1_n+c_n)=1_n$ for all $n$.
Similarly $(1_n+c_n)(1_n+b_n)=1_n$ for all $n$.  As $1+B, 1+C \in \tilde{\mc J}$ we
see that $1+B \in \inv(\tilde{\mc J})$.  Finally we consider $\| A-(1+B) \|$.
For $n<N$ we have that $\|(1_n+a_n) - (1_n+b_n)\| = \|1_n + a_n - d_n\|\leq\epsilon$,
while for $n\geq N$ we have that $(1_n+a_n) - (1_n+b_n) = a_n - a_n = 0$.
Hence $\| A-(1+B)\| \leq \epsilon$.
\end{proof}

\begin{Prop}\label{prop:4}
Let $(\mc A_n)$ be a sequence of unital Banach algebras all of which have stable rank one.
$\ell^\infty(\mc A_n)$ has stable rank one if and only if
$\asy(\mc A_n)$ has stable rank one.
\end{Prop}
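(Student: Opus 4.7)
The plan is to handle the two implications separately, with the forward direction being routine and the reverse direction being a direct application of the three-space result Proposition~\ref{prop:threespsr1}.

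For the forward implication, suppose $\ell^\infty(\mc A_n)$ has stable rank one. Since $\pi:\ell^\infty(\mc A_n)\rightarrow\asy(\mc A_n)$ is a continuous, surjective, unital algebra homomorphism, it maps invertible elements to invertible elements. Given any $a\in\asy(\mc A_n)$ and $\epsilon>0$, lift to $A\in\ell^\infty(\mc A_n)$, approximate $A$ within $\epsilon$ by some $B\in\inv(\ell^\infty(\mc A_n))$, and then $\pi(B)\in\inv(\asy(\mc A_n))$ is within $\epsilon$ of $a$. Hence $\asy(\mc A_n)$ has stable rank one.

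For the reverse implication, I would apply Proposition~\ref{prop:threespsr1} with $\mc A = \ell^\infty(\mc A_n)$ and $\mc J = c_0(\mc A_n)$, so that $\mc A/\mc J = \asy(\mc A_n)$. The three hypotheses of that proposition are all already in hand: (i)~$\tilde{\mc J}$ has stable rank one by Lemma~\ref{lem:5} (using that each $\mc A_n$ has stable rank one); (ii)~$\mc A/\mc J = \asy(\mc A_n)$ has stable rank one by assumption; and (iii)~the crucial lifting condition $\pi(\inv(\mc A)) = \inv(\mc A/\mc J)$ is precisely the content of Lemma~\ref{invsetsequal}. Applying Proposition~\ref{prop:threespsr1} then yields that $\ell^\infty(\mc A_n)$ has stable rank one.

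No step here is genuinely difficult, since the technical work has been carried out in Lemma~\ref{invsetsequal}, Lemma~\ref{lem:5}, and Proposition~\ref{prop:threespsr1}; the proof is essentially a citation of those results, with the only point to verify being that the three hypotheses match the statements of the cited lemmas.
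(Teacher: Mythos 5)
Your proposal is correct and follows exactly the same route as the paper: the forward direction by pushing invertible approximants through the quotient map, and the reverse direction by applying Proposition~\ref{prop:threespsr1} with $\mc A = \ell^\infty(\mc A_n)$ and $\mc J = c_0(\mc A_n)$, with the hypotheses supplied by Lemma~\ref{lem:5} and Lemma~\ref{invsetsequal}. Nothing is missing.
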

\begin{proof}
If $\ell^\infty(\mc A_n)$ has stable rank one then clearly so does $\asy(\mc A_n)$.
Conversely, set $\mc A = \ell^\infty(\mc A_n)$ and $\mc J = c_0(\mc A_n)$ so that
$\asy(\mc A_n) = \mc A/\mc J$.
By Lemma~\ref{lem:5}, we see that $\tilde{\mc J}$ has stable rank one, and by
Lemma~\ref{invsetsequal} we know that $\inv(\mc A/\mc J) = \pi(\inv(\mc A))$.
Thus Proposition~\ref{prop:threespsr1} applies to show that $\mc A$ has
stable rank one.
\end{proof}

\subsection{For $C^*$-algebras}
We recall that in a $C^*$-algebra $\mathcal{A}$ an element $a \in \mathcal{A}$ has a \textit{unitary polar decomposition} if there exists a unitary $u \in \mathcal{A}$ such that $a = u \vert a \vert$.

\begin{Lem}\label{cstarpolar}
If $\mathcal{A}$ is a unital $C^*$-algebra such that every element of $\mathcal{A}$ has a unitary
polar decomposition then $\mathcal{A}$ has stable rank one.
\end{Lem}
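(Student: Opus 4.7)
The plan is to use the hypothesis directly to produce, for any $a\in\mathcal{A}$ and any $\epsilon>0$, an explicit invertible element within $\epsilon$ of $a$. Given $a\in\mathcal{A}$, by assumption there is a unitary $u\in\mathcal{A}$ with $a=u|a|$. Since $|a|\in\mathcal{A}$ is positive, the element $|a|+\epsilon 1_{\mathcal{A}}$ is self-adjoint with spectrum contained in $[\epsilon,\infty)$, hence invertible in $\mathcal{A}$ by continuous functional calculus.

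I would then set $b:=u(|a|+\epsilon 1_{\mathcal{A}})$. This is a product of two invertible elements (a unitary and a strictly positive invertible element), hence $b\in\inv(\mathcal{A})$. Moreover,
\[ \|a-b\| = \|u|a| - u|a| - \epsilon u\| = \epsilon\|u\| = \epsilon, \]
where in the last step we used that $u$ is a unitary in a unital $C^*$-algebra, hence of norm one. As $a\in\mathcal{A}$ and $\epsilon>0$ were arbitrary, this shows that $\inv(\mathcal{A})$ is dense in $\mathcal{A}$, i.e.\ $\mathcal{A}$ has stable rank one.

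There is essentially no obstacle here: the argument is just ``perturb the modulus part to make it strictly positive, then re-multiply by the unitary''. The only point worth double-checking is the invertibility of $|a|+\epsilon 1_{\mathcal{A}}$, which is immediate from the spectral mapping theorem, and the norm computation, which uses that $\|u\|=1$ for any unitary in a unital $C^*$-algebra.
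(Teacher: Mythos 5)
Your argument is correct and is essentially identical to the paper's own proof: both perturb $|a|$ to the invertible element $|a|+\epsilon 1$ and multiply back by the unitary $u$, obtaining $b=u(|a|+\epsilon 1)\in\inv(\mathcal{A})$ with $\|a-b\|=\epsilon$. Nothing further is needed.
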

\begin{proof}
Let $a \in \mathcal{A}$  and $\epsilon > 0$ be fixed. By the assumption there is a unitary $u \in \mathcal{A}$ such that $a = u \vert a \vert$.  By the Spectral Theorem, we know that $|a|+\epsilon 1
\in \inv(\mc A)$, hence also $b = u(|a|+\epsilon 1) \in\inv(\mc A)$.  Then $\|a-b\|
= \|\epsilon u\| = \epsilon$.  It follows that $\mc A$ has stable rank one.
\end{proof}

\begin{Prop}\label{prop:6}
Let $(\mc A_n)$ be a sequence of unital $C^*$-algebras having stable rank one.
Then $\asy(\mc A_n)$, and hence also $\ell^\infty(\mc A_n)$, have stable rank one.
\end{Prop}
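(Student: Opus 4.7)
The plan is to exploit the unitary polar decomposition of invertibles in $C^*$-algebras to obtain a \emph{uniform} approximation of arbitrary elements by invertibles, with a quantitative bound on the norm of the inverse that depends only on the approximation tolerance. By Proposition~\ref{prop:4}, it suffices to show that $\ell^\infty(\mc A_n)$ has stable rank one, since each $\mc A_n$ is assumed to have stable rank one; the statement for $\asy(\mc A_n)$ then follows.

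First I would record an observation that is standard but central: in any unital $C^*$-algebra $\mc A$, every $c\in\inv(\mc A)$ admits a unitary polar decomposition $c=u|c|$. Indeed $c^*c$ is positive and invertible, so by functional calculus $|c|=(c^*c)^{1/2}$ is invertible, and one checks directly that $u:=c|c|^{-1}$ satisfies $u^*u=|c|^{-1}(c^*c)|c|^{-1}=1$ and $uu^*=c(c^*c)^{-1}c^*=1$. This is exactly the point where we can avoid the general Banach-algebraic obstruction visible in Theorem~\ref{thm:6}.

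Second, I would use this to upgrade stable rank one to a uniform approximation statement. Fix a unital $C^*$-algebra $\mc A$ with stable rank one, let $a\in\mc A$ and $\epsilon>0$. By stable rank one, choose $c\in\inv(\mc A)$ with $\|a-c\|<\epsilon/2$, and write $c=u|c|$ with $u$ unitary. Set
\[ b := u\bigl(|c|+(\epsilon/2)1_{\mc A}\bigr). \]
Then $b$ is invertible with $b^{-1}=(|c|+(\epsilon/2)1_{\mc A})^{-1}u^*$, and by the spectral theorem $\|b^{-1}\|\leq 2/\epsilon$. Moreover $\|c-b\|=\|u\cdot(\epsilon/2)1_{\mc A}\|=\epsilon/2$, so $\|a-b\|<\epsilon$. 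Crucially, the bound $\|b^{-1}\|\leq 2/\epsilon$ is independent of $\mc A$ and of $a$.

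Third, I would apply this pointwise to the sequence. Given $A=(a_n)\in\ell^\infty(\mc A_n)$ and $\epsilon>0$, for each $n$ the previous paragraph produces $b_n\in\inv(\mc A_n)$ with $\|a_n-b_n\|<\epsilon$ and $\|b_n^{-1}\|\leq 2/\epsilon$. Hence $B:=(b_n)\in\ell^\infty(\mc A_n)$ and $(b_n^{-1})\in\ell^\infty(\mc A_n)$, so $B\in\inv(\ell^\infty(\mc A_n))$ and $\|A-B\|\leq\epsilon$. Thus $\ell^\infty(\mc A_n)$ has stable rank one, and an appeal to Proposition~\ref{prop:4} finishes the proof. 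There is no real obstacle here; the whole proof rests on the fact that in the $C^*$-setting the function $f$ of Proposition~\ref{thm:5}(\ref{thm:5:1}) can be taken to be $f(\epsilon)=2/\epsilon$, independently of the algebra.
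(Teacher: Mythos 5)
Your proof is correct, and while it rests on the same two $C^*$-ingredients as the paper's argument (the unitary polar decomposition $c=u|c|$ of an invertible element, and the spectral shift $|c|\mapsto|c|+\tfrac{\epsilon}{2}1$), it is organised along a genuinely different route. The paper first lifts the polar decomposition to $\asy(\mc A_n)$ via Loring's Lemma~19.2.2 --- the point there being that the unitaries $u_n$ stay bounded even when $(\|x_n^{-1}\|)$ does not --- deduces from Lemma~\ref{cstarpolar} that $\asy(\mc A_n)$ has stable rank one, and then has to invoke the three-space-property machinery (Lemma~\ref{lem:5} and Propositions~\ref{prop:threespsr1} and~\ref{prop:4}) to pass back to $\ell^\infty(\mc A_n)$. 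You instead establish the uniform norm-control criterion of Proposition~\ref{thm:5}(\ref{thm:5:1}) directly in each component, with the explicit and algebra-independent bound $f(\epsilon)=2/\epsilon$; stable rank one for $\ell^\infty(\mc A_n)$ then follows at once, and $\asy(\mc A_n)$ inherits it as a quotient --- so your appeal to Proposition~\ref{prop:4} only uses its trivial direction, and Loring's lemma and the three-space argument are not needed at all. Notably, the Remark following Proposition~\ref{prop:6} in the paper observes that the uniform norm control is obtained only \emph{a posteriori} from Proposition~\ref{thm:5} and asks whether it ``could be proved directly, in some sense''; your second paragraph is precisely such a direct proof, so your argument both simplifies the proposition and answers that remark.
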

\begin{proof}
This relies on an observation of Loring, \cite[Lemma~19.2.2]{loring}, which says that under this
hypothesis, every element of $\asy(\mc A_n)$ has a unitary polar decomposition.  The result
now follows from Lemma~\ref{cstarpolar} and Proposition~\ref{prop:4}.

For completeness, we give the short proof of \cite[Lemma~19.2.2]{loring}.  
Let $a\in\asy(\mc A_n)$ be $a=\pi(A)$ for some $A=(a_n)\in\ell^\infty(\mc A_n)$.  As for each
$n$ we have that $\inv(\mc A_n)$ is dense in $\mc A_n$, we can find $x_n\in\inv(\mc A_n)$
with $\lim_{n \rightarrow \infty} \|a_n - x_n\|=0$, so that $a = \pi(A')$ with $A'=(x_n) \in\ell^\infty(\mc A_n)$.
Notice that $(\|x_n^{-1}\|)$ might well be unbounded.  For each $n$ set
$u_n = x_n (x_n^*x_n)^{-1/2}$, which is a unitary in $\mc A_n$ with $u_n |x_n| = x_n$.  Then $U=(u_n)
\in\ell^\infty(\mc A_n)$ and $X=(|x_n|)\in \ell^\infty(\mc A_n)$ are such that $U$ is unitary
and $X = |A'|$, and $A' = UX$.  By uniqueness of positive square-roots, $\pi(X) = |a|$ and so
$\pi(U) |a| = a$ in $\asy(\mc A_n)$ is the required unitary polar decomposition.
\end{proof}

\begin{Rem}
This result, together with Proposition~\ref{thm:5}, shows that if $\mc A$ is a $C^*$-algebra
with stable rank one, then we get a form of uniform norm control on the approximating
invertible elements.  It would be interesting to know if this could be proved ``directly'',
in some sense.
\end{Rem}

\subsection{A counter-example}
We shall now present a construction which shows that Proposition~\ref{prop:6} does not hold
for Banach algebras.  

\begin{Thm}\label{thm:6}
The Banach algebra $\mc A = \ell^1(\mathbb Z)$, equipped with the convolution product, has stable
rank one.  For any non-principal ultrafilter $\mc U$ we have that $(\mc A)_{\mc U}$ does not
have stable rank one, and hence also $\asy(\mc A)$ and $\ell^\infty(\mc A)$ do not have
stable rank one.
\end{Thm}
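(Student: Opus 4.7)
The plan is to handle the two claims separately. For the stable rank one of $\mc A = \ell^1(\mathbb Z)$, I would identify $\mc A$ with the Wiener algebra $A(\mathbb T)$ via the Fourier transform. Wiener's theorem characterises the invertible elements of $A(\mathbb T)$ as those functions with no zero on $\mathbb T$, and density follows by approximating any $f \in A(\mathbb T)$ first by a trigonometric Laurent polynomial (in the Wiener norm), and then perturbing the finitely many zeros of this polynomial off $\mathbb T$ at negligible cost in the Wiener norm.

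For the failure in the ultrapower, the strategy is via Proposition~\ref{thm:5}: I would disprove condition~(1) of that proposition. That is, exhibit a fixed $\epsilon_0 > 0$ and a sequence $(a_n) \subset \mc A$ with $\|a_n\|_1 \leq 1$ such that, as $n \to \infty$, every invertible $b \in \mc A$ within distance $\epsilon_0$ of $a_n$ must have $\|b^{-1}\|_1 \to \infty$. Natural candidates for such $(a_n)$ are elements of $A(\mathbb T)$ designed so that on $\mathbb T$ the absolute value $|a_n|$ is ``bounded by $\epsilon_0$ robustly'' on a region whose geometry forces, after $L^2$-type estimates (using $\|g\|_{A(\mathbb T)} \geq \|g\|_{L^2}$), the $\ell^1$ norm of $b^{-1}$ to grow in $n$. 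For example, one can work with high-order zeros or with carefully chosen weighted/normalised families, and refine the basic $L^2$ estimate into one that extracts genuine growth rather than merely the universal $\Omega(1/\epsilon_0)$ bound.

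Given such a sequence, Proposition~\ref{thm:5} immediately yields the failure of stable rank one for both $\asy(\mc A)$ and $\ell^\infty(\mc A)$. To also conclude failure for $(\mc A)_{\mc U}$, consider the element $a = [(a_n)] \in (\mc A)_{\mc U}$: any invertible approximator within $\epsilon_0/2$ of $a$ in the ultrapower lifts, via the ultrapower analogue of Lemma~\ref{invsetsequal}, to a sequence $(b_n)$ of invertibles in $\mc A$ with $\sup_{n \in U} \|b_n^{-1}\|_1 \leq M$ and $\|a_n - b_n\|_1 \leq \epsilon_0$ on some $\mc U$-large set $U$; choosing $n \in U$ large enough that the forced lower bound on $\|b_n^{-1}\|_1$ exceeds $M$ gives a contradiction. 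The main obstacle is precisely the construction of $(a_n)$ and the uniform lower bound on $\|b^{-1}\|_1$ over \emph{all} invertible approximators, not merely canonical choices of the form $a_n + i\eta$: the naive pointwise argument at a single zero of $a_n$ only yields $\|b^{-1}\|_1 \gtrsim 1/\epsilon_0$, so genuine growth in $n$ must be extracted from a more refined Fourier-analytic or geometric property of the family $(a_n)$.
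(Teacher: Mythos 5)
Your treatment of the first claim is fine: identifying $\ell^1(\mathbb Z)$ with the Wiener algebra $A(\mathbb T)$, invoking Wiener's theorem, and perturbing the zeros of an approximating trigonometric polynomial off $\mathbb T$ (or, more simply, subtracting a generic small constant $\lambda$, which misses the measure-zero curve $p(\mathbb T)$) is exactly the content of the Dawson--Feinstein result the paper cites. Your reduction of the statement for $(\mc A)_{\mc U}$ to a $\mc U$-large set of indices via the ultrapower analogue of Lemma~\ref{invsetsequal} is also sound, as is the passage from the failure of condition (1) of Proposition~\ref{thm:5} to the failure of stable rank one for $\asy(\mc A)$ and $\ell^\infty(\mc A)$.

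The gap is that you never construct the object on which everything else depends: a normalised sequence $(a_n)$ in $\ell^1(\mathbb Z)$ and a fixed $\epsilon_0>0$ such that \emph{every} invertible $b$ with $\|a_n-b\|_1\leq\epsilon_0$ satisfies $\|b^{-1}\|_1\rightarrow\infty$. You describe what such a family ought to look like (``high-order zeros'', ``carefully chosen weighted families'') and you concede yourself that the naive pointwise and $L^2$ estimates only yield the $n$-independent bound $\|b^{-1}\|\gtrsim 1/\epsilon_0$. This is not a routine detail to be filled in: the paper explicitly lists ``Can one find a counter-example as in Theorem~\ref{thm:6} which uses directly the criteria established in Proposition~\ref{thm:5}?'' among its open questions, so the very step you defer is the one the authors could not (or chose not to) carry out directly. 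Their actual proof is entirely structural and avoids any such quantitative Fourier analysis: they map $(\mc A)_{\mc U}$ onto $(\ell^1(\mathbb Z/p_n\mathbb Z))_{\mc U}$, then with dense range onto $\ell^1(G)$ where $G$ is the set-theoretic ultraproduct of the groups $\mathbb Z/p_n\mathbb Z$ (a divisible, torsion-free abelian group of continuum cardinality, hence isomorphic to $\oplus_I\mathbb Q$), then apply the Gel'fand transform to land densely in $C(\widehat{\mathbb Q}^{\,I})$, and finally combine Rieffel's formula for the stable rank of $C(X)$ with the fact that an infinite power of a connected compact metric space has infinite covering dimension; since a unital dense-range homomorphism carries a dense set of invertibles to a dense set of invertibles, stable rank one would pass down the chain, giving the contradiction. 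Unless you can actually exhibit the family $(a_n)$ and prove the uniform lower bound over \emph{all} invertible approximants, what you have is a plan for a (currently open) alternative proof, not a proof.
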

\begin{proof}
Let $(p_n)$ be an increasing enumeration of the primes.  We shall first show that the ultraproduct
$(\ell^1(\mathbb Z/p_n\mathbb Z))_{\mc U}$ does not have stable rank one.  We do this by
collecting certain facts:
\begin{enumerate}
\item There is a contractive surjective homomorphism from
$(\ell^1(\mathbb Z/p_n \mathbb Z))_{\mc U}$ to $\ell^1(G)$ where $G$ is the set-theoretic
ultraproduct $(\mathbb Z/p_n\mathbb Z)_{\mc U}$.  This follows from \cite[Section~2.3.2]{dk}
following \cite[Section~5.4]{daw}.  Notice that $G$ is also
a commutative group.
\item The proof of \cite[Theorem~7.1]{ko} shows that $G$ is divisible and torsion-free and
that $G$ has cardinality the continuum.  It follows that there is a set $I$ of continuum
cardinality with $G$ and $\oplus_I \mathbb Q$ isomorphic as $\mathbb Q$-vector spaces, so certainly
isomorphic as abelian groups.  So $\ell^1(G)$ is Banach-algebra isomorphic to
$\ell^1(\oplus_I \mathbb Q)$.
\item Let $H = \oplus_I \mathbb Q$ and let $\widehat H$ be the dual group, a compact abelian
group.  By \cite[Proposition~4.14]{fol}, for example, we know that the Gel'fand tranform
(identified with the Fourier transform) gives a contractive homomorphism $\mc G:
\ell^1(\oplus_I \mathbb Q) \rightarrow C(\widehat H)$ which has dense range.
\item The compact space $\widehat H$ consists of all group homomorphisms $\oplus_I \mathbb Q
\rightarrow\mathbb T$, equipped with the topology of pointwise convergence.  It is easy to see
that this agrees with the compact space $(\widehat{\mathbb Q})^I$.  So $C(\widehat H)$ is
isomorphic with $C((\widehat{\mathbb Q})^I)$.
\item There is hence a dense range homomorphism $(\ell^1(\mathbb Z/p_n \mathbb Z))_{\mc U}
\rightarrow C((\widehat{\mathbb Q})^I)$.  
\item The compact abelian group $\widehat{\mathbb Q}$ is identified in \cite[Section~25.4]{hr}.
In particular, it is isomorphic to the ``$a$-adic solenoid'' $\Sigma_a$ for a suitable choice of
sequence $a$.  These compact groups are studied in \cite[Section~10]{hr}, and in particular,
\cite[Theorem~10.13]{hr} shows that $\Sigma_a$ is connected (and compact Hausdorff).
It follows from the definition, and \cite[Theorem~10.5]{hr}, that $\Sigma_a$ is a metrisable
space.
\item We now consider the \emph{covering dimension} of a topological space, see for example
\cite[Chapter~3]{p}.  In particular, it follows from \cite[Proposition~1.3]{p} that for a
Hausdorff space $X$, if $\dim(X)=0$ then $X$ is totally disconnected.
Thus $\dim(\widehat{\mathbb Q}) \geq 1$.
We shall also consider the \emph{small inductive dimension} of a topological space,
\cite[Chapter~4]{p}.  For a metric space, this is the same as the covering dimension,
\cite[Section~4, Chapter~5]{p}.  Finally, if $X$ is a compact metric space with $\dim(X)\geq 1$,
and $I$ an infinite set, then $X^I$ has infinite dimension.  This is shown for the small
inductive dimension in \cite[Example~1.5.17]{eng}, and hence also holds for the covering
dimension.
\item Rieffel's original motivation in \cite{rsr} was to generalise the covering dimension
to $C^*$-algebras (compare \cite[Theorem~1.1]{rsr} with \cite[Proposition~3.3.2]{p} for
example).  In particular, \cite[Proposition~1.7]{rsr} shows that if $X$ is a compact
(Hausdorff) space then the topological stable rank of $C(X)$ is $\lfloor \dim(X)/2 \rfloor + 1$.
\item In particular, $\widehat{\mathbb Q}^I$ has infinite dimension.  It
follows that $C(\widehat{\mathbb Q}^I)$ does not have stable rank one.  Hence also
$(\ell^1(\mathbb Z/p_n \mathbb Z))_{\mc U}$ does not have stable rank one.
\end{enumerate}

As $\ell^1(\mathbb Z) \rightarrow \ell^1(\mathbb Z/p_n \mathbb Z)$ is a quotient map,
for each $n$, it follows that $(\mc A)_{\mc U} \rightarrow
(\ell^1(\mathbb Z/p_n \mathbb Z))_{\mc U}$ is a quotient map, and so $(\mc A)_{\mc U}$
does not have stable rank one.  As $\asy(\mc A)$ quotients onto $(\mc A)_{\mc U}$,
also $\asy(\mc A)$ does not have stable rank one.  By Lemma~\ref{invsetsequal},
also $\ell^\infty(\mc A)$ does not have stable rank one.

Finally, we claim that $\mc A$ does have stable rank one.  This follows from the
more general result \cite[Corollary~1.6]{df}.  In fact, using that $\widehat{\mathbb Z}
= \mathbb T$ and that $C(\mathbb T)$ obviously has a dense set of invertibles, we can
instead appeal to \cite[Proposition~1.3]{df}.
\end{proof}

\begin{Rem}
The reader may wonder where the argument in the proof of Theorem \ref{thm:6} breaks if we attempt to apply it to group $C^*$-algebras instead of $\ell^1$ group algebras.  More concretely, let us consider the $\mathcal{A} :=C^*(\mathbb{Z})$, which is isomorphic as a $C^*$-algebra to $C(\mathbb{T})$.  As $\mathbb Z$ and $\mathbb Z / p_n\mathbb Z$ are discrete and amenable, the group
homomorphism $\mathbb Z \rightarrow \mathbb Z / p_n\mathbb Z$ induces a surjective $*$-homomorphism 
$\mc A \rightarrow C^*(\mathbb{Z} / p_n \mathbb{Z})$.  From \cite[Proposition~1.7]{rsr}, we see that $\mathcal{A}$ has stable rank one, and hence so does $C^*(\mathbb{Z} / p_n \mathbb{Z})$.  From Proposition \ref{prop:6} we know that $\asy(C^*(\mathbb{Z} / p_n \mathbb{Z}))$, and thus also $(C^*(\mathbb{Z} / p_n \mathbb{Z}))_{\mathcal{U}}$, has stable rank one.
	
On the other hand, when inspecting and adapting the reasoning in the proof of Theorem \ref{thm:6}, we find that there are $*$-isomorphisms between $C^*(G)$ and $C^*(\oplus_I \mathbb Q)$ and also with $C(\widehat{\mathbb Q}^I)$, where $G$ is the set-theoretic ultraproduct $(\mathbb Z/p_n\mathbb Z)_{\mc U}$.  As we have just seen, $C(\widehat{\mathbb Q}^I)$ does not have stable rank one, hence neither does $C^*(G)$. This means that there cannot be a continuous dense range algebra homomorphism from $(C^*(\mathbb{Z} / p_n \mathbb{Z}))_{\mathcal{U}}$ to $C^*(G)$, unlike for $\ell^1$ group algebras.
\end{Rem}

\subsection{For ultraproducts}

We quickly consider what happens when $\asy(\mc A_n)$ is replaced by an ultraproduct
$(\mc A_n)_{\mc U}$.  We first adapt Lemma~\ref{invsetsequal}.

\begin{Lem}\label{invsetsequal_up}
Let $(\mc A_n)$ be a sequence of unital Banach algebras, let $\mc U$ be an ultrafilter, and
denote by $\pi$ the quotient map $\ell^\infty(\mc A_n) \rightarrow (\mc A_n)_{\mc U}$.  Then
\[ \inv\big( (\mc A_n)_{\mc U} \big) = \pi\big( \inv(\ell^\infty(\mc A_n)) \big). \]
\end{Lem}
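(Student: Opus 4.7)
The plan is to adapt verbatim the argument of Lemma~\ref{invsetsequal}, replacing statements of the form ``for all $n\geq N$'' with ``for all $n$ in some set $U\in\mc U$''. The forward inclusion $\pi(\inv(\ell^\infty(\mc A_n))) \subseteq \inv((\mc A_n)_{\mc U})$ is immediate because $\pi$ is a unital homomorphism, so all the work is in the reverse inclusion.

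For the non-trivial direction, I would start with $A=(a_n) \in \ell^\infty(\mc A_n)$ such that $\pi(A) \in \inv((\mc A_n)_{\mc U})$, and select a lift $C=(c_n) \in \ell^\infty(\mc A_n)$ of the inverse. The definition of the ultraproduct norm gives
\[ \lim_{n\to\mc U} \|c_n a_n - 1_n\| = 0 = \lim_{n\to\mc U} \|a_n c_n - 1_n\|, \]
so the set $U := \{n : \|c_n a_n - 1_n\| < 1/2 \text{ and } \|a_n c_n - 1_n\| < 1/2\}$ lies in $\mc U$. For $n \in U$, the Carl Neumann series shows that $u_n := c_n a_n$ and $v_n := a_n c_n$ are invertible in $\mc A_n$ with $\|u_n^{-1}\|, \|v_n^{-1}\| \leq 2$. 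The same manipulation as in the proof of Lemma~\ref{invsetsequal}, namely observing that $a_n(c_n v_n^{-1}) = 1_n$ and $(c_n v_n^{-1}) a_n = c_n a_n u_n^{-1} = 1_n$, yields $a_n \in \inv(\mc A_n)$ with $\|a_n^{-1}\| \leq 2 \|C\|$ for each $n \in U$.

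To finish, I would redefine the sequence on the complement of $U$: set $a_n' = a_n$ and $b_n = a_n^{-1}$ for $n \in U$, and $a_n' = b_n = 1_n$ for $n \notin U$. Then $A' := (a_n')$ and $B := (b_n)$ both lie in $\ell^\infty(\mc A_n)$ (because $\|b_n\| \leq \max(1, 2\|C\|)$ uniformly), and clearly $A'B = BA' = 1$ in $\ell^\infty(\mc A_n)$, so $A' \in \inv(\ell^\infty(\mc A_n))$. Since $\{n : a_n' = a_n\} = U \in \mc U$, we have $\pi(A') = \pi(A)$, completing the proof.

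No real obstacle is expected: the only point to be slightly careful about is that modifying the sequence on $\mathbb N \setminus U$ leaves the class in $(\mc A_n)_{\mc U}$ unchanged precisely because $\mathbb N \setminus U \notin \mc U$, which is just the ultrafilter analogue of the ``redefine on a finite set'' move in Lemma~\ref{invsetsequal}.
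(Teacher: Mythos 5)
Your proposal is correct and follows exactly the route the paper takes: its proof of Lemma~\ref{invsetsequal_up} likewise just reruns the argument of Lemma~\ref{invsetsequal}, replacing ``for $n\geq N$'' by ``for $n$ in some set of $\mc U$'' and redefining the sequence off that set. No issues.
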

\begin{proof}
We can follow closely the proof of Lemma~\ref{invsetsequal}.  Let $a_n, c_n, u_n, v_n$ be as
before, and notice now that as $\pi(A)\pi(C) = \pi(C)\pi(A)=1$ there is some set $X\in\mc U$
with $\|u_n-1_n\|<1/2$ and $\|v_n-1_n\|<1/2$ for $n\in X$.  Then $a_n\in\inv(\mc A_n)$ and
$\|a_n^{-1}\|\leq 2\|C\|$ for each $n\in X$.  Hence we can find $A'\in\ell^\infty(\mc A_n)$
with $A'$ invertible and $\pi(A) = \pi(A')$.
\end{proof}

Proposition~\ref{prop:15} continues to hold, so if $(\mc A)_{\mc U}$ has stable rank one, then
so does $\mc A$.  Similarly, a close examination of the proof of Proposition~\ref{thm:5} shows
that it holds also for $(\mc A)_{\mc U}$.  In particular, we have:

\begin{Cor}
Let $\mc A$ be a Banach algebra.  Then $\asy(\mc A)$ has stable rank one if and only if
$(\mc A)_{\mc U}$ has stable rank one.
\end{Cor}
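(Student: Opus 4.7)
The strategy is to show that both properties are equivalent to the uniform norm-control condition~(\ref{thm:5:1}) in Proposition~\ref{thm:5}. Since Proposition~\ref{thm:5} already gives $\asy(\mc A)$ stable rank one $\iff$ condition~(\ref{thm:5:1}), it suffices to prove that $(\mc A)_{\mc U}$ has stable rank one $\iff$ condition~(\ref{thm:5:1}) also holds. Chaining these two equivalences yields the corollary.

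The forward direction is the straightforward adaptation. Assume $f:(0,\infty)\to\mathbb R$ exists as in Proposition~\ref{thm:5}(\ref{thm:5:1}). Given $a\in(\mc A)_{\mc U}$, lift $a = \pi_{\mc U}(A)$ with $A=(a_n)\in\ell^\infty(\mc A)$, and normalise so that $\|a_n\|\leq 1$ for each $n$. For $\epsilon>0$ and each $n$ pick $b_n\in\inv(\mc A)$ with $\|a_n-b_n\|<\epsilon$ and $\|b_n^{-1}\|\leq f(\epsilon)$. Then $B=(b_n)\in\inv(\ell^\infty(\mc A))$, hence $\pi_{\mc U}(B)\in\inv((\mc A)_{\mc U})$ with $\|a-\pi_{\mc U}(B)\|\leq\epsilon$; varying $\epsilon$ shows stable rank one.

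For the converse, I would essentially copy the contrapositive argument given in the proof of Proposition~\ref{thm:5}, replacing $\asy(\mc A)$ with $(\mc A)_{\mc U}$ and using Lemma~\ref{invsetsequal_up} in place of Lemma~\ref{invsetsequal}. Suppose condition~(\ref{thm:5:1}) fails: then there is $\epsilon>0$ and a sequence $(a_n)$ in the closed unit ball of $\mc A$ such that $\|b^{-1}\|\geq n$ for every $b\in I^\epsilon_{a_n}=\{b\in\inv(\mc A):\|a_n-b\|<\epsilon\}$. Set $A=(a_n)\in\ell^\infty(\mc A)$ and $a=\pi_{\mc U}(A)$. Using stable rank one in $(\mc A)_{\mc U}$, choose $c\in\inv((\mc A)_{\mc U})$ with $\|a-c\|<\epsilon/2$; by Lemma~\ref{invsetsequal_up} lift $c=\pi_{\mc U}(C)$ with $C=(c_n)\in\inv(\ell^\infty(\mc A))$, so $M:=\sup_n\|c_n^{-1}\|<\infty$. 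Since $\lim_{n\to\mc U}\|a_n-c_n\|<\epsilon/2$, the set $V=\{n:\|a_n-c_n\|<\epsilon\}$ lies in $\mc U$, and therefore is infinite; pick any $n\in V$ with $n>M$. Then $c_n\in I^\epsilon_{a_n}$, which forces $n\leq\|c_n^{-1}\|\leq M<n$, a contradiction.

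The main step to be careful about is the lifting: the ultrafilter version of Lemma~\ref{invsetsequal} (given as Lemma~\ref{invsetsequal_up}) is exactly what lets the contradiction argument go through, since it produces a single bound $M$ valid on all indices, not just an ultrafilter-large set. Once this is in hand, the rest of both directions reduces to the same manipulations already carried out for $\asy(\mc A)$, and no further ideas are required.
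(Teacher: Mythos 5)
Your proposal is correct and follows essentially the same route as the paper: the authors justify the corollary by noting that Proposition~\ref{thm:5} (equivalence with the uniform norm-control condition, via the lifting Lemma~\ref{invsetsequal_up}) carries over verbatim to $(\mc A)_{\mc U}$, which is precisely the adaptation you carry out. Your use of the non-principality of $\mc U$ to ensure the set $V$ is infinite is the right (and only) point where the ultrafilter argument differs from the $\limsup$ argument, and it is handled correctly.
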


Finally, obviously then Proposition~6.9 shows that if $(\mc A_n)$ is a sequence of unital
$C^*$-algebras having stable rank one, then also $(\mc A_n)_{\mc U}$ also has stable rank one.
Theorem~\ref{thm:6} shows that this is not true for Banach algebras replacing $C^*$-algebras.

\section{Open questions}

We close the paper with some open questions.

\begin{itemize}
\item Does the analogue of Proposition~\ref{prop:10} hold for $C_{\text{DI}}$?
\item Does the analogue of Proposition~\ref{prop:13} hold for $C_{\text{PI}}$?
\item If $\asy(\mc A_n)$ has stable rank one, does $\mc A_n$ for large enough $n$?
\item Can one find a counter-example as in Theorem~\ref{thm:6} which uses directly the criteria
established in Proposition~\ref{thm:5}?
\end{itemize}

A wider ``open question'' is to study the constants $C_{\text{DI}}$ and $C_{\text{PI}}$, and
the criteria from Proposition~\ref{thm:5}.  These are ``metric versions'' of being Dedekind-infinite,
being properly infinite, and having stable rank one.  We wonder if there are other properties of
Banach algebras which have interesting ``metric versions''?

\begin{Ack}
The authors are grateful to Yemon Choi (Lancaster), James Gabe (Copenhagen), Saeed Ghasemi (Prague), Tomasz Kania (Prague) and Jared White (London) for many enlightening conversations. The authors are indebted to Niels J. Laustsen (Lancaster) for drawing their attention to the paper \cite{dlr}.  We are extremely grateful to the anonymous referee whose comments much improved the exposition of the paper.
The second-named author acknowledges with thanks the funding received from GA\v{C}R project 19-07129Y; RVO 67985840 (Czech Republic).
\end{Ack}

\Addresses

\end{document}